\def\Z{\mathbb{Z}}
\def\CC {\mathcal{C}}
\def\HH {\mathcal{H}}
\def\SS{\mathcal{S}}
\def\Sp{{\SS}^{+}}
\def\Sm{{\SS}^{-}}
\def\Spm{{\SS}^{\pm}}
\newcommand{\dirac}{\slashed{D}}
\def\dplus{{\dirac}^+}
\def\dminus{{\dirac}^{-}}
\def\Ahat {\widehat{\mathcal{A}}}
\def\delbar{\bar\partial}
\newcommand{\tr}{\operatorname{tr}}
\newtheorem{theorem}{Theorem}[section]
\newtheorem{proposition}[theorem]{Proposition}
\newtheorem{lemma}[theorem]{Lemma}
\newtheorem{corollary}[theorem]{Corollary}
\DeclareSymbolFont{mcyr}{OT1}{wncyr}{m}{n}
\DeclareMathSymbol\Lob{\mathop}{mcyr}{76}
\newcommand{\integers}{{\mathbb Z}}
\newcommand{\realnos}{{\mathbb R}}
\newcommand{\complexnos}{{\mathbb C}}
\newcommand{\quaternions}{{\mathbb H}}
\def\Kappa{{\rm K}}
\DeclareRobustCommand*\uell{\mathpalette\@uell\relax}
\newcommand*\@uell[2]{
  \setbox0=\hbox{$#1\ell$}
  \setbox1=\hbox{\rotatebox{10}{$#1\ell$}}
  \dimen0=\wd0 \advance\dimen0 by -\wd1 \divide\dimen0 by 2
  \mathord{\lower 0.1ex \hbox{\kern\dimen0\unhbox1\kern\dimen0}}
}
\begin{document}

\title{Harmonic spinors on the Davis hyperbolic 4-manifold}

\author{John G. Ratcliffe, Daniel Ruberman and Steven T. Tschantz}

\address{Department of Mathematics, Brandeis University, Waltham, MA 02445}
\address{Department of Mathematics, Vanderbilt University, Nashville, TN 37240
\vspace{.1in}}

\email{j.g.ratcliffe@vanderbilt.edu}
\email{ruberman@brandeis.edu}
\email{steven.tschantz@vanderbilt.edu}

\subjclass{20H10, 22E40, 30F40, 53C27, 57M50, 58J05, 58J20}

\date{}

\keywords{complex spin representation, Davis 4-manifold, Dirac operator, harmonic spinor, 
hyperbolic manifold, $G$-spin theorem, spin manifold, spinor bundle, spinor-index}

\begin{abstract}
In this paper we use the $G$-spin theorem to show that the Davis hyperbolic 4-manifold admits harmonic spinors. 
This is the first example of a closed hyperbolic $4$-manifold that admits harmonic spinors.
We also explicitly describe the Spinor bundle of a spin hyperbolic 2- or 4-manifold and 
show how to calculated the subtle sign terms in the $G$-spin theorem for an isometry, with isolated fixed points, of a closed spin hyperbolic 2- or 4-manifold.  
\end{abstract}

\maketitle

\section{Introduction}\label{S:intro} 
The Dirac operator $\dirac$ acting on sections of the spinor bundle $\SS$ 
of a closed spin Riemannian manifold $M$ is one of the fundamental elliptic operators of Riemannian geometry.  
The operator is self-adjoint, and in even dimensions the spinors split as $\SS = \Sp \oplus \Sm$, with 
$\dirac$ interchanging sections of $\Spm$. 
The elements of the kernel $\HH$ of $\dirac$ are call {\it harmonic spinors}, and 
$\HH = \HH^+\oplus\HH^-$ 
where $\HH^\pm$ are, respectively the kernel of the 
{\em chiral} Dirac operator $\dplus : \CC^\infty(\Sp) \to \CC^\infty(\Sm)$ and its adjoint $\dminus$.

The {\it spinor-index} of $M$ is the index of $\dplus$ which is defined to be 
\begin{equation}\label{E:index}  
{\rm Spin}(M) = \dim \HH^+ - \dim\HH^-.
\end{equation}
The spinor-index was determined by Atiyah and Singer~\cite{atiyah-singer:I} to be the $\Ahat$-genus, that is, the integral of a polynomial in the Pontrjagin classes of $M$. Lichnerowicz~\cite{lichnerowicz:spinors} made the seminal observation that for manifolds with positive scalar curvature, both $\HH^+$ and $\HH^-$ are $\{0\}$, and hence the topologically invariant $\Ahat$-genus vanishes.  The method extends to show the vanishing of certain spin bordism invariants ~\cite{hitchin:spinors}. The use of such index-theoretic methods to give topological restrictions on manifolds admitting a metric of positive scalar curvature has been extensive; see the survey~\cite{rosenberg:psc-progress}.

In this paper, we address a converse to the Lichnerowicz result, looking for non-zero harmonic spinors 
on manifolds with negative curvature, in particular for hyperbolic manifolds. 
Note that by a result of Chern~\cite{chern:curvature} the Pontrjagin classes of a hyperbolic manifold vanish, and so the spinor-index is $0$.  Hence the index theorem cannot be used directly to prove the existence of non-zero harmonic spinors. Moreover, for a generic metric on a manifold of dimension at least $3$, the kernel of the Dirac operator is as small as required by the index theorem~\cite{ammann-dahl-humbert:surgery}, so a metric whose associated Dirac operator has non-trivial kernel must be somewhat special.  Hitchin~\cite{hitchin:spinors} used the interpretation of the Dirac operator as a twisted $\delbar$ operator to show that for certain spin structures and hyperbolic metrics on a Riemann surface of genus $g$, the kernel can have dimension as large as 
$
\left\lfloor (g+1)/2\right\rfloor.
$

The main result of this paper is the existence of closed hyperbolic $4$-manifolds for which the kernel of the Dirac operator is non-trivial.  To our knowledge, these are the first known examples of such hyperbolic manifolds in higher dimensions.  
Our primary example is the Davis hyperbolic 4-manifold ~\cite{davis:hyperbolic}, which was shown to have a spin structure in~\cite{ratcliffe-tschantz:davis}. 
\begin{theorem}\label{T:davis}  
Let $M$ be the Davis closed hyperbolic $4$-manifold. 
Then $M$ has a spin structure such that the kernel $\HH$ 
of the Dirac operator $\dirac$ has (complex) dimension at least $20$.
\end{theorem}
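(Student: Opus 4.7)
The plan is to use the $G$-spin theorem applied to a finite group $G$ of orientation-preserving isometries of $M$ that preserves the chosen spin structure. Because $G$ commutes with $\dirac$, the spaces $\HH^{\pm}$ become finite-dimensional complex $G$-modules, and
\[
\chi_\HH(g) \;=\; \tr(g|\HH^+) - \tr(g|\HH^-)
\]
is a well-defined class function on $G$. Since ${\rm Spin}(M) = 0$ (already forced by Chern's vanishing of the Pontrjagin classes of a hyperbolic manifold), we have $\chi_\HH(e) = 0$; the whole game is to extract nontriviality of $\HH$ from the values of $\chi_\HH$ at nonidentity elements.

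First I would identify a convenient subgroup $G \le \mathrm{Isom}(M)$ of spin-preserving isometries, ideally one with a rich supply of isometries whose fixed sets are finite, and record its irreducible characters. Next, for each nontrivial conjugacy class $[g]$ with isolated fixed set in $M$, I would apply the $G$-spin theorem to express $\chi_\HH(g)$ as a finite sum over $M^g$ of local contributions determined by the rotation angles of $dg$ at each fixed point together with a sign coming from the lift $\tilde g \in \Spin(4)$ dictated by the spin structure. Conjugacy classes whose fixed set has positive dimension either vanish by symmetry or require a secondary localization, which I would treat as a side calculation. The explicit model of the spinor bundle of a spin hyperbolic $4$-manifold promised in the abstract is precisely what pins down those signs.

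Once $\chi_\HH$ is tabulated on all conjugacy classes, character orthogonality gives
\[
a^+_\rho - a^-_\rho \;=\; \frac{1}{|G|} \sum_{g \in G} \chi_\HH(g)\,\overline{\chi_\rho(g)}
\]
for every irreducible representation $\rho$ of $G$, where $a^\pm_\rho$ is the multiplicity of $\rho$ in $\HH^{\pm}$. The elementary bound
\[
\dim_\C \HH \;=\; \sum_\rho \dim(\rho)\,(a^+_\rho + a^-_\rho) \;\ge\; \sum_\rho \dim(\rho)\,\bigl|a^+_\rho - a^-_\rho\bigr|
\]
then reduces the theorem to exhibiting enough irreducibles with nonzero $a^+_\rho - a^-_\rho$, weighted by their dimension, to sum to at least $20$.

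The main obstacle is the sign calculation inside the $G$-spin formula. Each fixed-point contribution comes with a factor of $\pm 1$ arising from the lift of $dg_p$ to $\Spin(4)$, and these signs are sensitive to both the spin structure and the hyperbolic geometry: a single wrong sign could collapse a nonzero multiplicity to zero and destroy the lower bound. So the bulk of the work is constructing the explicit spinor bundle over the Davis manifold and reading off the lift $\tilde g \in \Spin(4)$ for each relevant isometry directly from the hyperbolic data; after that, the representation-theoretic extraction of the numerical bound of $20$ should be a mechanical finite calculation.
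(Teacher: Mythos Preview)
Your proposal is essentially the paper's approach: use the $G$-spin theorem to compute the virtual character $\chi_\HH$ on a spin-preserving isometry group, then extract a lower bound on $\dim\HH$ from the decomposition into irreducibles. The paper takes $G$ to be the cyclic group of order $15$ generated by a specific symmetry of the $120$-cell; every nonidentity power has isolated fixed points (so your hedge about positive-dimensional fixed sets never arises), and since $G$ is abelian the virtual character is just a polynomial $p(x)\in\Z[x]/(x^{15}-1)$ whose coefficients are the differences $a_\rho^+-a_\rho^-$. The sum of the absolute values of those coefficients is $20$, which is exactly your inequality $\dim\HH\ge\sum_\rho\dim(\rho)\,|a_\rho^+-a_\rho^-|$ specialized to one-dimensional irreducibles.
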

By passing to finite covers of $M$, we obtain infinitely many examples.
\begin{corollary}\label{C:covers}  
There are closed spin hyperbolic $4$-manifolds of arbitrarily high volume with non-zero harmonic spinors.
\end{corollary}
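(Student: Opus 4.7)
The plan is to deduce Corollary \ref{C:covers} from Theorem \ref{T:davis} by a straightforward pullback argument on finite covers of the Davis manifold $M$.

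First I would observe that the fundamental group $\pi_1(M)$ is a lattice in $\mathrm{SO}^+(4,1)$ and is therefore residually finite (by Selberg's lemma, or by Mal'cev's theorem applied to this linear group). Consequently, $M$ admits finite normal covers $\widetilde{M}_k \to M$ of arbitrarily large degree $k$, and the volume of $\widetilde{M}_k$ equals $k \cdot \mathrm{vol}(M)$, which tends to infinity. So the ``arbitrarily high volume'' requirement is immediate once the harmonic spinor count is handled.

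Next I would verify that the spin structure from Theorem \ref{T:davis} and its harmonic spinors lift to each cover. A covering map $p\colon \widetilde{M}_k \to M$ is a local isometry, so the frame bundle of $\widetilde{M}_k$ is the pullback of the frame bundle of $M$; pulling back the chosen spin structure on $M$ gives a canonical spin structure on $\widetilde{M}_k$, and the spinor bundle $\widetilde{\SS}$ on $\widetilde{M}_k$ is the pullback $p^*\SS$. Because the Dirac operator is built from the Levi-Civita connection and Clifford multiplication, both of which are defined locally, it commutes with pullback: $\widetilde{\dirac}(p^*\psi) = p^*(\dirac\,\psi)$. Hence every harmonic spinor $\psi$ on $M$ gives a harmonic spinor $p^*\psi$ on $\widetilde{M}_k$, and the map $\psi \mapsto p^*\psi$ is injective (since $p$ is surjective). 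Therefore
\[
\dim \HH(\widetilde{M}_k) \;\geq\; \dim \HH(M) \;\geq\; 20,
\]
using Theorem \ref{T:davis} for the last inequality.

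Combining the two points gives infinitely many closed spin hyperbolic $4$-manifolds $\widetilde{M}_k$, of unbounded volume, each carrying a non-zero harmonic spinor. There is essentially no obstacle here beyond citing residual finiteness; the naturality of the Dirac operator under local isometries is routine, and one does not need to compute $\dim \HH(\widetilde{M}_k)$ precisely, only to note that it is bounded below by $\dim \HH(M)$.
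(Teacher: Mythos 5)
Your argument is correct and is exactly the route the paper intends: the paper justifies Corollary~\ref{C:covers} with the single remark that one passes to finite covers of the Davis manifold, and your write-up simply makes explicit the two ingredients (residual finiteness of the lattice to get covers of unbounded degree, hence unbounded volume, and the locality of the Dirac operator so that pulled-back harmonic spinors remain harmonic and non-zero). Nothing is missing.
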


Our method is to use the $G$-spin theorem (the $G$-index theorem for the Dirac operator~\cite{atiyah-singer:III}) 
where $G$ is a group of orientation preserving isometries of a Riemannian spin manifold $M$.  Suppose that the action of $G$ lifts to the spinor bundle, and denote by $\hat{g}$ the lift of an element $g\in G$. Then $G$ acts on $\HH^+$ and $\HH^-$, and 
we get two characters of $G$ whose difference in the representation ring $R(G)$ is the spinor-index ${\rm Spin}(G,M)$ of the action. 
The value of ${\rm Spin}(G,M)$ at an element $g$ of $G$ is the $G$-equivariant index
\begin{equation}\label{E:g-spin}  
{\rm Spin}(\hat g,M) := \tr(\hat{g},\HH^+) - \tr(\hat{g},\HH^-).
\end{equation}
\begin{proposition}\label{P:g-spin} 
Let $M$ be a hyperbolic spin manifold of even dimension, and $G$ a spin action as above.  
If for some $g\in G$ we have ${\rm Spin}(\hat g,M) \neq 0$, then $M$ admits non-zero harmonic spinors.
\end{proposition}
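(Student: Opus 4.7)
The plan is to argue by contrapositive. Suppose $M$ admits no non-zero harmonic spinors, i.e., $\HH = \{0\}$. Since $M$ has even dimension, the spinor bundle splits as $\SS = \Sp \oplus \Sm$ and correspondingly $\HH = \HH^+ \oplus \HH^-$, so both $\HH^+$ and $\HH^-$ must be the zero vector space under this assumption.

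The lifted isometry $\hat g$ restricts to a linear endomorphism of each of the finite-dimensional spaces $\HH^\pm$. Since both of these spaces are $\{0\}$, the only linear map on either of them is the zero map, whose trace is $0$. Thus by the defining equation (\ref{E:g-spin}),
\begin{equation*}
{\rm Spin}(\hat g, M) = \tr(\hat g, \HH^+) - \tr(\hat g, \HH^-) = 0 - 0 = 0
\end{equation*}
for every $g \in G$, which contradicts the hypothesis that ${\rm Spin}(\hat g, M) \neq 0$ for some $g$. Therefore $\HH$ must be non-trivial.

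There is essentially no obstacle in the argument: the result is immediate once one unpacks the definition of the equivariant spinor-index as a difference of traces on the harmonic-spinor spaces, together with the chiral splitting available in even dimensions. The hyperbolicity assumption plays no role in the proof itself; it is present because the interest of the proposition is methodological. It converts the existence problem for harmonic spinors, which on a hyperbolic manifold is not accessible by the ordinary index theorem (the $\Ahat$-genus vanishes by Chern's theorem), into the computation of a single equivariant index for a well-chosen isometry $g$. The real work, which is deferred to the body of the paper, is to carry out that computation via the $G$-spin theorem, i.e., to evaluate the fixed-point contributions for an isometry of the Davis 4-manifold and verify that their sum is non-zero.
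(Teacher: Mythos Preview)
Your proof is correct and, for the proposition as stated, even slightly more direct than the paper's. The paper argues: if $\HH^+=\{0\}$ then, since the ordinary spinor-index ${\rm Spin}(M)=\dim\HH^+-\dim\HH^-$ vanishes for a hyperbolic manifold (Chern's theorem on Pontrjagin classes forces $\Ahat(M)=0$), also $\HH^-=\{0\}$; hence both traces in \eqref{E:g-spin} vanish. You instead assume $\HH=\{0\}$ outright, which immediately gives $\HH^\pm=\{0\}$ without invoking the index. The paper's route therefore genuinely uses the hyperbolicity hypothesis and yields the marginally sharper conclusion $\HH^+\neq\{0\}$, which is what is actually exploited later (e.g.\ in the dimension estimates of \S10); your route proves exactly the stated conclusion $\HH\neq\{0\}$ and, as you observe, works for any closed spin manifold of even dimension.
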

\begin{proof}
If $\HH^+ =\{0\}$, then $\HH^- =\{0\}$ as well, by the vanishing of the spinor-index ${\rm Spin}(M)$. 
Hence if ${\rm Spin}(\hat g,M) \neq 0$, then $\HH^+$ must have positive dimension.
\end{proof}
To apply Proposition~\ref{P:g-spin} to the Davis manifold $M$, 
we consider a particular spin structure and cyclic group $G$ generated by an isometry $g$ of order $15$ 
with a lift $\hat{g}$ to the spinor bundle of order 15.  
By understanding the fixed point behavior of $g$ and its powers, we use the $G$-spin theorem 
to determine ${\rm Spin}(G, M)$ in the representation ring $R(G)$. 
This gives the lower bound of $20$ for $\dim \HH$ stated in Theorem~\ref{T:davis}.

The calculation of the local contributions at fixed points to the equivariant index in \eqref{E:g-spin} involves a careful determination of the lift of the $G$-action 
on the manifold to its spinor bundle. 
This is carried out by interpreting spin structures on a hyperbolic manifold in representation-theoretic terms, and is presented in some generality in this paper.
In principle, this same method will work to prove the existence of non-zero harmonic spinors 
on hyperbolic manifolds in other even dimensions. 
We give some examples to show that our method works for hyperbolic 2-manifolds. 

Theorem~\ref{T:davis} also implies that the generic vanishing theorem for $\HH$ in~\cite{ammann-dahl-humbert:surgery} does not have a direct equivariant analogue. 
\begin{corollary}\label{C:equiv}  
There is an open set in the space of $\Z_{15}$-invariant Riemannian metrics on the Davis hyperbolic 4-manifold for which the kernel of the Dirac operator is non-trivial. 
\end{corollary}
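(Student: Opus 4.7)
The plan is to invoke the equivariant deformation invariance of the spinor index. The key point is that the $\Z_{15}$-equivariant index ${\rm Spin}(\Z_{15},M) \in R(\Z_{15})$ is a topological invariant of the equivariant spin structure on $M$, independent of the choice of $\Z_{15}$-invariant Riemannian metric; this is a direct consequence of the Atiyah--Singer $G$-index theorem, or equivalently of the homotopy invariance of the equivariant Fredholm index. Theorem~\ref{T:davis} (in fact, its proof via the $G$-spin theorem) establishes the non-vanishing of this invariant at the hyperbolic metric, and the corollary follows by spreading this non-vanishing to a neighborhood in the space of $\Z_{15}$-invariant metrics.

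More precisely, let $\MM_{\Z_{15}}$ denote the space of smooth $\Z_{15}$-invariant Riemannian metrics on $M$; it is convex and contains the hyperbolic metric $g_0$. For any $g \in \MM_{\Z_{15}}$, the linear path $g_t = (1-t)g_0 + tg$ lies in $\MM_{\Z_{15}}$. Using the Bourguignon--Gauduchon identification of spinor bundles along a path of metrics, which is $\Z_{15}$-equivariant whenever all metrics in the path are $\Z_{15}$-invariant, the Dirac operators $\dirac_{g_t}$ form a continuous family of Fredholm operators commuting with the chosen lift $\hat g$ of the action to the spinor bundle. Since $R(\Z_{15})$ is discrete, the equivariant index is constant in $t$. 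By the proof of Theorem~\ref{T:davis}, at $t=0$ we have ${\rm Spin}(\hat g, M) \neq 0$ for some $g \in \Z_{15}$, so the same holds at every $t$. From the defining equality \eqref{E:g-spin}, a non-zero value of ${\rm Spin}(\hat g, M)$ forces $\HH^+\oplus\HH^- \neq \{0\}$, so $\ker \dirac_{g_t} \neq 0$ for all $t$. Taking any open neighborhood of $g_0$ in $\MM_{\Z_{15}}$ yields the desired open set.

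The main obstacle is verifying that the $\Z_{15}$-action lifts consistently to the varying spinor bundles along the path $g_t$, so that the family of Dirac operators is genuinely $\Z_{15}$-equivariant. This is handled by noting that the $\Z_{15}$-equivariant spin structure is discrete topological data (a reduction of structure group on the equivariant frame bundle) and that the Bourguignon--Gauduchon identification is natural with respect to isometries: since each $g_t$ is $\Z_{15}$-invariant, every element of $\Z_{15}$ intertwines the identifications between the different metrics, and hence the lift $\hat g$ transports to an isometric lift of the action on $\Sp_{g_t} \oplus \Sm_{g_t}$ for every $t$. Once this consistency is in place, the deformation invariance argument proceeds directly and gives the stated openness.
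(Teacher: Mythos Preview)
Your proposal is correct and follows the approach implicit in the paper, which states the corollary without proof as an immediate consequence of the $G$-spin computation: the equivariant index ${\rm Spin}(\hat g,M)$ is a deformation invariant of the $\Z_{15}$-equivariant spin structure, hence nonzero for every $\Z_{15}$-invariant metric, and by \eqref{E:g-spin} this forces $\HH^+\oplus\HH^-\neq\{0\}$. Note that your argument actually yields the stronger conclusion that the kernel is nontrivial for \emph{every} $\Z_{15}$-invariant metric, not merely on an open set; the paper's phrasing in terms of an open set is presumably chosen to highlight the contrast with the generic vanishing theorem of Ammann--Dahl--Humbert.
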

A more reasonable equivariant extension of the result of~\cite{ammann-dahl-humbert:surgery} might be that for a spin manifold with a smooth action of a finite group $G$, a generic $G$-invariant metric has kernel and cokernel as small as allowed by the $G$-spin theorem.

\subsection*{Outline}
Our paper is organized as follows:  In \S 2, we give an algebraic characterization of a spin structure on a hyperbolic $n$-manifold. 
In \S 3, we define the spin group ${\rm Spin}^+(n,1)$ in terms of Clifford algebras. 
In \S 4, we show that the complex spin representation $\Delta_{2m}: {\rm Spin}(2m) \to \complexnos(2^{m})$ 
extends to a representation $\Delta_{2m,1}: {\rm Spin}^+(2m,1) \to \complexnos(2^{m})$. 
In \S 5, we describe $\Delta_{2,1}$ in terms of the group ${\rm SU}(1,1;\complexnos)$. 
In \S 6, we describe $\Delta_{4,1}$ in terms of the group ${\rm SU}(1,1;\mathbb{H})$. 
In \S 7, we give a new formulation of the complex spinor bundle of a hyperbolic spin $2m$-manifold.  
In \S 8, we give a refined formulation of the $G$-spin theorem for an isometry of a hyperbolic spin manifold 
with only isolated fixed points. 
In \S 9, we use the $G$-spin theorem to prove the existence of non-zero harmonic spinors 
on two hyperbolic surfaces. 
In \S 10, we use the $G$-spin theorem to prove the existence of non-zero harmonic spinors 
on the Davis hyperbolic 4-manifold.

\subsection*{Acknowledgments}
The question of finding a closed spin hyperbolic manifold of dimension greater than $2$ whose Dirac operator is non-invertible was posed by the second author on Mathoverflow~\cite{MO-spinors}. Thanks to those who submitted comments and suggestions, and to Francesco Lin for some early correspondence on the subject.

The authors thank AIM for providing a congenial environment where our two SQuaREs were able to interact, leading to the current work.  
DR was partially supported by NSF grant DMS-1506328.

\section{Preliminaries}\label{S:prelim} 

\subsection{Hyperbolic $n$-space}
Let $f_n$ be the {\it Lorentzian quadratic form} in $n+1$ variables $x_1, \ldots, x_{n+1}$ given by
$$f_n(x) = x_1^2+ \cdots + x_n^2 - x_{n+1}^2.$$
The hyperboloid model of {\it hyperbolic $n$-space} is 
$$H^n = \{x \in \realnos^{n+1} : f_n(x) = -1 \ \ \hbox{and} \ \ x_{n+1} > 0\}.$$
The {\it orthogonal group} of the quadratic form $f_n$ is defined to be 
$$\mathrm{O}(n,1) = \{A \in \mathrm{GL}(n+1,\realnos) : f_n(Ax) = f_n(x) \ \hbox{for all} \ x \in \realnos^{n+1}\}.$$
Let $\mathrm{O}^+(n,1)$ be the subgroup of $\mathrm{O}(n,1)$ consisting of all $A\in \mathrm{O}(n,1)$ that leave $H^n$ invariant. 
Then $\mathrm{O}^+(n,1)$ has index 2 in $\mathrm{O}(n,1)$. 
Restriction induces an isomorphism from $\mathrm{O}^+(n,1)$ to the group $\mathrm{Isom}(H^n)$ 
of isometries of $H^n$.  
We will identify $\mathrm{O}^+(n,1)$ with $\mathrm{Isom}(H^n)$. 
Let $\mathrm{SO}^+(n,1)$ be the subgroup of $\mathrm{O}^+(n,1)$ of matrices of determinant 1. 
Under the identification of $\mathrm{O}^+(n,1)$ with ${\rm Isom}(H^n)$, the group 
$\mathrm{SO}^+(n,1)$ corresponds to the group of orientation preserving isometries of $H^n$. 

\subsection{Hyperbolic $n$-manifold}
A {\it hyperbolic $n$-manifold} is a complete Riemannian $n$-manifold of constant sectional curvature $-1$. 
As a reference for hyperbolic manifolds, see \cite{R}. 
An $n$-dimensional {\it hyperbolic space-form} is the orbit space $\Gamma\backslash H^n$ of a torsion-free discrete subgroup 
$\Gamma$ of $\mathrm{O}^+(n,1)$.  A hyperbolic space-form  $\Gamma\backslash H^n$ is a hyperbolic $n$-manifold, 
and every hyperbolic $n$-manifold is isometric to a hyperbolic space form  $\Gamma\backslash H^n$. 
The manifold  $\Gamma\backslash H^n$ is orientable if and only if $\Gamma$ is a subgroup of 
$\mathrm{SO}^+(n,1)$. 

Consider the {\it Lorentzian inner product} on $\realnos^{n+1}$ given by 
$$x \circ y = x_1y_1 + \cdots + x_ny_n - x_{n+1}y_{n+1}.$$
We denote $\realnos^{n+1}$ with this inner product by $\realnos^{n,1}$. 
The {\it tangent space} of $H^n$ at a point $x$ of $H^n$ is 
$${\rm T}_x(H^n) = \{y \in \realnos^{n,1}: x \circ y = 0\}.$$
Now ${\rm T}_x(H^n)$ is a $n$-dimensional space-like vector subspace of $\realnos^{n,1}$ for each $x$ in $H^n$, 
and so the Lorentzian inner product on $\realnos^{n,1}$ restricts to a positive definite inner product on ${\rm T}_x(H^n)$.  

The {\it tangent bundle} of $H^n$ is the set 
$${\rm T}(H^n) = \{(x,v) \in H^n \times \realnos^{n,1}: v \in {\rm T}_x(H^n)\}$$
with the subspace topology from $H^n \times \realnos^{n+1}$. 

Let $\Gamma\backslash H^n$ be a hyperbolic space-form. 
Then $\Gamma$ acts diagonally on ${\rm T}(H^n)$; moreover, $\Gamma$ acts freely and discontinuously on ${\rm T}(H^n)$. 
The {\it tangent bundle} ${\rm T}(\Gamma\backslash H^n)$ of $\Gamma\backslash H^n$ is the orbit space 
$\Gamma\backslash {\rm T}(H^n)$. 

\subsection{Orthonormal frame bundle}
The {\it oriented orthonormal frame bundle} of $H^n$ is the set ${\rm F}(H^n)$ of all ordered $(n+1)$-tuples  
$(v_1,\ldots,v_n,x)$  in $(\realnos^{n,1})^{n+1}$, with the subspace topology, such that $x \in H^n$,  
and $\{v_1,\ldots,v_n\}$ is an orthonormal basis for ${\rm T}_x(H^n)$, and ${v_1,\ldots, v_n, x}$ is 
a positively oriented basis of $\realnos^{n+1}$. 
We have the projection map $\pi: {\rm F}(H^n) \to H^n$ defined by $\pi(v_1,\ldots,v_n,x) = x$. 

Define $\xi: {\rm F}(H^n) \to {\rm SO}^+(n,1)$ by $\xi(v_1,\ldots,v_n,x) = A$ where $A$ is the matrix 
whose columns vectors are $v_1, \ldots, v_n, x$. 
Then $\xi$ is a diffeomorphism. 
Let $e_1, \ldots, e_{n+1}$ be the standard basis of $\realnos^{n+1}$. 
Then 
$${\rm F}(H^n) = \{(Ae_1,\ldots, Ae_{n+1}): A \in {\rm SO}^+(n,1)\}.$$
Let $\epsilon: {\rm SO}^+(n,1) \to H^n$ be the evaluation map at $e_{n+1}$. 
Then $\epsilon \xi = \pi$. 

Now ${\rm SO}^+(n,1)$ is a principal ${\rm SO}(n)$-bundle over $H^n$ with projection map $\epsilon$ and $B \in {\rm SO}(n)$ 
acting freely on the right of ${\rm SO}^+(n,1)$ by right multiplication by $\hat B$ where $\hat B \in {\rm SO}^+(n,1)$ 
is the block diagonal matrix with blocks $B$ and $(1)$. 
Moreover ${\rm SO}^+(n,1)$ is a trivial principal ${\rm SO}(n)$-bundle over $H^n$. 
The group ${\rm SO}(n)$ acts freely on the right of $F(H^n)$ by 
$$(Ae_1,\ldots, Ae_{n+1})B = (A\hat Be_1, \ldots, A\hat B e_{n+1})$$
making ${\rm F}(H^n)$ into a principal ${\rm SO}(n)$-bundle over $H^n$ equivalent to  ${\rm SO}^+(n,1)$ via 
the diffeomorphism $\xi$. 

Let $\Gamma\backslash H^n$ be an orientable hyperbolic space-form. 
Then $\Gamma$ acts diagonally on the left of ${\rm F}(H^n)$; 
moreover, $\Gamma$ acts freely and discontinuously on ${\rm F}(H^n)$. 
The {\it orthonormal frame bundle} ${\rm F}(\Gamma\backslash H^n)$ of $\Gamma\backslash H^n$ is the orbit space 
$\Gamma\backslash {\rm F}(H^n)$. 
The left action of $\Gamma$ on ${\rm F}(H^n)$ corresponds to the left action of $\Gamma$ on ${\rm SO}^+(n,1)$ 
by group multiplication. 
We will identify ${\rm F}(H^n)$ with ${\rm SO}^+(n,1)$ 
and ${\rm F}(\Gamma\backslash H^n)$ with $\Gamma\backslash {\rm SO}^+(n,1)$. 
We have that $\Gamma\backslash {\rm SO}^+(n,1)$ is a principal ${\rm SO}(n)$-bundle over $\Gamma\backslash H^n$ 
with right action of ${\rm SO}(n)$ induced by the right action of ${\rm SO}(n)$ on ${\rm SO}^+(n,1)$
and bundle map $\varepsilon: \Gamma\backslash {\rm SO}^+(n,1) \to \Gamma\backslash H^n$ 
defined by $\varepsilon(\Gamma A) = \Gamma \epsilon(A)$. 

It is standard that ${\rm SO}(2)$ is homeomorphic to $S^1$, and so $\pi_1({\rm SO}(2)) \cong \integers$. 
If $n > 2$, then $\pi_1({\rm SO}(n)) \cong \integers/2\integers$, and so $\pi_1({\rm SO}(n))$ has a unique subgroup of index 2 for each $n \geq 2$. 
Hence ${\rm SO}(n)$ has a connected double covering space which is unique up to covering space equivalence. 
We will give a formal definition of ${\rm Spin}(n)$ later in the paper, but for now 
${\rm Spin}(n)$ is a connected double covering space of ${\rm SO}(n)$ for each $n \geq 2$. 
As ${\rm SO}(n)$ is a Lie group, ${\rm Spin}(n)$ is a Lie group
such that the covering projection $\sigma: {\rm Spin}(n) \to {\rm SO}(n)$ is a group homomorphism 
by Theorem 6.11 of \cite{G-H}. 
We denote the nonidentity element of the kernel of $\sigma$ by $-1$. 

\subsection{Spin structure}
Let $\Gamma\backslash H^n$ be an orientable hyperbolic space-form. 
A {\it spin structure} on $\Gamma\backslash H^n$ is 
a double covering projection $\rho: P \to \Gamma\backslash {\rm SO}^+(n,1)$ such that $P$ is a principal ${\rm Spin}(n)$-bundle $P$ over $\Gamma\backslash H^n$,  
with bundle projection $\varepsilon\rho: P \to \Gamma\backslash H^n$,  
and such that if $x$ is in $P$ and $s$ is in ${\rm Spin}(n)$, then $\rho(x s) = \rho(x) \sigma(s)$. 
This last condition implies that $\rho$ double projects each fiber of $\varepsilon\rho: P \to \Gamma\backslash H^n$ 
onto a fiber of $\varepsilon: \Gamma\backslash{\rm SO}^+(n,1) \to \Gamma\backslash H^n$. 

Two spin structures $\rho: P \to \Gamma\backslash {\rm SO}^+(n,1)$ and $\rho': P' \to  \Gamma\backslash {\rm SO}^+(n,1)$ on $\Gamma\backslash H^n$ 
are said to be {\it equivalent} if there is a diffeomorphism $\xi: P \to P'$ such that $\rho' \xi = \rho$ 
and if $x$ is in $P$ and $s$ is in ${\rm Spin}(n)$, then $\xi(x s) = \xi(x) s$. 

The mapping $B \mapsto \hat B$ embeds ${\rm SO}(n)$ isomorphically onto 
a subgroup $\widehat{\rm SO}(n)$ of ${\rm SO}^+(n,1)$ 
which is the fiber of the ${\rm SO}(n)$-bundle projection $\epsilon:  {\rm SO}^+(n,1) \to H^n$ over the point $e_{n+1}$. 
The embedding, $B \mapsto \hat B$, of ${\rm SO}(n)$ into ${\rm SO}^+(n,1)$ is a homotopy equivalence, since $H^n$ is contractible. 
Therefore $\pi_1({\rm SO}^+(2,1)) \cong \integers$ and $\pi_1({\rm SO}^+(n,1)) \cong \integers/2\integers$ for all $n > 2$.  
Hence $\pi_1({\rm SO}^+(n,1))$ has a unique subgroup of index 2 for each $n \geq 2$. 
Therefore ${\rm SO}^+(n,1)$ has a connected double covering space which is unique up to covering space equivalence. 
We will give a formal definition of ${\rm Spin}^+(n,1)$ later in the paper, but for now 
${\rm Spin}^+(n,1)$ is a connected double covering space of ${\rm SO}^+(n,1)$ for each $n \geq 2$. 
As ${\rm SO}^+(n,1)$ is a Lie group, ${\rm Spin}^+(n,1)$ is a Lie group
such that the covering projection $\eta: {\rm Spin}^+(n,1) \to {\rm SO}^+(n,1)$ is a group homomorphism 
by Theorem 6.11 of \cite{G-H}. 
We denote the nonidentity element of the kernel of $\eta$ by $-1$.

The embedding,  $B \mapsto \hat B$, of ${\rm SO}(n)$ into ${\rm SO}^+(n,1)$ lifts to an isomorphic embedding of ${\rm Spin}(n)$ onto the  
subgroup $\widehat{\rm Spin}(n) = \eta^{-1}(\widehat{\rm SO}(n))$ of ${\rm Spin}^+(n,1)$.

The Lie group ${\rm Spin}^+(n,1)$ is a principal ${\rm Spin}(n)$-bundle over $H^n$, 
with bundle projection $\epsilon\eta: {\rm Spin}^+(n,1) \to H^n$ and 
right action of ${\rm Spin}(n)$ on ${\rm Spin}^+(n,1)$ corresponding to right multiplication by $\widehat{\rm Spin}(n)$. 
Moreover if $g$ is in ${\rm Spin}^+(n,1)$ and $s$ is in ${\rm Spin}(n)$, then $\eta(gs) = \eta(g)\sigma(s)$. 
Therefore the double covering projection $\eta: {\rm Spin}^+(n,1) \to {\rm SO}^+(n,1)$ 
is a spin structure on $H^n$. 
Note that ${\rm Spin}^+(n,1)$ is a trivial principal ${\rm Spin}(n)$-bundle over $H^n$, 
since ${\rm SO}^+(n,1)$ is a trivial principal ${\rm SO}(n)$-bundle over $H^n$.

The next theorem is known to experts.  We could not find a proof in the literature, and so we give a proof which relies only on covering space theory. 

\begin{theorem}\label{T:spin-hyperbolic}  
Let $\Gamma$ be a torsion-free discrete subgroup of ${\rm SO}^+(n,1)$, 
and let $\eta: {\rm Spin}^+(n,1) \to {\rm SO}^+(n,1)$ be the double covering epimorphism.  
Then the set of equivalence classes of spin structures on the hyperbolic space-form $\Gamma\backslash H^n$ is  
in one-to-one correspondence with the set of subgroups $\hat \Gamma$ of ${\rm Spin}^+(n,1)$ 
such that $\eta$ maps $\hat \Gamma$ isomorphically onto $\Gamma$. 

If $\hat\Gamma$ is a subgroup of ${\rm Spin}^+(n,1)$ such that $\eta$ maps $\hat \Gamma$ isomorphically onto $\Gamma$, 
then $\hat\Gamma$ corresponds to the equivalence class of the spin structure $\rho: \hat\Gamma\backslash {\rm Spin}^+(n,1) \to \Gamma\backslash {\rm SO}^+(n,1)$ 
induced by $\eta$. 
\end{theorem}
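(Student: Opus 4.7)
The plan is to establish the bijection by explicit construction in both directions, using covering-space theory with the regular cover $\pi\eta\colon{\rm Spin}^+(n,1)\to\Gamma\backslash{\rm SO}^+(n,1)$, whose deck group is $\tilde{\Gamma}:=\eta^{-1}(\Gamma)$, as the main tool. Note that $\tilde{\Gamma}$ fits in a central extension $1\to\{\pm 1\}\to\tilde{\Gamma}\to\Gamma\to 1$.

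\emph{Forward direction.} Given a subgroup $\hat{\Gamma}\subseteq{\rm Spin}^+(n,1)$ such that $\eta$ restricts to an isomorphism $\hat{\Gamma}\to\Gamma$, I would form $P_{\hat{\Gamma}}:=\hat{\Gamma}\backslash{\rm Spin}^+(n,1)$ and define $\rho\colon P_{\hat{\Gamma}}\to\Gamma\backslash{\rm SO}^+(n,1)$ by $\hat{\Gamma}g\mapsto\Gamma\eta(g)$. The condition $-1\notin\hat{\Gamma}$, automatic from injectivity of $\eta|_{\hat{\Gamma}}$, implies that each fiber $\eta^{-1}(\Gamma h)$ splits into exactly two $\hat{\Gamma}$-orbits, so $\rho$ is a double cover. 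The right ${\rm Spin}(n)$-action on ${\rm Spin}^+(n,1)$ via $\widehat{\rm Spin}(n)$ descends to a free right ${\rm Spin}(n)$-action on $P_{\hat{\Gamma}}$, giving a principal ${\rm Spin}(n)$-bundle over $\Gamma\backslash H^n$, and the identity $\eta(gs)=\eta(g)\sigma(s)$ recorded in the excerpt yields the spin compatibility $\rho(xs)=\rho(x)\sigma(s)$.

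\emph{Reverse direction.} Given a spin structure $\rho\colon P\to\Gamma\backslash{\rm SO}^+(n,1)$, I would pull $P$ back along $\pi\colon{\rm SO}^+(n,1)\to\Gamma\backslash{\rm SO}^+(n,1)$ to obtain a double cover $\tilde{P}\to{\rm SO}^+(n,1)$. Over the fiber $\widehat{\rm SO}(n)\subset{\rm SO}^+(n,1)$, which maps homeomorphically into $\Gamma\backslash{\rm SO}^+(n,1)$ by freeness of the $\Gamma$-action, the spin compatibility identifies $\tilde{P}|_{\widehat{\rm SO}(n)}$ with the nontrivial double cover $\widehat{\rm Spin}(n)\to\widehat{\rm SO}(n)$. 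Since $\widehat{\rm SO}(n)\hookrightarrow{\rm SO}^+(n,1)$ is a $\pi_1$-isomorphism by contractibility of $H^n$ (noted in the excerpt), $\tilde{P}$ is itself a nontrivial connected double cover, hence isomorphic to ${\rm Spin}^+(n,1)$ by uniqueness of such a cover. This produces a covering map ${\rm Spin}^+(n,1)\to P$ over $\pi$ whose deck group $\hat{\Gamma}$ is an index-$2$ subgroup of $\tilde{\Gamma}$. If $-1\in\hat{\Gamma}$, then the induced cover over each fiber of the ${\rm SO}(n)$-bundle would be trivial, contradicting spin compatibility; hence $\eta|_{\hat{\Gamma}}$ is an isomorphism onto $\Gamma$.

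\emph{Equivalence classes and the main obstacle.} To see the correspondence descends to equivalence classes, I would observe that an equivalence $\xi\colon P_{\hat{\Gamma}_1}\to P_{\hat{\Gamma}_2}$ of spin structures is in particular an isomorphism of double covers of $\Gamma\backslash{\rm SO}^+(n,1)$; the covering-space correspondence then identifies the two index-$2$ subgroups of $\tilde{\Gamma}$, and since both are normal conjugation plays no role, forcing $\hat{\Gamma}_1=\hat{\Gamma}_2$. I expect the subtlest point to be the extraction of the lift ${\rm Spin}^+(n,1)\to P$ in the reverse direction: the fiberwise verification that $\tilde{P}$ is nontrivial and the dictionary between spin compatibility and the combinatorial condition $-1\notin\hat{\Gamma}$ both require attention. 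The case $n=2$ deserves a brief note since ${\rm Spin}^+(2,1)$ is not simply connected, but the argument only invokes uniqueness of the connected double cover of ${\rm SO}^+(n,1)$, which holds for all $n\geq 2$ by the $\pi_1$-analysis already given in the excerpt.
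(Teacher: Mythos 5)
Your overall strategy is sound and close to the paper's: the forward direction (forming $\hat\Gamma\backslash{\rm Spin}^+(n,1)$ and checking the compatibility $\rho(xs)=\rho(x)\sigma(s)$) is the same, and your reverse direction, which pulls the spin structure back along $\pi\colon{\rm SO}^+(n,1)\to\Gamma\backslash{\rm SO}^+(n,1)$ and identifies the pullback with ${\rm Spin}^+(n,1)$ by restricting to the fiber $\widehat{\rm SO}(n)$ and using the $\pi_1$-isomorphism, is a clean variant of the paper's argument, which instead passes to the universal cover (needed only to organize the $n=2$ case) and quotes Massey's classification of double covers and of their restrictions to the fiber. Both routes are covering-space theory, and yours correctly reduces the condition on the cover to $-1\notin\hat\Gamma$.

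There is, however, a genuine gap at the point where the correspondence is supposed to be with \emph{equivalence classes of spin structures}. Your reverse direction produces an identification $P\cong\hat\Gamma\backslash{\rm Spin}^+(n,1)$ only as double covering spaces of $\Gamma\backslash{\rm SO}^+(n,1)$, and your equivalence paragraph only proves the implication ``spin-equivalent $\Rightarrow$ same subgroup.'' What is never shown is the converse step needed for the bijection: that a covering-space equivalence $\xi$ between two spin structures (in particular, between $P$ and the model $\rho_{\hat\Gamma}$) is, or can be replaced by, an equivalence of \emph{spin structures}, i.e.\ satisfies $\xi(xs)=\xi(x)s$ for all $s\in{\rm Spin}(n)$. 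A priori $\xi$ need not respect the principal ${\rm Spin}(n)$-actions, since your identification of $\tilde P$ with ${\rm Spin}^+(n,1)$ used only the covering structure; without this step you have only matched subgroups with covering-equivalence classes, not with spin-structure classes. The paper closes exactly this hole with a separate argument: from $\rho'(\xi(xs))=\rho(x)\sigma(s)=\rho'(\xi(x)s)$ one gets $\xi(xs)=\xi(x)(\pm s)$, and then $s\mapsto\xi(xs)s^{-1}$ is a continuous map from the connected group ${\rm Spin}(n)$ into the two-point set $\{\xi(x),\xi(x)(-1)\}$, hence constant, forcing the sign $+$. You should add this (or an equivalent) equivariance argument. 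A smaller omission of the same kind: in the forward direction you assert that the right ${\rm Spin}(n)$-action descends to a \emph{free} action on $\hat\Gamma\backslash{\rm Spin}^+(n,1)$; freeness is not formal, and requires combining $-1\notin\hat\Gamma$ with the fact that $\Gamma$ is torsion-free and hence acts freely on $H^n$ (if $(\hat\Gamma g)s=\hat\Gamma g$ then $g\hat s g^{-1}\in\hat\Gamma$ and $\eta(g\hat s g^{-1})$ fixes $\eta(g)e_{n+1}$, forcing $\hat s=\pm1$ and then $\hat s=1$); likewise transitivity of ${\rm Spin}(n)$ on the fibers over $\Gamma\backslash H^n$ deserves the short computation the paper gives.
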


\begin{proof}
Let $\rho: P \to \Gamma\backslash {\rm SO}^+(n,1)$ be a spin structure on $\Gamma\backslash H^n$. 
Then $\rho$ is a double covering projection.  Let $x_0$ be a point of $P$ such that $\rho(x_0) = \Gamma$. 
Then $\rho_\ast\pi_1(P,x_0)$ is a subgroup of $\pi_1(\Gamma\backslash {\rm SO}^+(n,1),\Gamma)$ of index 2. 
Subgroups of index 2 are normal, and so $\rho_\ast\pi_1(P,x_0)$ is a normal subgroup of $\pi_1(\Gamma\backslash {\rm SO}^+(n,1),\Gamma)$. 

Let $\rho': P' \to \Gamma\backslash {\rm SO}^+(n,1)$ be a spin structure on $\Gamma\backslash H^n$ 
such that $\rho$ is equivalent to $\rho'$. 
Then there exists a diffeomorphism $\xi: P \to P'$ such that $\rho'\xi = \rho$. 
Hence $P$ and $P'$ are equivalent covering spaces of $\Gamma\backslash {\rm SO}^+(n,1)$. 
Let $x_0'$ be a point of $P'$ such that $\rho'(x_0') = \Gamma$. 
Then $\rho_\ast'\pi_1(P',x_0') = \rho_\ast\pi_1(P,x_0)$ by Theorem V.6.6 of \cite{M}. 
Hence the equivalence class of the spin structure $\rho$ determines the subgroup $\rho_\ast\pi_1(P,x_0)$ 
of $\pi_1(\Gamma\backslash {\rm SO}^+(n,1),\Gamma)$ of index 2.

Let $G = {\rm Spin}^+(n,1)$. 
If $n = 2$, let $\tilde G$ be a universal covering space of $G$, 
and let $\kappa: \tilde G \to G$ be a covering projection. 
Then $\tilde G$ has a Lie group structure such that $\kappa: \tilde G \to G$ is a group homomorphism 
by Theorem 6.11 of \cite{G-H}.  
If $n > 2$, then $G$ is simply connected, and we let $\tilde G = G$ and $\kappa: \tilde G \to G$ be the identity map. 
Then we have the following commutative diagram whose rows are exact sequences  
$$
\begin{array}{ccccccccc}
1 &\to &\Kappa &\to & \tilde G &{\buildrel \kappa \over \longrightarrow} & G & \to & 1 \\
   &     &\downarrow & & \mid\mid   &   &  \downarrow  \eta& & \\
1 &\to &\Lambda &\to & \tilde G & \longrightarrow & {\rm SO}^+(n,1) & \to & 1. 
\end{array}$$
The group $\Lambda = {\rm Ker}(\eta\kappa)$ acts freely on $\tilde G$ by left multiplication as the group 
of covering transformations of the universal covering projection $\eta\kappa: \tilde G \to {\rm SO}^+(n,1)$. 
Therefore $\Lambda$ is isomorphic to the fundamental group of ${\rm SO}^+(n,1)$. 
Hence $\Lambda$ is infinite cyclic if $n = 2$ and $\Lambda =\{\pm 1\}$ if $n > 2$.  
As $\Lambda =\kappa^{-1}(\{\pm 1\})$, we have that $\Kappa$ 
is a subgroup of $\Lambda$ of index 2. Moreover $\Kappa = \{1\}$ if $n > 2$. 

Let $\tilde\Gamma = \eta^{-1}(\Gamma)$.  
Then $\{\pm 1\}$ is a normal subgroup of $\tilde\Gamma$ and $\{\pm 1\}\backslash \tilde\Gamma \cong \Gamma$. 
Let $\tilde\Gamma' = \kappa^{-1}(\tilde\Gamma)$. 
Let $\pi: {\rm SO}^+(n,1) \to \Gamma\backslash {\rm SO}^+(n,1)$ be the quotient map. 
The group $\tilde\Gamma' $ acts freely on $\tilde G$ by left multiplication. 
The set of orbits $\tilde\Gamma' \backslash\tilde G$ is the set of fibers of 
the universal covering projection $\pi\eta\kappa: \tilde G \to \Gamma\backslash {\rm SO}^+(n,1)$, 
since $\kappa, \eta$, and $\pi$ induce the following bijections
$$\tilde\Gamma' \backslash\tilde G \cong (\Kappa\backslash\tilde\Gamma')\backslash(\Kappa\backslash\tilde G)
\cong \tilde\Gamma\backslash G \cong (\{\pm 1\}\backslash\tilde\Gamma)\backslash(\{\pm 1\}\backslash G)
\cong \Gamma\backslash {\rm SO}^+(n,1). $$
Hence $\tilde\Gamma'$ acts on $\tilde G$ as the group of covering transformations of  $\pi\eta\kappa$. 
Therefore $\tilde\Gamma'\cong\pi_1(\Gamma\backslash {\rm SO}^+(n,1),\Gamma)$. 
Subgroups of index 2 are normal. 
Therefore the equivalence classes of connected double covering spaces of $\Gamma\backslash {\rm SO}^+(n,1)$ correspond 
to the subgroups of $\tilde\Gamma'$ of index 2 by Theorems V.6.6 and V.10.2 of \cite{M}.

The group $\tilde\Gamma$ is the fiber of the covering projection $\pi\eta: G \to \Gamma\backslash {\rm SO}^+(n,1)$ over the point $\Gamma$.  
Therefore $\tilde\Gamma$ is a discrete subgroup of $G$, and so $\tilde\Gamma$ acts freely and discontinuously on $G$ by left multiplication. 

The map $\iota: {\rm SO}(n) \to \Gamma\backslash {\rm SO}^+(n,1)$, defined by $\iota(B) = \Gamma\hat B$, 
maps ${\rm SO}(n)$ homeomorphically onto the fiber of $\varepsilon: \Gamma\backslash {\rm SO}^+(n,1) \to \Gamma\backslash H^n$ 
over the point $\Gamma e_{n+1}$. 
The exact sequence
$$1 \to \Lambda\  {\buildrel i \over \longrightarrow}\  \tilde \Gamma' \to \Gamma \to 1.$$
corresponds to the exact sequence of fundamental groups
$$1 \to \pi_1({\rm SO}(n),I)\  {\buildrel \iota_\ast \over \longrightarrow}\ \pi_1(\Gamma\backslash {\rm SO}^+(n,1), \Gamma) 
\ {\buildrel \varepsilon_\ast \over \longrightarrow}\  \pi_1(\Gamma\backslash H^n,\Gamma e_{n+1}) \to 1.$$

Let $\hat\Gamma'$ be a subgroup of $\tilde\Gamma'$ of index 2. 
Then $\hat\Gamma'$ corresponds to an equivalence class of connected double covering spaces of $\Gamma\backslash {\rm SO}^+(n,1)$. 
Now $\Gamma\backslash  {\rm SO}^+(n,1)$ has a connected double covering space, corresponding to $\hat \Gamma'$, 
such that the fiber $\iota({\rm SO}(n))$ is double covered by a copy of ${\rm Spin}(n)$ 
if and only if $i^{-1}(\hat\Gamma') = \Kappa$, that is, $\Lambda\cap \hat\Gamma' = \Kappa$, by Proposition V.11.1 of \cite{M}. 
As $\Lambda \subseteq \tilde\Gamma'$, we have that $\Kappa \subseteq \hat\Gamma'$. 
The subgroups of $ \tilde\Gamma'$ of index 2 that contain $\Kappa$ correspond via $\kappa$ to the subgroups of $\tilde\Gamma$ of index 2. 

Now $\kappa$ maps $\hat\Gamma'$ onto a subgroup $\hat\Gamma$ of $\tilde\Gamma$ of index 2. 
We have that $\Lambda\cap \hat\Gamma' =\Kappa$ if and only if $\{\pm 1\}\cap \hat\Gamma = \{1\}$. 
Therefore if $\Gamma\backslash H^n$ has a spin structure, whose principal ${\rm Spin}(n)$-bundle corresponds to $\hat\Gamma'$, then $\{\pm 1\}\cap \hat\Gamma = \{1\}$. 

Conversely, suppose $\{\pm 1\} \cap \hat\Gamma = \{1\}$. 
Let $s \mapsto \hat s$ be the embedding of ${\rm Spin}(n)$ into ${\rm Spin}^+(n,1)$. 
Then ${\rm Spin}(n)$ acts on the right of $\hat\Gamma \backslash{\rm Spin}^+(n,1)$ by $(\hat \Gamma g) s = \hat \Gamma g \hat s$. 
Suppose that $(\hat \Gamma g) s = \hat \Gamma g$. 
Then $g\hat sg^{-1}$ is in $\hat\Gamma$. 
Hence 
$\eta(g\hat sg^{-1}) =  \eta(g)\eta(\hat s)\eta(g)^{-1}$
is in $\Gamma$. 
Now $\eta(g)\eta(\hat s)\eta(g)^{-1}$ fixes the point $\eta(g)e_{n+1}$ of $H^n$. 
Therefore 
$\eta(g)\eta(\hat s)\eta(g)^{-1} = I,$ 
since $\Gamma$ acts freely on $H^n$. 
Hence $\eta(\hat s) = I$, and so $\hat s = \pm 1$.  
Hence $\pm 1 = g \hat s g^{-1}$ is in $\hat\Gamma$.  
As $\{\pm 1\} \cap \hat\Gamma = \{1\}$, we must have that $\hat s = 1$, and so $s = 1$. 
Therefore ${\rm Spin}(n)$ acts freely on $\hat\Gamma \backslash{\rm Spin}^+(n,1)$. 

Define $\zeta: \hat\Gamma\backslash{\rm Spin}^+(n,1) \to \Gamma\backslash H^n$ by $\zeta(\hat\Gamma g) = \Gamma\epsilon\eta(g)$.  
If $s$ is in ${\rm Spin}(n)$, then 
$$\zeta(\hat\Gamma gs) = \Gamma(\epsilon\eta(gs)) = \Gamma \eta(g)\sigma(s)e_{n+1} = \Gamma\eta(g)e_{n+1} = \Gamma\epsilon\eta(g).$$
Therefore ${\rm Spin}(n)$ acts on each fiber of $\zeta$. 

Suppose that $\zeta(\hat\Gamma h) = \zeta(\hat\Gamma g)$. 
Then $\Gamma \eta(h) e_{n+1} = \Gamma \eta(g)e_{n+1}$. 
Hence there exists $\gamma$ in $\Gamma$ such that $\eta(g)e_{n+1} = \gamma\eta(h)e_{n+1}$. 
As $\eta(\tilde \Gamma) = \Gamma$ and $\tilde \Gamma$ is 
the disjoint union of $\hat\Gamma$ and $-\hat\Gamma$, 
we have that $\eta(\hat\Gamma) = \Gamma$. 
Hence there exists $\hat\gamma$ in $\hat\Gamma$ such that $\eta(\hat\gamma) = \gamma$. 
We have that $\eta(g)^{-1}\eta(\hat\gamma)\eta(g) e_{n+1} = e_{n+1}$, 
and so $\eta(g^{-1}\hat\gamma h)e_{n+1} = e_{n+1}$. 
Therefore $\eta(g^{-1}\hat\gamma h)$ is in $\widehat{\rm SO}(n)$. 
Hence $g^{-1}\hat\gamma h$ is in $\widehat{\rm Spin}(n)$. 
Therefore there is a $\hat s$ in $\widehat{\rm Spin}(n)$ 
such that $g^{-1}\hat\gamma h = \hat s$. 
Hence $\hat\gamma h = g\hat s$,  
and so $\hat\Gamma h = \hat \Gamma g\hat s$. 
Therefore ${\rm Spin}(n)$ acts transitively on each fiber of $\zeta$. 

Moreover $\hat\Gamma\backslash{\rm Spin}^+(n,1)$ is a a principal ${\rm Spin}(n)$-bundle with projection $\zeta$, 
since the trivialization of the principal ${\rm Spin}(n)$-bundle ${\rm Spin}^+(n,1)$ 
descends under the action of $\hat\Gamma$ to a local trivialization of $\zeta$. 

The double covering epimorphism $\eta: {\rm Spin}^+(n,1) \to {\rm SO}^+(n,1)$ 
induces a double covering projection ${\rho}: \hat\Gamma \backslash {\rm Spin}^+(n,1) \to \Gamma\backslash {\rm SO}^+(n,1)$ 
defined by ${\rm \rho}(\hat\Gamma g) = \Gamma (\eta(g))$. 
We have that 
\begin{eqnarray*}
\rho((\hat\Gamma g)s) \ \  = \ \  \rho(\hat\Gamma(g\hat s)) 
& = & \Gamma (\eta(g \hat s)) \\ 
& = & \Gamma(\eta(g)\sigma(s)) \\
& = & (\Gamma (\eta(g)))\sigma(s)\ \  = \ \ \rho(\hat\Gamma g)\sigma(s).
\end{eqnarray*}
Therefore $\rho$ is a spin structure on $\Gamma\backslash H^n$ 
such that the connected double covering space $\hat\Gamma\backslash{\rm Spin}^+(n,1)$ of $\Gamma\backslash {\rm SO}^+(n,1)$ corresponds to $\hat\Gamma'$ 
and to $\hat\Gamma$.  

Suppose $\rho': P' \to \Gamma\backslash{\rm SO}^+(n,1)$ is another spin structure on $\Gamma\backslash H^n$ 
that corresponds to $\hat\Gamma'$. 
Then $\rho$ and $\rho'$ are equivalent covering spaces of $\Gamma\backslash{\rm SO}^+(n,1)$. 
Hence there is a diffeomorphism $\xi: \hat\Gamma\backslash {\rm Spin}^+(n,1) \to P'$ such that $\rho'\xi = \rho$. 
Let $g$ be in ${\rm Spin}^+(n,1)$,  let $x = \hat\Gamma g$, and let $s$ be in ${\rm Spin}(n)$.  
Then we have that 
$$\rho'(\xi(xs)) = \rho(xs) = \rho(x)\sigma(s)$$
while on the other hand
$$\rho'(\xi(x)s) = \rho'(\xi(x))\sigma(s) = \rho(x)\sigma(s).$$
Hence $\xi(xs) = \xi(x)(\pm s)$, and so $\xi(xs)s^{-1} = \xi(x)(\pm 1)$. 
Now $s\mapsto \xi(xs)s^{-1}$ is a continuous function from ${\rm Spin}(n)$ to $\{\xi(x), \xi(x)(-1)\}$. 
As ${\rm Spin}(n)$ is connected and $1 \mapsto \xi(x)$, 
we have that $\xi(xs)s^{-1} = \xi(x)$ for all $s$ in ${\rm Spin}(n)$. 
Hence $\xi(x s) = \xi(x) s$. 
Therefore $\rho$ and $\rho'$ are equivalent spin structures on $\Gamma\backslash H^n$. 
Thus $\hat\Gamma'$ and $\hat\Gamma$ correspond to an equivalence class of spin structures on $\Gamma\backslash H^n$.

Now $\{\pm 1\} \cap \hat\Gamma = \{1\}$ if and only if $\eta$ maps $\hat \Gamma$ isomorphically onto $\Gamma$. 
Thus the set of equivalence classes of spin structures on $\Gamma\backslash H^n$ is  
in one-to-one correspondence with the set of subgroups $\hat \Gamma$ of ${\rm Spin}^+(n,1)$ 
mapped isomorphically onto $\Gamma$ by $\eta$. 
\end{proof}

\subsection{Lifting isometries} 
Let $\Gamma$ be a torsion-free discrete subgroup of ${\rm SO}^+(n,1)$, and let $f$ be an element of ${\rm SO}^+(n,1)$ 
such that $f\Gamma f^{-1} = \Gamma$.  
Then $f$ induces an orientation preserving isometry $\overline f$ of the hyperbolic space form $\Gamma\backslash H^n$    
defined by $\overline f(\Gamma x) = \Gamma fx$ by Theorem 8.1.5 of \cite{R}. 
Conversely, if $\phi$ is an orientation preserving isometry of $\Gamma\backslash H^n$, 
then there exists an element $f$ of  ${\rm SO}^+(n,1)$ such that $f\Gamma f^{-1} = \Gamma$ and $\phi = \overline f$  by Theorem 8.1.5 of \cite{R}; 
moreover, $f$ is unique up to left multiplication by an element of $\Gamma$. 

Let $\phi$ be an orientation preserving isometry of $\Gamma\backslash H^n$, and let $f$ be an element of ${\rm SO}^+(n,1)$ 
such that $f\Gamma f^{-1} = \Gamma$ and $\phi = \overline f$.  
Then $\phi$ induces a self-diffeomorphism $\phi_\star$ of  $\Gamma\backslash {\rm SO}^+(n,1)$ 
defined by $\phi_\star(\Gamma g) = \Gamma  fg$. 
Let $\eta: {\rm Spin}^+(n,1) \to {\rm SO}^+(n,1)$ be the double covering epimorphism,    
and let $\hat \Gamma$ be a subgroup of ${\rm Spin}^+(n,1)$ such that $\eta$ maps $\hat\Gamma$ 
isomorphically onto $\Gamma$. 
The isometry $\phi$ of $\Gamma\backslash H^n$ is said to {\it lift} to the spin structure $\rho: \hat\Gamma\backslash {\rm Spin}^+(n,1) \to \Gamma\backslash {\rm SO}^+(n,1)$ 
on $\Gamma \backslash H^n$ induced by $\eta$ 
if $\phi_\star$ lifts to a self-diffeomorphism $\hat \phi_\star$ of $\hat\Gamma\backslash {\rm Spin}^+(n,1)$ such that $\rho \hat \phi_\star = \phi_\star \rho$. 
If the isometry $\phi$ of $\Gamma\backslash H^n$ lifts to a spin structure on $\Gamma\backslash H^n$, 
we also say that $\phi$ {\it fixes} or {\it leaves invariant} the spin structure on $\Gamma\backslash H^n$. 

\begin{theorem}\label{thm:2.2} 
Let $\Gamma$ be a torsion-free discrete subgroup of ${\rm SO}^+(n,1)$, and let $\phi$ be an orientation preserving isometry 
of the hyperbolic space form $\Gamma\backslash H^n$. 
Let $f$ be an element of ${\rm SO}^+(n,1)$ such that $f\Gamma f^{-1} = \Gamma$ and $\phi = \overline f$.   
Let $\hat\Gamma$ be a subgroup of ${\rm Spin}^+(n,1)$ such that the double covering epimorphism $\eta: {\rm Spin}^+(n,1) \to {\rm SO}^+(n,1)$ 
maps $\hat\Gamma$ isomorphically onto $\Gamma$, and let $\hat f$ be an element of ${\rm Spin}^+(n,1)$ such that $\eta(\hat f) = f$. 
Then $\phi$ lifts to the spin structure $\rho: \hat\Gamma\backslash {\rm Spin}^+(n,1) \to \Gamma\backslash {\rm SO}^+(n,1)$ 
on $\Gamma \backslash H^n$ induced by $\eta$ if and only if $\hat f\hat \Gamma \hat f^{-1} = \hat \Gamma$. 
\end{theorem}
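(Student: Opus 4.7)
The plan is to treat the two implications separately and use covering space theory for the harder direction. Note first that the condition $\hat f \hat\Gamma \hat f^{-1} = \hat\Gamma$ is insensitive to the choice of preimage of $f$: since $-1$ lies in the center of ${\rm Spin}^+(n,1)$, conjugation by $-\hat f$ agrees with conjugation by $\hat f$.

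For the sufficiency direction, assume $\hat f\hat\Gamma\hat f^{-1} = \hat\Gamma$ and define $\hat\phi_\star \colon \hat\Gamma\backslash{\rm Spin}^+(n,1) \to \hat\Gamma\backslash{\rm Spin}^+(n,1)$ by $\hat\phi_\star(\hat\Gamma g) = \hat\Gamma \hat f g$. The conjugation hypothesis shows the map descends to the quotient: if $g' = \hat\gamma g$ with $\hat\gamma\in\hat\Gamma$, then $\hat f g' = (\hat f\hat\gamma\hat f^{-1})\hat f g$ represents the same coset. It is a diffeomorphism with inverse $\hat\Gamma g\mapsto \hat\Gamma \hat f^{-1}g$, and $\rho\hat\phi_\star(\hat\Gamma g) = \Gamma\eta(\hat f g) = \Gamma f\eta(g) = \phi_\star \rho(\hat\Gamma g)$, giving the required lift.

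For necessity, suppose a lift $\hat\phi_\star$ exists; I want to extract the algebraic condition on $\hat f$. Let $q\colon {\rm Spin}^+(n,1) \to \hat\Gamma\backslash{\rm Spin}^+(n,1)$ be the quotient map and consider the two smooth maps
\[
\psi_1 = \hat\phi_\star\circ q,\qquad \psi_2(g) = q(\hat f g) = \hat\Gamma \hat f g.
\]
A direct computation, using $\eta(\hat f g) = f\eta(g)$ and the assumption $\rho\hat\phi_\star = \phi_\star\rho$, shows that both compose with $\rho$ to the same map $g \mapsto \Gamma f\eta(g)$. Thus $\psi_1$ and $\psi_2$ are two lifts along the double cover $\rho$ of the same map from the connected space ${\rm Spin}^+(n,1)$. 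The non-trivial deck transformation of $\rho$ is $\tau(\hat\Gamma g) = \hat\Gamma(-g)$, which is well-defined because $-1$ is central in ${\rm Spin}^+(n,1)$ and which is non-trivial because $-1 \notin \hat\Gamma$ by Theorem~\ref{T:spin-hyperbolic}. By the unique lifting property, $\psi_1 = \psi_2$ everywhere or $\psi_1 = \tau\circ\psi_2$ everywhere; in either case, after possibly replacing $\hat f$ by $-\hat f$, we have $\hat\phi_\star(\hat\Gamma g) = \hat\Gamma\hat f g$ for every $g\in {\rm Spin}^+(n,1)$.

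Given this equality, I evaluate at $g = 1$ and at $g = \hat\gamma$ for an arbitrary $\hat\gamma\in\hat\Gamma$. Since $\hat\Gamma\cdot 1 = \hat\Gamma\cdot\hat\gamma$, well-definedness forces $\hat\Gamma \hat f = \hat\Gamma\hat f\hat\gamma$, i.e.\ $\hat f\hat\gamma\hat f^{-1}\in\hat\Gamma$. Hence $\hat f\hat\Gamma\hat f^{-1}\subseteq\hat\Gamma$, and equality follows by applying the same argument to the lift $\hat\phi_\star^{-1}$ of $\phi^{-1} = \overline{f^{-1}}$ with the element $\hat f^{-1}$. The main technical point is simply a careful identification of the deck transformation of $\rho$ together with centrality of $-1$, so that the two possible outcomes of the lifting dichotomy both yield the desired conjugation equality; once this is pinned down, the rest is a short diagram chase.
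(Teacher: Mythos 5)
Your proof is correct, and while your sufficiency half coincides with the paper's (descend left multiplication by $\hat f$ to $\hat\Gamma\backslash{\rm Spin}^+(n,1)$ and check $\rho\hat\phi_\star=\phi_\star\rho$), your necessity half takes a genuinely different and more elementary route. The paper passes to the universal cover $\widetilde{\rm Spin}\hbox{}^+(n,1)$ (which lets it treat $n=2$, where $\pi_1({\rm SO}^+(2,1))\cong\integers$, uniformly with $n>2$), identifies $\pi_1(\Gamma\backslash{\rm SO}^+(n,1),\Gamma)$ with the deck group $\tilde\Gamma'$, and compares the index-two subgroups $\rho_\ast\pi_1$ at the two basepoints $\hat\Gamma$ and $\hat\Gamma\hat f$ to conclude that conjugation by a lift of $\hat f$ preserves $\hat\Gamma'$, whence $\hat f\hat\Gamma\hat f^{-1}=\hat\Gamma$ after applying $\kappa$. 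You instead exploit that $\rho$ is itself a double covering: $\hat\phi_\star\circ q$ and $g\mapsto\hat\Gamma\hat f g$ are both lifts of the same map from the connected space ${\rm Spin}^+(n,1)$, so at any point they lie in the same two-point fiber and, by unique lifting, they agree everywhere or differ everywhere by the deck transformation $\hat\Gamma g\mapsto\hat\Gamma(-g)$ (well defined since $-1$ is central, nontrivial since $-1\notin\hat\Gamma$ by Theorem~\ref{T:spin-hyperbolic}); either way $\hat\phi_\star$ is induced by $\pm\hat f$, and your coset evaluation plus the same argument for $\hat\phi_\star^{-1}$ (or, alternatively, injectivity of $\eta$ on $\hat\Gamma$ together with $f\Gamma f^{-1}=\Gamma$) upgrades the inclusion to equality. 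Your version buys brevity and avoids the universal-cover and fundamental-group bookkeeping; the paper's version reuses the $\pi_1$-subgroup framework it has already built for Theorem~\ref{T:spin-hyperbolic}, making the covering-space correspondence explicit. Both arguments ultimately rest on the same two facts: centrality of $-1$ and $\{\pm 1\}\cap\hat\Gamma=\{1\}$. The only cosmetic omission is that in the sufficiency step you assert rather than verify smoothness of the descended map; one line using the covering projection ${\rm Spin}^+(n,1)\to\hat\Gamma\backslash{\rm Spin}^+(n,1)$, as in the paper, closes this.
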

\begin{proof}
Suppose that $\hat f\hat \Gamma \hat f^{-1} = \hat \Gamma$.
Define $\hat \phi_\star: \hat\Gamma\backslash {\rm Spin}^+(n,1) \to \hat\Gamma\backslash {\rm Spin}^+(n,1)$ 
by $\hat \phi_\star(\hat\Gamma g) = \hat\Gamma \hat f g$. 
If $\hat\gamma$ is in $\hat\Gamma$, then $\hat\Gamma\hat f\hat \gamma g = \hat\Gamma \hat f\hat\gamma\hat f ^{-1} \hat f g = \hat \Gamma \hat f g$, 
and so $\hat \phi_\star$ is well defined. 

Let $\hat f_\ast: {\rm Spin}^+(n,1) \to {\rm Spin}^+(n,1)$ be left multiplication by $\hat f$, 
and let $\varpi: {\rm Spin}^+(n,1) \to \hat\Gamma\backslash {\rm Spin}^+(n,1)$ be the quotient map. 
Then $\hat \phi_\star \varpi = \varpi \hat f_\ast$, since
$$\hat \phi_\star \varpi(g) = \hat \phi_\star (\hat \Gamma g) = \hat\Gamma \hat f g =  \varpi(\hat f g) =  \varpi \hat f_\ast(g). $$ 
Hence $\hat \phi_\star$ is smooth, since $\varpi$ is a smooth covering projection.  
Moreover $\hat \phi_\star$ is a diffeomorphism with inverse $\hat \phi_\star^{-1}$ defined 
by $\hat \phi_\star^{-1}(\hat\Gamma g) = \hat\Gamma \hat f^{-1} g$. 

Observe that $\rho\hat\phi_\star = \phi_\star \rho$, since
$$\rho\hat\phi_\star(\hat\Gamma g) = \rho(\hat\Gamma \hat f g) = \Gamma \eta(\hat f g) = \Gamma f \eta(g) = \phi_\star (\Gamma \eta(g)) = \phi_\star(\rho(\hat\Gamma g)).$$
Therefore $\phi$ lifts to the spin structure $\rho$. 

Conversely, suppose that $\phi$ lifts to the spin structure $\rho$. 
Then $\phi_\star$ lifts to a self-diffeomorphism $\hat\phi_\star$ of $\hat\Gamma\backslash {\rm Spin}^+(n,1)$ such that $\rho \hat\phi_\star = \phi_\star \rho$. 
Multiplication by $-1$ on the right of $\hat\Gamma\backslash {\rm Spin}^+(n,1)$ is the nonidentity covering transformation of $\rho$, 
and we denote this covering transformation by left multiplication by $-1$. 
Observe that 
$$\rho \hat\phi_\star(\hat\Gamma) = \phi_\star\rho(\hat\Gamma) = \phi_\star(\Gamma) = \Gamma f.$$
Hence $\hat\phi_\star(\hat \Gamma) = \hat\Gamma(\pm \hat f) = \pm \hat\Gamma \hat f$. 
By replacing $\hat\phi_\star$ by $-\hat\phi_\star$ if necessary, we may assume that $\hat\phi_\star(\hat \Gamma) = \hat\Gamma\hat f$.
Theorem 5.1 of \cite{M} implies that 
$$(\phi_\star \rho)_\ast \pi_1(\hat\Gamma\backslash {\rm Spin}^+(n,1),\hat\Gamma) \subseteq \rho_\ast \pi_1(\hat\Gamma\backslash {\rm Spin}^+(n,1),\hat\Gamma\hat f).$$
As both $(\phi_\star)_\ast\rho_\ast \pi_1(\hat\Gamma\backslash {\rm Spin}^+(n,1),\hat\Gamma)$ and  $\rho_\ast \pi_1(\hat\Gamma\backslash {\rm Spin}^+(n,1),\hat\Gamma\hat f)$ 
are subgroups of index 2 of $\pi_1( \Gamma\backslash {\rm SO}^+(n,1),\Gamma g)$, we have that 
$$(\phi_\star)_\ast\rho_\ast \pi_1(\hat\Gamma\backslash {\rm Spin}^+(n,1),\hat\Gamma) = \rho_\ast \pi_1(\hat\Gamma\backslash {\rm Spin}^+(n,1),\hat\Gamma\hat f).$$

Let $\kappa: \widetilde{\rm Spin}\hbox{}^+(n,1) \to {\rm Spin}^+(n,1)$ be the universal covering projection considered in the proof of 
Theorem~\ref{T:spin-hyperbolic}, 
and let $\pi: {\rm SO}^+(n,1) \to \Gamma\backslash {\rm SO}^+(n,1)$ be the quotient map. 
Then $\pi\eta\kappa:  \widetilde{\rm Spin}\hbox{}^+(n,1) \to \Gamma\backslash {\rm SO}^+(n,1)$ is a universal covering projection. 
The fiber of $\pi\eta\kappa$ over the point $\Gamma$ is the subgroup $\tilde \Gamma'$ of $\widetilde{\rm Spin}\hbox{}^+(n,1)$ considered in the proof of Theorem~\ref{T:spin-hyperbolic}, 
moreover the group $\tilde \Gamma'$ acts on $\widetilde{\rm Spin}\hbox{}^+(n,1)$ by left multiplication as the group of covering transformations of $\pi\eta\kappa$. 
There is an isomorphism 
$$\chi: \tilde\Gamma' \to \pi_1( \Gamma\backslash {\rm SO}^+(n,1),\Gamma)$$
such that if $\omega: [0,1] \to \Gamma\backslash {\rm SO}^+(n,1)$ is a loop based at the point $\Gamma$, and $\tilde\omega: [0,1] \to \widetilde{\rm Spin}\hbox{}^+(n,1)$ 
is the lift of $\omega$ starting at the identity element $1$ of the group $\widetilde{\rm Spin}\hbox{}^+(n,1)$, then $\chi(\tilde\omega(1)) = [\omega]$. 

Let $\tilde f$ be an element of $\widetilde{\rm Spin}\hbox{}^+(n,1)$ such that $\kappa(\tilde f) = \hat f$. 
The fiber of $\pi\eta\kappa$ over the point $\Gamma f$ is the coset $\tilde\Gamma' \tilde f$. 
There is an isomorphism 
$$\psi: \tilde\Gamma' \to \pi_1( \Gamma\backslash {\rm SO}^+(n,1),\Gamma f)$$
such that if $\omega: [0,1] \to \Gamma\backslash {\rm SO}^+(n,1)$ is a loop based at the point $\Gamma f$, and $\tilde\omega: [0,1] \to \widetilde{\rm Spin}\hbox{}^+(n,1)$ 
is the lift of $\omega$ starting at $\tilde f$, then $\psi(\tilde\omega(1)\tilde f^{-1}) = [\omega]$. 

Let $\omega: [0,1] \to \Gamma\backslash {\rm SO}^+(n,1)$ is a loop based at the point $\Gamma$. 
Then $(\phi_\star)_\ast([\omega]) = [\phi_\star\omega]$ and $\phi_\star\omega:  [0,1] \to \Gamma\backslash {\rm SO}^+(n,1)$ is a loop based at the point $\Gamma f$. 
Let $\tilde\omega: [0,1] \to \widetilde{\rm Spin}\hbox{}^+(n,1)$ be the lift of $\omega$ starting at $1$. 
Then $\tilde f\tilde\omega :[0,1] \to \widetilde{\rm Spin}\hbox{}^+(n,1)$ is the lift of $\phi_\star\omega$ starting at $\tilde f$, which implies 
that $\psi(\tilde f \tilde\omega(1)\tilde f^{-1}) = [\phi_\star\omega]$. 

As $f\Gamma f^{-1} = \Gamma$, we have that $\tilde f \tilde\Gamma' \tilde f^{-1} = \tilde \Gamma'$. 
Let $\tilde f_\sharp$ be the automorphism of $\tilde\Gamma'$ defined by conjugating by $\tilde f$. 
Then we have that $\psi\tilde f_\sharp = (\phi_\star)_\ast \chi$. 

Let $\hat\Gamma' = \kappa^{-1}(\hat \Gamma)$. Then we have 
$$\chi(\hat\Gamma') = \rho_\ast \pi_1(\hat\Gamma\backslash {\rm Spin}^+(n,1),\hat\Gamma),$$
and 
$$\psi(\hat\Gamma') = \rho_\ast \pi_1(\hat\Gamma\backslash {\rm Spin}^+(n,1),\hat\Gamma \hat f). $$
Therefore we have that 
$$\psi\tilde f_\sharp(\hat\Gamma') = (\phi_\star)_\ast \chi(\hat\Gamma') = \psi(\hat\Gamma'),$$
and so $\tilde f_\sharp(\hat\Gamma') = \hat\Gamma'$. 
Hence $\tilde f \hat\Gamma' \tilde f^{-1} = \hat\Gamma'$. 
After applying $\kappa$, we have $\hat f\hat\Gamma \hat f^{-1} = \hat\Gamma$. 
\end{proof}


\section{Spin Groups} 

In this section, we give the formal definitions of ${\rm Spin}(n)$ and ${\rm Spin}^+(n,1)$ in terms of Clifford algebras.  
We follow the development in Chapter I of \cite{L-M}. 

\subsection{Clifford algebra} 
Let $V$ be a finite dimensional vector space over $\mathbb{K} = \realnos$ or $\complexnos$, and suppose $q$ is a nondegenerate quadratic form on $V$. 
The {\it Clifford algebra} ${\rm C\uell}(V,q)$ associated to $(V,q)$ is the associative algebra, with unit $1$, obtained from 
the free tensor algebra on $V$ by adjoining relations $v\otimes v = -q(v) 1$ for each $v$ in $V$. 
There is a natural embedding of $V$ into ${\rm C\uell}(V,q)$. 
The algebra ${\rm C\uell}(V,q)$ is generated by $V$ and the unit $1$ subject to the relations $v^2 = -q(v)1$ for each $v$ in $V$. 

The map $\alpha(v) = -v$ on $V$ extends to an algebra automorphism $\alpha$ of ${\rm C\uell}(V,q)$.  
As $\alpha^2 = id$, there is a vector space decomposition
$${\rm C\uell}(V,q) = {\rm C\uell}^0(V,q)\oplus{\rm C\uell}^1(V,q)$$
where 
$${\rm C\uell}^i(V,q) = \{x\in{\rm C\uell}(V,q): \alpha(x) = (-1)^ix\}.$$
The elements of ${\rm C\uell}^0(V,q)$ are called the {\it even} elements of ${\rm C\uell}(V,q)$. 
Note that ${\rm C\uell}^0(V,q)$ is a subalgebra of ${\rm C\uell}(V,q)$. 
The elements of ${\rm C\uell}^1(V,q)$ are called the {\it odd} elements of ${\rm C\uell}(V,q)$. 
The elements of $V$ are odd. 
The product of two elements of the same parity is even, and the product of two elements of different parities is odd.

If $v$ is in $V$ and $q(v) \neq 0$, then $v$ is invertible in ${\rm C\uell}(V,q)$ with $v^{-1} =-v/q(V)$. 
By Proposition I.2.2 of \cite{L-M}, there is a mapping ${\rm Ad}_v: V \to V$, defined by ${\rm Ad}_v(w) = vwv^{-1}$,  
and ${\rm Ad}_v = -\rho_v$ where $\rho_v$ is the reflection of $V$ in the subspace of all vectors orthogonal to $v$. 

Let ${\rm P}(V,q)$ be the of multiplicative subgroup of ${\rm C\uell}(V,q)$ generated by all $v$ in $V$ such that $q(v)\neq 0$, 
and let ${\rm O}(V,q)$ be the orthogonal group of $(V,q)$. 
Then we have a homomorphism ${\rm Ad}: {\rm P}(V,q) \to {\rm O}(V,q)$, defined by ${\rm Ad}(x)(w) = xwx^{-1}$. 

\subsection{Spin group} 
The {\it spin group} of $(V,q)$ is the subgroup of ${\rm P}(V,q)$ defined to be 
$${\rm Spin}(V,q) = \{v_1\cdots v_k : v_i \in V,\ \hbox{with}\ q(v_i) = \pm 1\ \hbox{for each}\ i,\ \hbox{and}\ k\ \hbox{even}\}.$$
All the elements of ${\rm Spin}(V,q)$ are even, and so ${\rm Spin}(V,q)$ is a subgroup of ${\rm C\uell}^0(V,q)$. 
If $\mathbb{K} = \realnos$, then the homomorphism ${\rm Ad}: {\rm P}(V,q) \to {\rm O}(V,q)$ restricts to an epimorphism 
${\rm Ad}: {\rm Spin}(V,q) \to {\rm SO}(V,q)$ 
with kernel $\{\pm 1\}$ by Theorem I.2.9 of \cite{L-M}.  

The identity map of $V$ extends to an antiautomorphism $(\  )^t: {\rm C\uell}(V,q) \to {\rm C\uell}(V,q)$ called the {\it transpose}. 
If $x, y$ are in ${\rm C\uell}(V,q)$, then $(xy)^t = y^tx^t$. 
The {\it norm} mapping $N:  {\rm C\uell}(V,q) \to {\rm C\uell}(V,q)$ is defined by $N(x) = x\alpha(x^t)$. 
Note if $v$ is in $V$, then $N(v) = q(v)$. 
Note also that $\alpha(x^t) = (\alpha(x))^t$ for all $x$ in ${\rm C\uell}(V,q)$. 
By Proposition I.2.5 of \cite{L-M}, the restriction of $N$ to the group ${\rm P}(V,q)$ gives a homomorphism $N: {\rm P}(V,q) \to {\mathbb K}^\times$. 

\begin{lemma}\label{lem:3.1} 
If ${\mathbb K} = \realnos$, then 
$${\rm Spin}(V,q) = \{x \in {\rm P}(V,q)\cap {\rm C\uell}^0(V,q) : N(x) = \pm 1\}.$$
\end{lemma}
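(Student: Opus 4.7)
The plan is to prove both inclusions separately, the forward one being essentially a direct computation and the reverse requiring a rescaling argument that crucially uses $\mathbb{K} = \mathbb{R}$.

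For the inclusion $\subseteq$, I would take $x = v_1 \cdots v_k \in {\rm Spin}(V,q)$ with $k$ even and each $q(v_i) = \pm 1$. Membership in ${\rm P}(V,q)$ is immediate, and membership in ${\rm C\uell}^0(V,q)$ follows because each $v_i$ is odd and a product of an even number of odd elements is even. To compute the norm, I would first observe that $x^t = v_k \cdots v_1$ and $\alpha(x^t) = (-1)^k v_k \cdots v_1 = v_k \cdots v_1$. Then
\[
N(x) \;=\; x\,\alpha(x^t) \;=\; v_1 \cdots v_k v_k \cdots v_1 \;=\; \prod_{i=1}^{k}(-q(v_i)) \;=\; \prod_{i=1}^k q(v_i)
\]
by iteratively applying $v_i^2 = -q(v_i)\cdot 1$ from the middle outward (the sign $(-1)^k$ disappears since $k$ is even), so $N(x) = \pm 1$.

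For the inclusion $\supseteq$, I would take $x \in {\rm P}(V,q)\cap {\rm C\uell}^0(V,q)$ with $N(x) = \pm 1$ and write $x = v_1 \cdots v_k$ with each $v_i \in V$ and $q(v_i) \neq 0$ (this is possible by the definition of ${\rm P}(V,q)$). The parity of $x$ forces $k$ to be even, since otherwise $x$ would be odd. The same computation as above gives $N(x) = \prod_i q(v_i) = \pm 1$. Now I would rescale: since $\mathbb{K} = \mathbb{R}$ and each $q(v_i)\neq 0$, set $\lambda_i = \sqrt{|q(v_i)|} > 0$ and $u_i = v_i/\lambda_i \in V$, so that $q(u_i) = q(v_i)/\lambda_i^2 = \pm 1$. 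Then
\[
x \;=\; v_1 \cdots v_k \;=\; (\lambda_1\cdots\lambda_k)\,u_1 \cdots u_k.
\]
Taking the norm of both sides and using the homomorphism property of $N$ together with $N(\lambda\cdot 1) = \lambda^2$ for scalars, I would get $\pm 1 = N(x) = (\lambda_1\cdots\lambda_k)^2 \cdot (\pm 1)$, whence $(\lambda_1\cdots\lambda_k)^2 = 1$, and since each $\lambda_i > 0$ this forces $\lambda_1\cdots\lambda_k = 1$. Therefore $x = u_1 \cdots u_k$ with $k$ even and $q(u_i) = \pm 1$, so $x \in {\rm Spin}(V,q)$.

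The only place where something nontrivial is happening is the rescaling step, which depends on being able to extract positive real square roots of $|q(v_i)|$; this is precisely why the lemma is stated for $\mathbb{K}=\mathbb{R}$ rather than $\mathbb{C}$. The computation $N(v_1\cdots v_k) = \prod q(v_i)$ is standard (and cited in the preceding discussion via Proposition I.2.5 of \cite{L-M}), so I would appeal to it rather than re-derive it in full detail.
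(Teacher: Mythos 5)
Your proof is correct and follows essentially the same route as the paper: write $x=v_1\cdots v_k$ with each $q(v_i)\neq 0$, use parity to force $k$ even, use multiplicativity of $N$ to get $\prod_i q(v_i)=\pm 1$, and rescale each $v_i$ by $\sqrt{|q(v_i)|}$ so that the accumulated scalar factor is $\sqrt{|q(v_1)\cdots q(v_k)|}=1$. The only cosmetic difference is that you spell out the forward inclusion and the scalar-factor bookkeeping (via $N(\lambda\cdot 1)=\lambda^2$), which the paper dispatches with ``clearly'' and a one-line absorption of the square root.
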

\begin{proof}
Clearly, we have that 
$${\rm Spin}(V,q) \subseteq \{x \in {\rm P}(V,q)\cap {\rm C\uell}^0(V,q) : N(x) = \pm 1\}.$$
Suppose $x  \in {\rm P}(V,q)\cap {\rm C\uell}^0(V,q)$ and $N(x) = \pm 1$. 
Then $x = v_1\cdots v_k$ with $q(v_i)\neq 0$ for each $i$. 
As $x$ is even and each $v_i$ is odd, $k$ must be even. 
Observe that 
$$q(v_1)\cdots q(v_k) = N(v_1)\cdots N(v_k) = N(x) = \pm 1.$$
Let $\hat v_i = v_i/\sqrt{|q(v_i)|}$ for each $i$. 
Then $q(\hat v_i) = q(v_i)/|q(v_i)| = \pm 1$ for each $i$, and 
$$x = v_1\cdots v_k = \hat v_1 \cdots \hat v_k \sqrt{|q(v_1)\cdots q(v_k)|} = \hat v_1 \cdots \hat v_k.$$
Therefore $x$ is in ${\rm Spin}(V,q)$. 
\end{proof}

Now assume  $q$ is the quadratic form $x_1^2+\cdots + x_n^2$ on $V= {\mathbb K}^n$. 
If ${\mathbb K} = \realnos$, we denote ${\rm C\uell}(V,q)$ by ${\rm C\uell}(n)$ and ${\rm SO}(V,q)$ by ${\rm SO}(n)$, and ${\rm Spin}(V,q)$ by ${\rm Spin}(n)$. 
If ${\mathbb K} = \complexnos$, we denote  ${\rm C\uell}(V,q)$ by ${\complexnos\uell}(n)$. 
As discussed on p.\,20 of \cite{L-M}, both ${\rm SO}(n)$ and ${\rm Spin}(n)$ are connected Lie groups and 
the epimorphism ${\rm Ad}: {\rm Spin}(n) \to {\rm SO}(n)$ is a double covering for all $n \geq 2$.

Let $e_1, \ldots, e_n$ be the standard basis of $\realnos^n$. 
The algebra ${\rm C\uell}(n)$ is generated 
by $e_1,\ldots, e_n$ with relations $e_ie_j = -e_je_i$, for $i \neq j$ and $e_i^2 = -1$ for each $i =1,\ldots, n$. 
A basis for ${\rm C\uell}(n)$ is the set of all products $e_{i_1}\cdots e_{i_k}$ with $1 \leq i_1 < \cdots < i_k \leq n$. 
We allow the empty product which is defined to be the unit $1$. 
Therefore ${\rm C\uell}(n)$ is a $2^n$-dimensional vector space. 
A basis for ${\rm C\uell}^0(n)$ is the set of all such products of even length, 
and so ${\rm C\uell}^0(n)$ is a $2^{n-1}$-dimensional vector space. 

Now assume $q$ is the quadratic form $x_1^2 + \cdots + x_n^2 - x_{n+1}^2$ on $V= {\mathbb K}^{n+1}$. 
If ${\mathbb K} = \realnos$, we denote ${\rm C\uell}(V,q)$ by ${\rm C\uell}(n,1)$ and ${\rm SO}(V,q)$ by ${\rm SO}(n,1)$ 
and ${\rm Spin}(V,q)$ by ${\rm Spin}(n,1)$ and ${\rm P}(V,q)$ by ${\rm P}(n,1)$. 
If ${\mathbb K} = \complexnos$, we denote ${\rm C\uell}(V,q)$ by ${\complexnos\uell}(n,1)$. 
As discussed on p.\,20 of \cite{L-M}, both ${\rm SO}(n,1)$ and ${\rm Spin}(n,1)$ are Lie groups with two connected components for each $n \geq 2$.  
The connected component of ${\rm SO}(n,1)$ containing the identity is  ${\rm SO}^+(n,1)$,  
and so we denote the connected component of ${\rm Spin}(n,1)$ containing the identity by ${\rm Spin}^+(n,1)$. 
By Theorem I.2.10 of \cite{L-M}, 
${\rm Ad}: {\rm Spin}(n,1) \to {\rm SO}(n,1)$ is a double covering that restricts to a double covering ${\rm Ad}: {\rm Spin}^+(n,1) \to {\rm SO}^+(n,1)$ for all $n \geq 2$.

Let $e_1, \ldots, e_{n+1}$ be the standard basis of $\realnos^{n+1}$. 
Then $e_1, \ldots, e_{n+1}$ are Lorentz orthonormal.  
The algebra ${\rm C\uell}(n,1)$ is generated by $e_1, \ldots, e_{n+1}$ subject to the relations 
$e_ie_j = -e_je_i$, for $i \neq j$ and $e_i^2 = -1$ for each $i =1,\ldots, n$, and $e_{n+1}^2 = 1$. 
A basis for ${\rm C\uell}(n,1)$ is the set of all products $e_{i_1}\cdots e_{i_k}$ with $1 \leq i_1 < \cdots < i_k \leq n+1$. 
We allow the empty product which is defined to be the unit $1$. 
Therefore ${\rm C\uell}(n,1)$ is a $2^{n+1}$-dimensional vector space. 
A basis for ${\rm C\uell}^0(n,1)$ is the set of all such products of even length, 
and so ${\rm C\uell}^0(n,1)$ is a $2^n$-dimensional vector space. 

The algebra ${\rm C\uell}(n)$ embeds naturally into ${\rm C\uell}(n,1)$ as the subalgebra $\widehat{\rm C\uell}(n)$ generated by $e_1,\ldots, e_n$. 
We shall identity ${\rm C\uell}(n)$ with $\widehat{\rm C\uell}(n)$ via this embedding. 
Then ${\rm Spin}(n)$ is identified with the subgroup $\widehat{\rm Spin}(n)$ of ${\rm Spin}^+(n,1)$ 
that stabilizes $e_{n+1}$ under the action of ${\rm Spin}^+(n,1)$ on $\realnos^{n+1}$  
by conjugation in ${\rm C\uell}(n,1)$. 
We shall also identify ${\rm SO}(n)$ with the subgroup $\widehat{\rm SO}(n)$ of ${\rm SO}^+(n,1)$ via the embedding $B \mapsto \hat B$. 
Then ${\rm Ad}: {\rm Spin}^+(n,1) \to {\rm SO}^+(n,1)$ restricts to a double covering ${\rm Ad}: {\rm Spin}(n) \to {\rm SO}(n)$.

\begin{lemma} \label{L:spin+} 
For each integer $n \geq 2$, we have that 
$${\rm Spin}^+(n,1) = \{x \in {\rm P}(n,1)\cap {\rm C\uell}^0(n,1) : N(x) = 1\}.$$
\end{lemma}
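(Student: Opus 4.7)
The plan is to deduce Lemma \ref{L:spin+} from Lemma \ref{lem:3.1} (applied to the real Lorentzian form $q$) by pinning down which of the two connected components of $\mathrm{Spin}(n,1)$ is picked out by the condition $N(x)=1$. Lemma \ref{lem:3.1} already identifies $\mathrm{Spin}(n,1)$ with $\{x\in \mathrm{P}(n,1)\cap \mathrm{C\uell}^0(n,1):N(x)=\pm 1\}$, so the only task is to show that among elements of $\mathrm{Spin}(n,1)$, the condition $N(x)=1$ cuts out exactly the identity component.

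First I would observe that by Proposition I.2.5 of \cite{L-M}, the norm $N:\mathrm{P}(n,1)\to\realnos^\times$ is a continuous group homomorphism, and by Lemma \ref{lem:3.1} its restriction lands in $\{\pm 1\}$ on $\mathrm{Spin}(n,1)$. Since $\mathrm{Spin}^+(n,1)$ is connected and contains $1$ with $N(1)=1$, continuity forces $N\equiv 1$ on $\mathrm{Spin}^+(n,1)$. This gives the inclusion
$$\mathrm{Spin}^+(n,1)\subseteq\{x\in \mathrm{P}(n,1)\cap \mathrm{C\uell}^0(n,1):N(x)=1\}.$$

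For the reverse inclusion, suppose $x\in \mathrm{P}(n,1)\cap \mathrm{C\uell}^0(n,1)$ with $N(x)=1$. By Lemma \ref{lem:3.1}, $x\in \mathrm{Spin}(n,1)$. The text notes that $\mathrm{Spin}(n,1)$ has exactly two connected components, one of which is $\mathrm{Spin}^+(n,1)$; the other component is a coset $g_0\cdot \mathrm{Spin}^+(n,1)$ on which, by the homomorphism property, $N$ is identically equal to $N(g_0)$. So it suffices to exhibit a single $g_0\in \mathrm{Spin}(n,1)\setminus \mathrm{Spin}^+(n,1)$ with $N(g_0)=-1$; then $N^{-1}(1)\cap \mathrm{Spin}(n,1)=\mathrm{Spin}^+(n,1)$ and we are done.

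The natural candidate is $g_0=e_{n+1}e_1$: it is even, equals a product of two unit-length vectors (since $q(e_{n+1})=-1,\ q(e_1)=1$), and $N(g_0)=q(e_{n+1})q(e_1)=-1$, so $g_0\in \mathrm{Spin}(n,1)$. A short Clifford-algebra computation, using $e_i^2=-1$ for $i\leq n$, $e_{n+1}^2=1$, and anticommutation, shows that $\mathrm{Ad}(g_0)$ acts on the basis $e_1,\ldots,e_{n+1}$ as the diagonal matrix $\mathrm{diag}(-1,1,\ldots,1,-1)$: determinant $+1$ but reversing the time direction $e_{n+1}$, hence in $\mathrm{SO}(n,1)\setminus \mathrm{SO}^+(n,1)$. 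This places $g_0$ in the non-identity component, as required. The only non-formal step, and thus the main (mild) obstacle, is this explicit verification that $\mathrm{Ad}(e_{n+1}e_1)\notin \mathrm{SO}^+(n,1)$; everything else is a direct combination of Lemma \ref{lem:3.1}, continuity of $N$, and the two-component structure of $\mathrm{Spin}(n,1)$.
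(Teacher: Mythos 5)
Your proof is correct and follows essentially the same route as the paper: Lemma \ref{lem:3.1}, continuity of $N$ on the connected group ${\rm Spin}^+(n,1)$ to get $N\equiv 1$ there, and the even element $e_1e_{n+1}$ of norm $-1$ to show $N\equiv -1$ on the other component of ${\rm Spin}(n,1)$. The only difference is that your Clifford-algebra computation of ${\rm Ad}(e_{n+1}e_1)$ is superfluous (not wrong): since $N\equiv 1$ on ${\rm Spin}^+(n,1)$, the equality $N(e_{n+1}e_1)=-1$ already places $e_{n+1}e_1$ in the non-identity component, which is exactly how the paper argues.
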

\begin{proof}
Since $N: {\rm C\uell}(n,1) \to {\rm C\uell}(n,1)$ is a continuous function, its
restriction $N: {\rm Spin}(n,1) \to \{\pm 1\}$ is a continuous function. 
As $N(1) = 1$ and ${\rm Spin}^+(n,1)$ is connected, we have that $N({\rm Spin}^+(n,1)) = 1$. 

Observe that 
$$N(e_1e_{n+1}) = N(e_1)N(e_{n+1}) = q(e_1)q(e_{n+1}) = 1(-1) = -1.$$
Hence $e_1e_{n+1}$ is in ${\rm Spin}(n,1)$ but not in ${\rm Spin}^+(n,1)$,  
and $N$ maps the connected component of ${\rm Spin}(n,1)$ containing $e_1e_{n+1}$ to $-1$. 
Hence 
$${\rm Spin}^+(n,1) = \{x \in {\rm Spin}(n,1) : N(x) = 1\}.$$
The desired result now follows from Lemma \ref{lem:3.1}. 
\end{proof}

\begin{lemma}\label{L:spin234} 
For $n =2,3,4$, we have that 
$${\rm Spin}^+(n,1) = \{x \in {\rm C\uell}^0(n,1) : x^tx = 1\}.$$
\end{lemma}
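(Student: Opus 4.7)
\textbf{Proof plan for Lemma \ref{L:spin234}.}

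For the inclusion $\subseteq$, I would take $x \in {\rm Spin}^+(n,1)$ and invoke Lemma \ref{L:spin+} to get $x \in {\rm C\uell}^0(n,1)$ with $N(x) = x\alpha(x^t) = 1$. Since $x$ is even, $\alpha(x) = x$, and since $\alpha$ and transpose commute, $\alpha(x^t) = x^t$; therefore $N(x) = xx^t = 1$. In particular $x$ is invertible with $x^{-1} = x^t$, so $x^t x = 1$ as well.

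For the reverse inclusion, I would set
\[
S = \{x \in {\rm C\uell}^0(n,1) : x^t x = 1\}
\]
and show $S = {\rm Spin}^+(n,1)$ via a dimension-plus-connectedness argument. The set $S$ is closed under multiplication ($(xy)^t(xy) = y^t x^t x y = y^t y = 1$) and contains inverses ($x^{-1} = x^t \in S$), hence is a closed subgroup of the unit group of ${\rm C\uell}^0(n,1)$, and by the first part it contains ${\rm Spin}^+(n,1)$.

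Next I would compute $\dim S$. Let $W \subseteq {\rm C\uell}^0(n,1)$ be the subspace of self-transpose even elements; the map $\phi(x) = x^t x$ has image in $W$ (since $(x^tx)^t = x^t x$), and its differential at $1$ is $d\phi_1(y) = y + y^t$, which surjects onto $W$. By left translation (if $x\in S$, then $y \mapsto xy$ is a diffeomorphism carrying $S$ to itself and $1$ to $x$), $d\phi_x$ is surjective at every $x \in S$, so $S$ is a smooth submanifold of codimension $\dim W$. Because $(e_{i_1}\cdots e_{i_k})^t = (-1)^{k(k-1)/2} e_{i_1}\cdots e_{i_k}$, the self-transpose even basis monomials are exactly those of degree $k \equiv 0 \pmod 4$, so $\dim W = 1$ for $n=2$, $\dim W = 1 + \binom{4}{4} = 2$ for $n=3$, and $\dim W = 1 + \binom{5}{4} = 6$ for $n=4$. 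Subtracting from $\dim {\rm C\uell}^0(n,1) = 2^n = 4, 8, 16$ gives $\dim S = 3, 6, 10$, which matches $\dim {\rm Spin}^+(n,1)$. Hence ${\rm Spin}^+(n,1) \subseteq S$ is a closed connected Lie subgroup of the same dimension, so it is the identity component of $S$.

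The main obstacle is then to show $S$ itself is connected, so that $S = {\rm Spin}^+(n,1)$. For $n=2$, a direct expansion using $e_ie_j = -e_je_i$ and $e_i^2 = \mp 1$ reduces $x^tx = 1$ to the single scalar equation $a^2 + b^2 - c^2 - d^2 = 1$ for $x = a + be_1e_2 + ce_1e_3 + de_2e_3$, and this hyperquadric is homeomorphic to $S^1 \times \realnos^2$, hence connected. For $n = 3, 4$ the same strategy works but the computation is more elaborate; alternatively, one can identify ${\rm C\uell}^0(n,1)$ explicitly with a matrix algebra ($M_2(\realnos)$, $M_2(\complexnos)$, $M_2(\mathbb{H})$ for $n = 2, 3, 4$, which is exactly the setup developed in \S 5 and \S 6) under which the Clifford transpose becomes the classical adjugate and $S$ becomes the corresponding ``special unitary'' group, whose connectedness is standard.
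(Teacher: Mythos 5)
Your proposal is correct, and it takes a genuinely different route from the paper. The paper disposes of the hard inclusion by citing Proposition 16.15 of Porteous, which asserts that in these low dimensions ${\rm Spin}(n,1)$ consists of \emph{all} even elements with $N(x)=\pm 1$ (so the membership in ${\rm P}(n,1)$ demanded by Lemma \ref{lem:3.1} is automatic); combining this with Lemma \ref{L:spin+} and the manipulation $N(x)=x\alpha(x^t)=xx^t$ --- the same manipulation you use for your easy inclusion --- gives the statement in two lines. You instead prove the hard inclusion internally: $S=\{x\in{\rm C\uell}^0(n,1):x^tx=1\}$ is a closed subgroup; your regular-value argument with $\phi(x)=x^tx$, $d\phi_1(y)=y+y^t$, and left translation correctly gives $\dim S=2^n-\dim W$; your counts $\dim W=1,2,6$, hence $\dim S=3,6,10=\dim{\rm Spin}^+(n,1)$, are right; and your explicit $n=2$ computation $x^tx=a^2+b^2-c^2-d^2$ is correct, so everything reduces to connectedness of $S$. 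What your route buys is self-containedness and a transparent view of where low dimensionality enters: the coincidence $2^n-\dim W=n(n+1)/2$ already fails at $n=5$, exactly where the lemma stops holding; what the paper's route buys is brevity, at the cost of an external reference whose content (every even element of norm $\pm 1$ is a product of vectors) is the real point. Two cautions for your $n=3,4$ connectedness step: on ${\rm C\uell}^0(4,1)\cong\quaternions(2)$ the Clifford transpose is not an adjugate but the map $\sigma(A)=JA^{\ast}J$ of \S 6, so $S\cong{\rm SU}(1,1;\quaternions)\cong{\rm Sp}(1,1)$, while the $n=3$ identification ${\rm C\uell}^0(3,1)\cong\complexnos(2)$ with $x^tx=(\det x)1$, giving $S\cong{\rm SL}(2,\complexnos)$, is standard but not in the paper and needs its own verification; moreover, when appealing to \S 5--\S 6 you may only use the algebra isomorphisms and the identity $\phi(\,)^t=\sigma\phi$, which are established there independently of this lemma, since Theorems \ref{thm:5.3} and \ref{T:Ghat} themselves quote Lemma \ref{L:spin234} and citing their conclusions would be circular.
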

\begin{proof}
By Proposition 16.15 of \cite{P}, 
$${\rm Spin}(n,1) = \{x \in {\rm C\uell}^0(2,1): N(x) = \pm 1\}.$$
Hence by Lemma \ref{L:spin+}, we have that
$${\rm Spin}^+(n,1) = \{x \in {\rm C\uell}^0(2,1): N(x) =  1\}.$$
Let $x$ be in $ {\rm C\uell}^0(n,1)$ such that $N(x) = 1$.  
Then $\alpha(x) = x$, and so 
$$1 = N(x) = x \alpha(x^t) = x(\alpha(x))^t = x x^t.$$
Therefore 
$${\rm Spin}^+(n,1) = \{x \in {\rm C\uell}^0(n,1): x x^t =  1\}.$$
The transpose map $(\ )^t$ restricts to an antiautomorphism of ${\rm C\uell}^0(n,1)$. 
Let $y = x^t$. 
Then
$${\rm Spin}^+(n,1) = \{y \in {\rm C\uell}^0(2,1): y^t y =  1\}.$$

\vspace{-.15in}
\end{proof}


\section{The complex spin representations}

There is a natural embedding of ${\rm C\uell}(n)$ into $\complexnos\uell(n)$ that maps the standard basis vector $e_i$ of $\realnos^n$ to 
the same vector $e_i$ of $\complexnos^n$ for each $i = 1, \ldots, n$. 
The complex algebra $\complexnos\uell(n)$ is generated by $e_1,\ldots, e_n$ subject to the same relations as in ${\rm C\uell}(n)$. 
We identify ${\rm C\uell}(n)$ with the real subalgebra of $\complexnos\uell(n)$ generated by $e_1,\ldots, e_n$.

Assume that $n$ is even and let $n = 2m$ and $k = 2^m$.  
Let $\complexnos(k)$ be the algebra of complex $k \times k$ matrices. 
There is an isomorphism $\psi: \complexnos\uell(n) \to \complexnos(k)$ of complex algebras 
by Theorem I.4.3 of \cite{L-M}. 
The {\it complex spin representation} of ${\rm Spin}(n)$ is the faithful representation 
$$\Delta_n: {\rm Spin}(n) \to \complexnos(k)$$
obtained by restricting $\psi: \complexnos\uell(n) \to \complexnos(k)$. 
The algebra $\complexnos(k)$ is central simple, and so
every automorphism of  $\complexnos(k)$ is an inner automorphism by the Skolem-Noether theorem.  
Hence $\Delta_n$ is uniquely defined up to conjugation in $\complexnos(k)$. 
Let ${\rm tr}: \complexnos(k) \to \complexnos$ be the trace map. 
Then ${\rm tr}\Delta_n: {\rm Spin}(n) \to \complexnos$ does not depend 
on the choice of the isomorphism $\psi: \complexnos\uell(n) \to \complexnos(k)$. 

Let $\omega = e_1\cdots e_n$. 
Then $e_i\omega = -\omega e_i$ for each $i=1,\ldots, n$ by Proposition I.3.3 of \cite{L-M}. 
Hence $\omega$ commutes with every element of $\complexnos\uell^0(n)$. 
Therefore $\omega$ is in the center of ${\rm Spin}(n)$. 
We have that $({\bf i}^m \omega)^2 = 1$ by Formula I.5.14 of \cite{L-M}. 
Define a matrix $C$ in $\complexnos(k)$ by $C = {\bf i}^m\Delta_n(\omega)$. 
Then we have
$$C^2 = (-1)^m\Delta_n(\omega^2) = \Delta_n((-1)^m\omega^2) = \Delta_n(1) = I.$$
Let $W^+$ and $W^-$ be the $+1$ and $-1$ eigenspaces of the matrix $C$. 
Then 
$$\complexnos^k = W^+\oplus W^-.$$
The elements of $W^+$ and $W^-$ are called the {\it positive} and {\it negative Weyl spinors}. 
We have that $\dim W^+ = \dim W^-$ by Proposition(p.\,22) of \cite{F}. 
As $\omega$ is in the center of ${\rm Spin}(n)$, the matrix $C$ commutes with 
every matrix in the image of $\Delta_n$. 
Hence every matrix in the image of $\Delta_n$ leaves both $W^+$ and $W^-$ invariant. 
Therefore we have complex representations $\Delta_n^+$ and $\Delta_n^-$ of ${\rm Spin}(n)$ into ${\rm GL}(W^+)$ and ${\rm GL}(W^-)$, respectively,  
such that 
$$\Delta_n = \Delta_n^+\oplus \Delta_n^-.$$
The representations $\Delta_n^+$ and $\Delta_n^-$ are called the {\it positive} and {\it negative complex spin representations} of ${\rm Spin}(n)$.  
The representations $\Delta_n^+$ and $\Delta_n^-$ are inequivalent irreducible  complex representations of ${\rm Spin}(n)$ by Proposition I.5.15 of \cite{L-M}.

For each positive integer $n$, there is a natural embedding of ${\rm C\uell}(n,1)$ into $\complexnos\uell(n,1)$ 
that maps the standard basis vector $e_i$ of $\realnos^{n+1}$ to 
the same vector $e_i$ of $\complexnos^{n+1}$ for each $i = 1, \ldots, n+1$. 
The complex algebra $\complexnos\uell(n,1)$ is generated by $e_1,\ldots, e_{n+1}$ subject to the same relations as in ${\rm C\uell}(n,1)$. 
We identify ${\rm C\uell}(n,1)$ with the real subalgebra of $\complexnos\uell(n,1)$ generated by $e_1,\ldots, e_{n+1}$. 

Assume that $n$ is even, and let $n = 2m$ and $k = 2^m$. Let $\omega = e_1\cdots e_n$. 
Define 
$$\rho^{\pm}: \complexnos\uell(n,1) \to \complexnos\uell(n)$$
by  $\rho^{\pm}(e_i) = e_i$ for $i=1,\ldots,n$ and $\rho^{\pm}(e_{n+1}) = \pm {\bf i}^m \omega$. 
We have that $(\pm {\bf i}^m \omega)^2 = 1$, 
and $e_i(\pm {\bf i}^m \omega) = -(\pm {\bf i}^m \omega)e_i$ for each $i=1,\ldots, n$. 
Therefore $\rho^\pm: \complexnos\uell(n,1) \to \complexnos\uell(n)$ is a homomorphism of complex algebras 
and a retraction for each choice of $\pm$. 

\begin{lemma}\label{L:rho+-} 
The algebra homomorphism $\rho^\pm$ maps $\complexnos\uell^0(n,1)$ isomorphically onto $\complexnos\uell(n)$ 
for each choice of $\pm$. 
\end{lemma}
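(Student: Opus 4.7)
The plan is a straightforward dimension count combined with a parity-decomposition argument. First I would note that $\complexnos\uell^0(n,1)$ has complex dimension $2^n$ (spanned by the products $e_{i_1}\cdots e_{i_k}$ of even length with $1\le i_1<\cdots<i_k\le n+1$), while $\complexnos\uell(n)$ also has complex dimension $2^n$. Therefore it suffices to show that the restriction of $\rho^\pm$ to $\complexnos\uell^0(n,1)$ is injective.

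Next I would decompose the even subalgebra as a direct sum of vector spaces
\[
\complexnos\uell^0(n,1)=\complexnos\uell^0(n)\oplus \complexnos\uell^1(n)\,e_{n+1},
\]
by splitting each basis element $e_{i_1}\cdots e_{i_k}$ of even length according to whether or not $e_{n+1}$ appears. Thus every element of $\complexnos\uell^0(n,1)$ has a unique expression $x+ye_{n+1}$ with $x\in\complexnos\uell^0(n)$ and $y\in\complexnos\uell^1(n)$. Applying $\rho^\pm$ gives
\[
\rho^\pm(x+ye_{n+1})=x\pm {\bf i}^m y\omega,
\]
where $\omega=e_1\cdots e_n$.

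The key observation is that because $n=2m$ is even, $\omega\in\complexnos\uell^0(n)$, and so multiplication on the right by $\omega$ preserves parity. In particular, $y\omega\in\complexnos\uell^1(n)$ whenever $y\in\complexnos\uell^1(n)$. Consequently the two summands $x$ and $\pm{\bf i}^m y\omega$ of $\rho^\pm(x+ye_{n+1})$ lie in the complementary subspaces $\complexnos\uell^0(n)$ and $\complexnos\uell^1(n)$ respectively. If $\rho^\pm(x+ye_{n+1})=0$, then by this parity decomposition $x=0$ and $y\omega=0$. Since $({\bf i}^m\omega)^2=1$, the element $\omega$ is invertible in $\complexnos\uell(n)$, and we conclude $y=0$. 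Hence $\rho^\pm$ restricted to $\complexnos\uell^0(n,1)$ is injective, and by the dimension count above it is an isomorphism onto $\complexnos\uell(n)$.

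The only point requiring a little care is the parity of $\omega$; this is where the hypothesis that $n$ is even is used, and without it the parity decomposition would not separate the two summands of $\rho^\pm(x+ye_{n+1})$. Given that observation, the argument is essentially a linear-algebra calculation, so no serious obstacle is expected.
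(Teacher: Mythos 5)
Your proof is correct and follows essentially the same route as the paper: the same decomposition $\complexnos\uell^0(n,1)=\complexnos\uell^0(n)\oplus\complexnos\uell^1(n)e_{n+1}$, the same use of the evenness and invertibility of $\omega$, and the same dimension count. The only cosmetic difference is that you establish injectivity of $\rho^\pm$ on the even subalgebra, whereas the paper establishes surjectivity onto $\complexnos\uell(n)$; either half, combined with the equality of dimensions, yields the isomorphism.
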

\begin{proof}
We have that 
$$\complexnos\uell^0(n,1) = \complexnos\uell^0(n) \oplus \complexnos\uell^1(n)e_{n+1}.$$
As $\omega$ is even, we have that 
$$\rho^\pm(\complexnos\uell^1(n)e_{n+1}) = \complexnos\uell^1(n)(\pm{\bf i}^m\omega) = \complexnos\uell^1(n).$$
Hence $\rho^\pm(\complexnos\uell^0(n,1)) = \complexnos\uell(n)$.  
As $\complexnos\uell^0(n,1)$ and $\complexnos\uell(n)$ are complex vector spaces of the same dimension, 
we deduce that $\rho^\pm$ maps $\complexnos\uell^0(n,1)$ isomorphically onto $\complexnos\uell(n)$ 
for each choice of $\pm$. 
\end{proof}

Let $\rho_0^\pm: \complexnos\uell^0(n,1) \to \complexnos\uell(n)$ be the isomorphism of complex algebras, 
obtained by restricting $\rho^\pm$.  From the proof of Lemma \ref{L:rho+-}, we have that $\rho_0^- = \alpha\rho_0^+$. 
The automorphism $\alpha$ of $\complexnos\uell(n)$ is equal to conjugation by $\omega$ by Proposition I.3.3 of  \cite{L-M}. 
Therefore $\rho_0^+$ and $\rho_0^-$ differ by an inner automorphism of $\complexnos\uell(n)$. 

The {\it complex spin representation} of ${\rm Spin}^+(n,1)$ is the faithful representation 
$$\Delta_{n,1}: {\rm Spin}^+(n,1) \to  \complexnos(k)$$
obtained by restricting $\psi\rho^\pm: \complexnos\uell(n,1)\to \complexnos(k)$ for some choice of $\pm$ 
and some choice of an isomorphism $\psi: \complexnos\uell(n) \to \complexnos(k)$ 
of complex algebras. 
The representation $\Delta_{n,1}: {\rm Spin}^+(n,1) \to  \complexnos(k)$ is uniquely defined up to conjugation in $\complexnos(k)$. 
Hence the map 
$${\rm tr}\Delta_{n,1}: {\rm Spin}^+(n,1) \to \complexnos$$ 
does not depend on any of the choices made to define $\Delta_{n,1}$. 

Finally, the complex spin representation $\Delta_{n,1}: {\rm Spin}^+(n,1) \to \complexnos(k)$ restricts to 
the complex spin representation $\Delta_n: {\rm Spin}(n) \to \complexnos(k)$ 
assuming of course that we are using the same isomorphism $\psi: \complexnos\uell(n) \to \complexnos(k)$ 
to define both $\Delta_n$ and $\Delta_{n,1}$. 


\section{The complex spin representation of ${\rm Spin}^+(2,1)$}

Consider the matrices $E_1, E_2, E_3$ in $\complexnos(2)$ which are, respectively,  
$$\left(\begin{array}{cc} 0 & 1 \\ -1 & 0 \end{array}\right), \left(\begin{array}{cc} 0 & {\bf i} \\ {\bf i} & 0 \end{array}\right), 
\left(\begin{array}{cc} 1 & 0 \\ 0 & -1 \end{array}\right). $$
We have that $E^2_1 = -I, E^2_2 = -I$, and $E_1E_2 = -E_2E_1$.  
Hence the mapping $\psi:\{e_1,e_2\} \to \{E_1,E_2\}$, defined by $\psi(e_1) = E_1$ and $\psi(e_2) = E_2$, 
extends to a homomorphism $\psi: {\complexnos\uell}(2) \to \complexnos(2)$ of complex algebras. 
The matrices $I, E_1, E_2, E_1E_2$ are linearly independent in $\complexnos(2)$, 
and so $\psi: {\complexnos\uell}(2) \to \complexnos(2)$ is an isomorphism of complex algebras.

We shall work with the complex spin representation $\Delta: {\rm Spin}^+(2,1) \to \complexnos(2)$ which is the restriction 
of the homomorphism $\psi\rho^{-}: \complexnos\uell(2,1) \to \complexnos(2)$ of complex algebras. 
Note that $\psi\rho^-(e_i) = E_i$ for $i=1,2,3$. 

The conformal disk model  of the hyperbolic plane is 
$$B^2 = \{z\in \complexnos: |z| < 1\}.$$
The group of orientation preserving isometries of $B^2$ is the group ${\rm LF}(B^2)$ of linear fractional transformations of $\complexnos$ of 
the form $(az+ b)/(\overline b z +\overline a )$ 
with $a, b$ in $\complexnos$ and $|a|^2-|b|^2 = 1$. 
Define a group $G$ by
$$G = \left\{\left(\begin{array}{rr} a & b \\ \overline{b} & \overline{a} \end{array}\right): a, b \in \complexnos\ \ \hbox{and}\ \ |a|^2-|b|^2 = 1\right\}.$$
The natural map from $G$ to ${\rm LF}(B^2)$ is a double covering epimorphism. 

If $A$ is in $\complexnos(2)$, let $A^\ast$ be the conjugate transpose of $A$.  Let $J = E_3$. 

\begin{lemma}\label{L:preserveJ} 
Let $A$ be in $\complexnos(2)$ with row vectors $(a, b)$ and $(c, d)$.  Then $A^\ast J A = J$
if and only if  $|a|^2-|c|^2 = 1, |d|^2-|b|^2 = 1$, and $\overline{b}a = \overline{d}c$. 
\end{lemma}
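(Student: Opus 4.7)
The plan is entirely a direct matrix computation. Writing
\[
A=\begin{pmatrix} a & b \\ c & d \end{pmatrix},\qquad A^{\ast}=\begin{pmatrix} \overline{a} & \overline{c} \\ \overline{b} & \overline{d} \end{pmatrix},\qquad J=\begin{pmatrix} 1 & 0 \\ 0 & -1 \end{pmatrix},
\]
I first multiply $J$ into $A$ (which simply negates the second row of $A$), and then multiply by $A^{\ast}$ on the left. The result is
\[
A^{\ast}JA=\begin{pmatrix} |a|^{2}-|c|^{2} & \overline{a}b-\overline{c}d \\ \overline{b}a-\overline{d}c & |b|^{2}-|d|^{2} \end{pmatrix}.
\]

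Next I equate this matrix entry by entry with $J$. The $(1,1)$ entry yields $|a|^{2}-|c|^{2}=1$, and the $(2,2)$ entry yields $|b|^{2}-|d|^{2}=-1$, i.e.\ $|d|^{2}-|b|^{2}=1$. The $(2,1)$ entry yields $\overline{b}a=\overline{d}c$, and the $(1,2)$ entry yields $\overline{a}b=\overline{c}d$, which is just the complex conjugate of the $(2,1)$ equation and hence redundant. This gives the ``only if'' direction.

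For the ``if'' direction, I simply observe that the same computation, read backwards, shows that the three stated conditions force all four entries of $A^{\ast}JA$ to agree with those of $J$. There is no genuine obstacle here; the only thing to notice is the redundancy of the two off-diagonal equations, which is why the statement lists only one of them. The whole argument is a half-page bookkeeping exercise and can be written essentially as the single displayed matrix identity above followed by comparison of entries.
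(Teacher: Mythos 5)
Your proof is correct and is essentially identical to the paper's: both compute $A^\ast JA$ explicitly as $\left(\begin{smallmatrix} |a|^2-|c|^2 & \overline{a}b-\overline{c}d \\ \overline{b}a-\overline{d}c & |b|^2-|d|^2\end{smallmatrix}\right)$ and equate entries with $J$, the two off-diagonal conditions being complex conjugates of each other. Nothing further is needed.
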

\begin{proof} The equation $J = A^\ast J A$ is equivalent to the equation
$$ \left(\begin{array}{cc} 1 & 0 \\ 0 & -1\end{array}\right)  =
\left(\begin{array}{cc} \overline{a} & \overline{c} \\ \overline{b} & \overline{d} \end{array}\right)\left(\begin{array}{cc} a & b \\ -c & -d\end{array}\right) = 
\left(\begin{array}{cc} |a|^2-|c|^2 & \overline{a}b - \overline{c}d \\ \overline{b}a - \overline{d}c & |b|^2-|d|^2 \end{array}\right).$$

\vspace{-.2in}
\end{proof}

\begin{lemma} \label{L:SU11} 
We have that
$$G = {\rm SU}(1,1;\complexnos) = \{A \in \complexnos(2): A^\ast J A = J\ \ \hbox{and}\ \ \det A = 1\}.$$
\end{lemma}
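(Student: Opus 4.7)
The statement asserts the equality of three sets: the group $G$ of matrices in the explicit block form with $|a|^2-|b|^2=1$; the notational target ${\rm SU}(1,1;\complexnos)$; and the set of $2\times 2$ complex matrices $A$ satisfying the two conditions $A^\ast J A = J$ and $\det A = 1$. The third of these characterizations is the substantive one, so the plan is to prove this set equals $G$ by two inclusions. The second equality is a matter of definition of ${\rm SU}(1,1;\complexnos)$, so I will just note it.

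For the forward inclusion $G \subseteq \{A : A^\ast JA = J,\ \det A = 1\}$, I take an arbitrary $A = \begin{pmatrix} a & b \\ \overline{b} & \overline{a}\end{pmatrix}$ with $|a|^2 - |b|^2 = 1$ and check each condition directly. The determinant is $a\overline{a} - b\overline{b} = |a|^2 - |b|^2 = 1$. For the $J$-orthogonality relation, I apply Lemma \ref{L:preserveJ} with $(c,d) = (\overline{b},\overline{a})$: the three conditions $|a|^2 - |c|^2 = 1$, $|d|^2 - |b|^2 = 1$, and $\overline{b}a = \overline{d}c$ all reduce immediately to $|a|^2 - |b|^2 = 1$ (the last since $\overline{d}c = a\overline{b}$, whose conjugate equals $\overline{b}a$).

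For the reverse inclusion, I suppose $A$ satisfies $A^\ast J A = J$ and $\det A = 1$, and I want to show that $A$ has the block form defining $G$. The cleanest route is to exploit that $J^2 = I$: from $A^\ast J A = J$, I obtain $A^{-1} = J A^\ast J$. I compute this matrix explicitly, giving
\[
A^{-1} = \begin{pmatrix} \overline{a} & -\overline{c} \\ -\overline{b} & \overline{d}\end{pmatrix}.
\]
On the other hand, since $\det A = 1$, the classical adjugate formula yields $A^{-1} = \begin{pmatrix} d & -b \\ -c & a\end{pmatrix}$. Comparing entries forces $d = \overline{a}$ and $c = \overline{b}$, placing $A$ in the block form defining $G$. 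Finally, the condition $|a|^2 - |b|^2 = 1$ follows either from $\det A = 1$ or from the first Lemma \ref{L:preserveJ} identity $|a|^2 - |c|^2 = 1$ together with $c = \overline{b}$.

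There is no real obstacle here; the only subtlety is choosing a presentation that avoids clumsy case analysis. The identity $A^{-1} = J A^\ast J$ is the key trick that makes the reverse inclusion transparent, bypassing any need to solve the three nonlinear relations of Lemma \ref{L:preserveJ} directly.
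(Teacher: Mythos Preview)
Your proof is correct. For the reverse inclusion the paper takes a slightly different route: it simply instructs the reader to solve the three relations of Lemma~\ref{L:preserveJ} together with $ad-bc=1$ for $c$ and $d$, arriving at $c=\overline{b}$, $d=\overline{a}$, and $|a|^2-|b|^2=1$. Your approach---deriving $A^{-1}=JA^\ast J$ from $A^\ast JA=J$ and comparing it entrywise with the adjugate formula for $A^{-1}$---is cleaner, since it replaces a nonlinear system with a linear comparison. It also anticipates structure used later: the antiautomorphism $\sigma(A)=JA^\ast J$ you have implicitly introduced is exactly what the paper deploys in the proof of Theorem~\ref{thm:5.3}. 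One small quibble in your forward inclusion: with $c=\overline{b}$ and $d=\overline{a}$ the third condition $\overline{b}a=\overline{d}c$ becomes $\overline{b}a=a\overline{b}$, which holds directly by commutativity of $\complexnos$; your remark ``whose conjugate equals $\overline{b}a$'' is both unnecessary and slightly off (the conjugate of $a\overline{b}$ is $\overline{a}b$).
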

\begin{proof}
Solving the system of equations in Lemma \ref{L:preserveJ}, together with $ad - bc = 1$, for $c$ and $d$, 
leads to the system of equations $c = \overline{b}, d = \overline{a}$, and $|a|^2 - |b|^2 = 1$. 
\end{proof}

\begin{theorem}\label{thm:5.3} 
The complex spin representation $\Delta: {\rm Spin}^+(2,1) \to \complexnos(2)$ maps ${\rm Spin}^+(2,1)$ isomorphically onto $G$. 
\end{theorem}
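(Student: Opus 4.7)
The plan is to carry out a direct coordinate computation on the four-dimensional real algebra ${\rm C\uell}^0(2,1)$ using the basis $\{1,\ e_1e_2,\ e_1e_3,\ e_2e_3\}$. For a general real element $x$, I will compute both $\Delta(x)$ and $x^tx$, show that the resulting $2\times 2$ matrix automatically has the ``$G$-shape'' $\bigl(\begin{smallmatrix}a&b\\\overline b&\overline a\end{smallmatrix}\bigr)$, and show that the Clifford norm condition $x^tx=1$ coincides with the determinant condition $\det\Delta(x)=1$. Then Lemma \ref{L:spin234} and Lemma \ref{L:SU11} combine to give the desired isomorphism.

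For the first step, since $\rho^-(e_3)=-{\bf i}\,e_1e_2$, we have $\psi\rho^-(e_1e_2)=E_1E_2=\mathrm{diag}({\bf i},-{\bf i})$, while $\psi\rho^-(e_1e_3)=E_1\cdot(-{\bf i}E_1E_2)=-{\bf i}E_1^2E_2={\bf i}E_2$ and a similar calculation gives $\psi\rho^-(e_2e_3)=-{\bf i}E_1$. Writing $x=\alpha+\beta e_1e_2+\gamma e_1e_3+\delta e_2e_3$ with $\alpha,\beta,\gamma,\delta\in\realnos$ and assembling, one obtains
$$\Delta(x)=\begin{pmatrix}\alpha+{\bf i}\beta & -\gamma-{\bf i}\delta \\ -\gamma+{\bf i}\delta & \alpha-{\bf i}\beta\end{pmatrix},$$
which is already of the form required by Lemma \ref{L:SU11}, with $a=\alpha+{\bf i}\beta$ and $b=-\gamma-{\bf i}\delta$. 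In particular, the image of $\Delta$ is automatically contained in the set of matrices of the required shape.

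For the second step, set $u=e_1e_2$, $v=e_1e_3$, $w=e_2e_3$; these pairwise anticommute and satisfy $u^2=-1$, $v^2=w^2=1$. Since transposition fixes $e_i$ and reverses products, $x^t=\alpha-\beta u-\gamma v-\delta w$. The $\alpha$-linear terms in the product $x^tx$ cancel trivially, the three cross terms like $-\beta\gamma(uv+vu)$ cancel by anticommutativity, and only the diagonal squares survive, giving
$$x^tx=-\beta^2u^2-\gamma^2v^2-\delta^2w^2+\alpha^2=\alpha^2+\beta^2-\gamma^2-\delta^2=|a|^2-|b|^2=\det\Delta(x).$$

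With these two identities in hand, the theorem drops out. By Lemma \ref{L:spin234}, $x\in{\rm Spin}^+(2,1)$ iff $x^tx=1$, hence iff $\det\Delta(x)=1$, hence by Lemma \ref{L:SU11} iff $\Delta(x)\in G$. Injectivity of $\Delta$ is automatic, since $\psi\rho^-$ restricts to an algebra isomorphism $\complexnos\uell^0(2,1)\to\complexnos(2)$ by Lemma \ref{L:rho+-} and the construction of $\psi$. Surjectivity is obtained by reversing the parametrization: given any $A\in G$, reading off $\alpha,\beta,\gamma,\delta$ from the entries $a=\alpha+{\bf i}\beta$ and $b=-\gamma-{\bf i}\delta$ defines an element $x\in{\rm C\uell}^0(2,1)$ with $x^tx=1$ and $\Delta(x)=A$. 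The only point requiring care is bookkeeping of the signs and the factor $-{\bf i}$ coming from $\rho^-(e_3)$; there is no substantive obstacle beyond this.
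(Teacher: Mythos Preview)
Your proof is correct. Both your argument and the paper's rest on the same key input, Lemma~\ref{L:spin234}, and both amount to computing the image of ${\rm C\uell}^0(2,1)$ under $\Delta$ and matching the condition $x^tx=1$ with membership in $G$.

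The difference is one of style. You parametrize ${\rm C\uell}^0(2,1)$ by four real coordinates and compute $\Delta(x)$ and $x^tx$ by brute force. The paper instead observes that the Clifford transpose corresponds, under $\phi=\psi\rho^-$, to the antiautomorphism $\sigma(A)=JA^\ast J$ of $\complexnos(2)$: one checks $\phi(e_i^t)=\sigma(\phi(e_i))$ on generators, whence $\phi(\ )^t=\sigma\phi$ globally, and then ${\rm Spin}^+(2,1)$ maps onto $\{A\in\phi({\rm C\uell}^0(2,1)):\sigma(A)A=I\}$, which is seen to equal $G$ via Lemma~\ref{L:preserveJ}. Your direct computation is perfectly adequate here and arguably more transparent for a four-dimensional algebra; the paper's structural identification has the advantage that it transports verbatim to the proof of Theorem~\ref{T:Ghat} for ${\rm Spin}^+(4,1)$, where a coordinate expansion in sixteen real parameters would be unpleasant. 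One small quibble: your appeal to Lemma~\ref{L:SU11} is not really needed, since once you know $\Delta(x)$ has the shape $\bigl(\begin{smallmatrix}a&b\\\overline b&\overline a\end{smallmatrix}\bigr)$, the condition $|a|^2-|b|^2=1$ is the \emph{definition} of $G$, not a consequence of identifying $G$ with ${\rm SU}(1,1;\complexnos)$.
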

\begin{proof}
As $\Delta$ is faithful, it suffices to prove that $\Delta$ maps ${\rm Spin}^+(2,1)$ onto $G$. 
Let $\phi: {\rm C}\uell(2,1) \to \complexnos(2)$ be the restriction of $\psi\rho^-:\complexnos\uell(2,1) \to \complexnos(2)$. 
Then we have that 
$$\Delta({\rm Spin}^+(2,1)) = \phi({\rm Spin}^+(2,1)).$$
The real algebra $\complexnos(2)$ is an 8-dimensional real vector space. 
The eight products $E_{i_1}\cdots E_{i_k}$ with $1\leq i_1< \cdots < i_k\leq 3$, including the empty product, are linearly independent over $\realnos$. 
Hence $\phi: {\rm C\uell}(2,1) \to \complexnos(2)$ is an isomorphism of real algebras. 

By Lemma \ref{L:spin234}, we have that
$${\rm Spin}^+(2,1) = \{x \in {\rm C\uell}^0(2,1): x^t x =  1\}.$$
The map $\sigma: \complexnos(2) \to \complexnos(2)$ defined by $\sigma(A) = JA^\ast J$ 
is an antiautomorphism of the real algebra $\complexnos(2)$.  
If $A$ has row vectors $(a,b)$ and $(c,d)$, then 
$$\sigma(A) = \left(\begin{array}{rr} \overline{a} & -\overline{c} \\ -\overline{b} & \overline{d} \end{array}\right).$$
The maps $\phi(\ )^t, \sigma\phi: {\rm C\uell}(2,1) \to \complexnos(2)$ are antihomomorphisms of real algebras.
For each $i = 1,2,3$, we have that
$$\phi(e_i^t) = \phi(e_i) = E_i = \sigma(E_i) = \sigma\phi(e_i).$$
Therefore $\phi(\  )^t = \sigma\phi$. 
Hence 
$$\phi({\rm Spin}^+(2,1)) = \{A \in \phi({\rm C}\uell^0(2,1)): \sigma(A)A= I\}.$$
We have that
$$\phi({\rm C\uell}^0(2,1)) =  \left\{\left(\begin{array}{rr} a & b \\ \overline{b} & \overline{a} \end{array}\right): a, b \in \complexnos\right\}.$$
Therefore $\phi({\rm Spin}^+(2,1)) = G$ by Lemmas \ref{L:rho+-}  and \ref{L:preserveJ}. 
\end{proof}

As $\phi: {\rm C}\uell(2,1) \to \complexnos(2)$ is an isomorphism of real algebras, 
all the algebraic structure of  ${\rm C}\uell(2,1)$ is carried over to $\complexnos(2)$ by $\phi$. 
Hence the matrices $E_1,E_2, E_3$ span a 3-dimensional real vector subspace $W$ of $\complexnos(2)$
given by 
$$W = \left\{\left(\begin{array}{rr} r & z \\ -\overline{z} & -r \end{array}\right): r \in \realnos,\ z \in \complexnos\right\}.$$
Let $B$ be in $W$ with first row vector $(r,z)$. 
Define a quadratic form $f$ on $W$ by
$$f(B) = |z|^2 - r^2.$$
Let $f_2$ be the Lorentzian quadratic form on $\realnos^{2,1}$. 
Then $\phi$ restricts to an isometry $\delta:(\realnos^3, f_2) \to (W,f)$. 

Let $A$ be in $G$ and $B$ be in $W$.  Then $ABA^{-1}$ is in $W$. 
Define ${\rm Ad}_A: W \to W$ by ${\rm Ad}_A(B) = ABA^{-1}$. 
Then ${\rm Ad}_A: W \to W$ is in ${\rm O}(W,f)$. 
Define a homomorphism ${\rm Ad}: G \to {\rm O}(W, f)$ by ${\rm Ad}(A) = {\rm Ad}_A$. 

Define an isomorphism $\delta_\ast: {\rm O}(2,1) \to  {\rm O}(W,f)$ by $\delta_\ast(T) = \delta T \delta^{-1}$. 
Then $\delta_\ast({\rm SO}(2,1)) = {\rm SO}(W,f)$.
Define ${\rm SO}^+(W,f)$ to be the connected component of ${\rm SO}(W,f)$ containing the identity. 
Then $\delta_\ast({\rm SO}^+(2,1)) = {\rm SO}^+(W,f)$, since $\delta_\ast$ is a homeomorphism. 
We have that ${\rm Ad}(G) = {\rm SO}^+(W,f)$ and 
the following diagram commutes
$$\begin{array}{ccc} 
{\rm Spin}^+(2,1) & {\buildrel \Delta \over \longrightarrow} & G \\
{\rm Ad} \downarrow  &  & \phantom{{\rm Ad}}\downarrow {\rm Ad}  \\
{\rm SO}^+(2,1) & {\buildrel \delta_\ast \over \longrightarrow} & {\rm SO}^+(W,f).
\end{array}$$

Define $\eta: G \to {\rm SO}^+(2,1)$ by $\eta = \delta_\ast^{-1}{\rm Ad}$. 
Then $\eta$ is a double covering epimorphism. 
We regard ${\rm SO}^+(2,1)$ to be a matrix group. 
If $A$ is in $G$, then $\eta(A)$ is the matrix of the isometry ${\rm Ad}(A)$ of $(W,f)$ with respect to the 
basis $E_1,E_2, E_3$. 

If $z\in \complexnos$, we write $z = z_1+ z_2{\bf i}$ with $z_1, z_2\in\realnos$; in this notation we compute that
$$\eta\left(\begin{array}{rr} a & b \\ \overline{b} & \overline{a} \end{array}\right) = 
\left(\begin{array}{rrr} 1-2a_2^2+2b_1^2 & -2a_1 a_2+2 b_1 b_2 & -2 a_1 b_1 + 2 a_2 b_2 \\ 
2 a_1 a_2 + 2 b_1 b_2&1 -2a_2^2 + 2 b_2^2 & -2 a_1 b_2 - 2a_2 b_1 \\ 
-2a_1b_1-2a_2b_2 & -2a_1b_2 + 2 a_2 b_1 & 1 + 2 b_1^2 + 2 b_2^2 \end{array}\right).$$

\begin{theorem}\label{thm:5.4} 
The complex spin representation $\Delta:{\rm Spin}^+(2,1) \to \complexnos(2)$ maps ${\rm Spin}(2)$ onto the group 
$$H = \left\{\left(\begin{array}{cc} a & 0 \\ 0 & \overline{a} \end{array}\right): a \in \complexnos\ \ \hbox{and}\ \ |a| = 1\right\}.$$
\end{theorem}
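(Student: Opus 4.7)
The plan is to use Theorem~\ref{thm:5.3} to identify $\Delta({\rm Spin}(2))$ as a subset of $G$, then characterize which elements of $G$ lie in the image of $\widehat{\rm Spin}(2)$ by exploiting the explicit formula for $\eta: G \to {\rm SO}^+(2,1)$ given just above the statement.

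First, recall that under the identification of ${\rm Spin}(n)$ with the subgroup $\widehat{\rm Spin}(n) \subset {\rm Spin}^+(n,1)$ that stabilizes $e_{n+1}$ under conjugation in ${\rm C}\uell(n,1)$, the map ${\rm Ad}: {\rm Spin}^+(2,1) \to {\rm SO}^+(2,1)$ sends $\widehat{\rm Spin}(2)$ onto the subgroup $\widehat{\rm SO}(2)$ of matrices fixing $e_3$. Since $\eta = \delta_\ast^{-1}{\rm Ad}$ and the commutative diagram just before the theorem shows $\eta \circ \Delta = {\rm Ad}$ on ${\rm Spin}^+(2,1)$ (after identifying ${\rm SO}^+(2,1)$ with ${\rm SO}^+(W,f)$ via $\delta_\ast$), an element $A \in G$ lies in $\Delta({\rm Spin}(2))$ if and only if $\eta(A)$ fixes $e_3$, that is, if and only if the third column of $\eta(A)$ equals $(0,0,1)^T$.

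Next, I would read off the third column of $\eta(A)$ from the explicit formula:
$$\eta\!\left(\begin{array}{rr} a & b \\ \overline{b} & \overline{a} \end{array}\right)\! e_3 = \left(\begin{array}{c} -2 a_1 b_1 + 2 a_2 b_2 \\ -2 a_1 b_2 - 2 a_2 b_1 \\ 1 + 2 b_1^2 + 2 b_2^2 \end{array}\right).$$
Setting this equal to $e_3$, the third coordinate forces $b_1^2+b_2^2 = 0$, hence $b = 0$. The first two coordinates are then automatically zero, and the constraint $|a|^2 - |b|^2 = 1$ defining $G$ reduces to $|a| = 1$. Thus $\Delta({\rm Spin}(2)) \subseteq H$.

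For the reverse inclusion, any $A = \mathrm{diag}(a, \overline{a})$ with $|a|=1$ clearly lies in $G$, and a direct substitution into the formula for $\eta$ (with $b=0$) produces a matrix whose third column is $e_3$, so $\eta(A) \in \widehat{\rm SO}(2)$ and hence $\Delta^{-1}(A) \in \widehat{\rm Spin}(2) = {\rm Spin}(2)$. Combining the two inclusions gives $\Delta({\rm Spin}(2)) = H$. There is no real obstacle here; the only point requiring care is the verification that $\widehat{\rm Spin}(2) \subset {\rm Spin}^+(2,1)$ is exactly the conjugation-stabilizer of $e_3$, which is built into the paper's conventions from Section~3.
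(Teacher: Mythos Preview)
Your proof is correct. Both you and the paper start from the characterization of $\widehat{\rm Spin}(2)$ as the stabilizer of $e_3$ under the conjugation action, but you diverge in how you translate this into a condition on $A \in G$. The paper works directly with the conjugation action of $G$ on $W$: since $\phi(e_3) = E_3 = J$, the stabilizer condition $AE_3A^{-1}=E_3$ becomes $A^{-1}JA = J = A^\ast J A$, i.e.\ $A^{-1}=A^\ast$, and then the explicit form of $A^{-1}=JA^\ast J$ forces $b=0$. You instead push the condition down to ${\rm SO}^+(2,1)$ via $\eta$ and read off the third column of $\eta(A)$ from the explicit $3\times 3$ matrix formula, obtaining $b_1^2+b_2^2=0$ directly. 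The paper's route has the advantage of yielding the conceptually clean intermediate statement that $\Delta({\rm Spin}(2))$ is exactly the unitary part of $G$ (the matrices with $A^{-1}=A^\ast$), and it avoids relying on the somewhat cumbersome explicit formula for $\eta$; your route is more mechanical but entirely self-contained once that formula is in hand. Either way the computation is short, and your argument goes through without gaps.
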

\begin{proof}
The group ${{\rm Spin}}(2)$ is the stabilizer of $e_3$ under the action of ${\rm Spin}^+(2,1)$ on $\realnos^3$ by conjugation. 
Therefore $\Delta({{\rm Spin}}(2)) = \phi({{\rm Spin}}(2))$ is the stabilizer of $E_3$ under the action of $G$ on $W$ by conjugation. 

Suppose $A$ is in $G$.  As $E_3 = J$, we have 
that $AE_3A^{-1} = E_3$ if and only if $A^{-1}J A = J = A^\ast JA$, 
and so $AE_3A^{-1} = E_3$ if and only if $A^{-1} = A^\ast$. 

Now suppose that $A^{-1} = A^\ast$.  Let $A$ have row vectors $(a,b)$ and $(\overline{b},\overline{a})$. 
As $A^{-1} = JA^\ast J$, we have 
$$\left(\begin{array}{rr} \overline{a} & -b \\ -\overline{b} & a \end{array}\right) = 
\left(\begin{array}{rr} \overline{a} & b \\ \overline{b} & a \end{array}\right).$$
Hence $b = 0$. Therefore $A$ is in $H$. 

Conversely, suppose $A$ is in $H$.  
Then $A$ is in $G$ by Lemma \ref{L:preserveJ}. 
Moreover $A^{-1} = A^\ast$, and so $A^{-1}JA = J$.  Therefore $A$ stabilizes $J = E_3$. 
\end{proof}


\section{The complex spin representation of ${\rm Spin}^+(4,1)$} 

Let $\quaternions$ be the ring of quaternions. 
Every element of $\quaternions$ can be written in the form $a + b{\bf j}$ for unique $a, b$ in $\complexnos$.
There is a monomorphism $\Psi_1: \quaternions \to \complexnos(2)$ of real algebras defined by 
$$\Psi_1(a + b{\bf j}) = \left(\begin{array}{rr} a & b \\ -\overline{b} & \overline{a} \end{array}\right).$$
Let $\quaternions(2)$ be the algebra of $2\times 2$ matrices over $\quaternions$.  
There is a monomorphism $\Psi_2: \quaternions(2) \to \complexnos(4)$ of real algebras defined by 
$$\Psi_2 \left(\begin{array}{rr} a & b \\ c & d \end{array}\right) = \left(\begin{array}{rr} \Psi_1(a) & \Psi_1(b) \\ \Psi_1(c)  & \Psi_1(d) \end{array}\right).$$
If $A$ is a matrix in $\quaternions(2)$, define $\hat A = \Psi_2(A)$. 

Consider the matrices $E_1, \ldots, E_5$ in $\quaternions(2)$ which are, respectively,  
$$\left(\begin{array}{cc} 0 & 1 \\ -1 & 0 \end{array}\right), \left(\begin{array}{cc} 0 & {\bf i} \\ {\bf i} & 0 \end{array}\right), 
\left(\begin{array}{cc} 0 & {\bf j} \\ {\bf j }& 0 \end{array}\right),   \left(\begin{array}{cc} 0 & {\bf k} \\ {\bf k} & 0 \end{array}\right), 
\left(\begin{array}{cc} 1 & 0 \\ 0 & -1 \end{array}\right). $$
We have that $E^2_i = -I$ for $i =1,\ldots, 4$. 
Moreover $E_iE_j = -E_jE_i$ for each $i \neq j$. 
Hence the mapping $\psi:\{e_1,\ldots, e_4\} \to \{\hat E_1,\ldots, \hat E_4\}$ defined by $\psi(e_i) = \hat E_i$ 
extends to a homomorphism $\psi: {\complexnos\uell}(4) \to \complexnos(4)$ of complex algebras. 

The algebra $\complexnos(4)$ is a 16 dimensional complex vector space. 
The 16 products $\hat E_{i_1}\cdots \hat E_{i_k}$ with $1\leq i_1< \cdots < i_k\leq 4$, including the empty product, are linearly independent over $\complexnos$. 
Hence $\psi: {\complexnos\uell}(4) \to \complexnos(4)$ is an isomorphism of complex algebras. 

We shall work with the complex spin representation $\Delta: {\rm Spin}^+(4,1) \to \complexnos(4)$ which is the restriction 
of the homomorphism $\psi\rho^+: \complexnos\uell(4,1) \to \complexnos(4)$ of complex algebras. 
Note that $\psi\rho^+(e_i) = \hat E_i$ for $i = 1, \ldots, 5$. 

\begin{lemma} \label{L:quat} 
The complex spin representation $\Delta: {\rm Spin}^+(4,1) \to \complexnos(4)$ factors through $\quaternions(2)$, 
that is, there is a homomorphism $\overline \Delta: {\rm Spin}^+(4,1) \to \quaternions(2)$ such that $\Delta = \Psi_2 \overline\Delta$. 
\end{lemma}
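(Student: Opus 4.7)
The plan is to show that the image $\Delta({\rm Spin}^+(4,1))$ already lies inside the real subalgebra $\Psi_2(\quaternions(2))$ of $\complexnos(4)$, and then to define $\overline\Delta$ as $\Psi_2^{-1}\circ\Delta$, which makes sense because $\Psi_2$ is injective. Since $\Psi_2$ is a monomorphism of real algebras, $\Psi_2(\quaternions(2))$ is a real subalgebra of $\complexnos(4)$, so it is closed under sums, real scalar multiples, and products.

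The first key observation is that each generator $\hat E_i = \psi\rho^+(e_i)$, for $i=1,\ldots,5$, is by construction equal to $\Psi_2(E_i)$ with $E_i\in\quaternions(2)$, so $\hat E_i\in\Psi_2(\quaternions(2))$. Next I would invoke Lemma \ref{L:spin+} (or equivalently the definition of the spin group), which expresses any element of ${\rm Spin}^+(4,1)$ as a product $v_1\cdots v_{2k}$ of vectors $v_i\in\realnos^{4,1}$. Writing each $v_i = \sum_{j=1}^{5}c_{ij}e_j$ with $c_{ij}\in\realnos$, I get
\begin{equation*}
\psi\rho^+(v_i) \;=\; \sum_{j=1}^5 c_{ij}\,\hat E_j,
\end{equation*}
which is a real linear combination of elements of $\Psi_2(\quaternions(2))$, hence itself in $\Psi_2(\quaternions(2))$. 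Multiplying these together stays inside the subalgebra, so $\Delta(v_1\cdots v_{2k})=\psi\rho^+(v_1)\cdots\psi\rho^+(v_{2k})\in\Psi_2(\quaternions(2))$.

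With the inclusion $\Delta({\rm Spin}^+(4,1))\subseteq\Psi_2(\quaternions(2))$ in hand, I would define
\begin{equation*}
\overline\Delta \;:=\; \Psi_2^{-1}\circ\Delta\colon {\rm Spin}^+(4,1) \longrightarrow \quaternions(2),
\end{equation*}
which is well-defined and a homomorphism because $\Psi_2$ is an injective homomorphism of real algebras and $\Delta$ is a homomorphism. By construction $\Delta=\Psi_2\overline\Delta$, which is exactly the factorization claimed.

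There is no real obstacle here; the only thing that must be verified carefully is that the coefficients $c_{ij}$ coming from writing a vector in $\realnos^{4,1}$ are indeed \emph{real}, so that real-linear combinations of the matrices $\hat E_j\in\Psi_2(\quaternions(2))$ remain in the real subalgebra $\Psi_2(\quaternions(2))$ (a complex combination would in general leave this real subalgebra). This is automatic since we start from ${\rm Spin}^+(4,1)\subseteq {\rm C}\uell^0(4,1)$ rather than from $\complexnos\uell^0(4,1)$.
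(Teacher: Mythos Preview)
Your proof is correct and is essentially the paper's argument. The paper phrases it slightly more directly by constructing a real algebra homomorphism $\overline\psi:{\rm C}\uell(4)\to\quaternions(2)$ via $e_i\mapsto E_i$ (using the Clifford relations) and setting $\overline\Delta=\overline\psi\,\overline\rho^+|_{{\rm Spin}^+(4,1)}$, but this is exactly your observation that real linear combinations and products of the $\hat E_i=\Psi_2(E_i)$ remain in $\Psi_2(\quaternions(2))$.
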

\begin{proof}
The homomorphism $\rho^+: \complexnos\uell(4,1) \to \complexnos\uell(4)$ of complex algebras restricts to a homomorphism 
$\overline{\rho}^+: {\rm C}\uell(4,1) \to {\rm C}\uell(4)$ of real algebras. 
The mapping $\overline\psi: \{e_1,\ldots, e_4\} \to \{E_1,\ldots, E_4\}$, defined by $\overline\psi(e_i) = E_i$ for $i=1,\ldots, 4$,  
extends to a homomorphism $\overline\psi:{\rm C}\uell(4) \to \quaternions(2)$ of real algebras. 
Let $\overline \Delta: {\rm Spin}^+(4,1) \to \quaternions(2)$ be the restriction of $\overline\psi\overline{\rho}^+: {\rm C}\uell(4,1) \to \quaternions(2)$. 
Then $\Delta = \Psi_2 \overline\Delta$. 
\end{proof}

If $A$ is a matrix in $\quaternions(2)$, let $A^\ast$ be the conjugate transpose of $A$, 
and let $J = E_5$. 
The group G = ${\rm SU}(1,1;\quaternions)$ is defined by
$$G = \{A \in \quaternions(2) : A^\ast J A = J\}.$$
Let 
$\hat G = \Psi_2(G)$. 
Note that $\Psi_2:\quaternions(2) \to \complexnos(4)$ maps $G$ isomorphically onto $\hat G$. 

\begin{theorem} \label{T:Ghat} 
The complex spin representation $\Delta: {\rm Spin}^+(4,1) \to \complexnos(4)$ maps ${\rm Spin}^+(4,1)$ isomorphically onto $\hat G$. 
\end{theorem}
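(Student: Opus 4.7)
The plan is to mirror the proof of Theorem~\ref{thm:5.3}, replacing $\complexnos(2)$ by $\quaternions(2)$. By Lemma~\ref{L:quat}, $\Delta = \Psi_2 \overline\Delta$ with $\Psi_2$ an injection of real algebras, so it suffices to show that $\overline\Delta: {\rm Spin}^+(4,1) \to \quaternions(2)$ is a bijection onto $G = {\rm SU}(1,1;\quaternions)$. Injectivity of $\overline\Delta$ is inherited from the faithfulness of $\Delta$, so the real task is to pin down the image.

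My main tool will be the real algebra homomorphism $\phi = \overline\psi\,\overline\rho^+ : {\rm C\uell}(4,1) \to \quaternions(2)$ from the proof of Lemma~\ref{L:quat}, and first I would establish that its restriction $\phi : {\rm C\uell}^0(4,1) \to \quaternions(2)$ is an isomorphism of real algebras. Both sides have real dimension $16$. The same dimension argument used in Lemma~\ref{L:rho+-} (applied to real algebras, since $\overline\rho^+(e_5) = -\omega$ lies in ${\rm C\uell}(4)$) shows that $\overline\rho^+$ restricts to an isomorphism ${\rm C\uell}^0(4,1) \xrightarrow{\sim} {\rm C\uell}(4)$. Then I would verify that $\overline\psi: {\rm C\uell}(4) \to \quaternions(2)$ is an isomorphism, either by appealing to the classical Clifford-algebra identification ${\rm C\uell}(4) \cong \quaternions(2)$, or by directly checking that the sixteen monomials $\overline\psi(e_{i_1}\cdots e_{i_k})$ with $1\le i_1 < \cdots < i_k \le 4$ are linearly independent over $\realnos$ in $\quaternions(2)$.

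Next, by Lemma~\ref{L:spin234}, ${\rm Spin}^+(4,1) = \{x \in {\rm C\uell}^0(4,1) : x^tx = 1\}$. I would introduce the map $\sigma : \quaternions(2) \to \quaternions(2)$ defined by $\sigma(A) = JA^*J$, which is an antiautomorphism of real algebras since $J^* = J = J^{-1}$. The central identity $\phi(x^t) = \sigma(\phi(x))$ holds for all $x \in {\rm C\uell}(4,1)$; both sides are antihomomorphisms of real algebras, so it suffices to verify it on the generators $e_1, \ldots, e_5$. For $i=1,\ldots,4$ this reduces to the direct matrix identity $J E_i^* J = E_i$, and for $i = 5$ one computes $\phi(e_5) = -E_1E_2E_3E_4 = E_5 = J$, whence $\sigma\phi(e_5) = J$ as well.

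Combining the steps yields
$$\overline\Delta({\rm Spin}^+(4,1)) = \phi\bigl(\{x \in {\rm C\uell}^0(4,1) : x^tx = 1\}\bigr) = \{A \in \quaternions(2) : \sigma(A)A = I\} = \{A \in \quaternions(2) : A^*JA = J\} = G,$$
where the second equality uses that $\phi|_{{\rm C\uell}^0(4,1)}$ surjects onto $\quaternions(2)$ and the third uses $J^2 = I$. The principal obstacle I anticipate is the surjectivity of $\overline\psi$: because the coefficient ring $\quaternions$ is noncommutative, the linear-independence check is slightly more delicate than the complex analog in Theorem~\ref{thm:5.3}, though no more conceptually difficult.
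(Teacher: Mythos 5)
Your proposal is correct and follows essentially the same route as the paper's proof: reduce via $\Psi_2$ to the quaternionic representation, show $\phi$ restricts to a real-algebra isomorphism ${\rm C\uell}^0(4,1)\to\quaternions(2)$ by the dimension/linear-independence argument, and then use Lemma~\ref{L:spin234} together with the identity $\phi(\,\cdot\,)^t=\sigma\phi$ (checked on the generators $e_1,\dots,e_5$, with $\phi(e_5)=-E_1E_2E_3E_4=E_5=J$) to identify the image with ${\rm SU}(1,1;\quaternions)$. Your worry about noncommutativity of $\quaternions$ is harmless, since the linear independence of the sixteen monomials is taken over $\realnos$, exactly as in the paper.
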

\begin{proof}
There is a homomorphism $\overline\Delta:  {\rm Spin}^+(4,1) \to \quaternions(2)$ such that $\Delta = \Psi_2\overline{\Delta}$ by Lemma \ref{L:quat}. 
Moreover $\Psi_2 : \quaternions(2) \to \complexnos(4)$ is a monomorphism of real algebras.  
Hence, it suffices to prove that $\overline\Delta$ maps ${\rm Spin}^+(4,1)$ isomorphically onto $G$. 
 
The real algebra $\quaternions(2)$ is a 16-dimensional real vector space. 
The 16 products $E_{i_1}\cdots E_{i_k}$ with $1\leq i_1< \cdots < i_k\leq 4$, including the empty product, are linearly independent over $\realnos$. 
Hence $\overline\psi: {\rm C}\uell(4) \to \quaternions(2)$ is an isomorphism of real algebras. 
The homomorphism $\overline{\rho}^+: {\rm C}\uell(4,1) \to {\rm C}\uell(4)$ of real algebras, 
restricts to an isomorphism $\overline{\rho}^+_0: {\rm C}\uell^0(4,1)\to {\rm C}\uell(4)$ of real algebras 
by the same argument as in the proof of Lemma \ref{L:rho+-}. 
Hence $\phi = \overline\psi\overline{\rho}^+$ restricts to an isomorphism $\phi_0 = \overline\psi\overline{\rho}^+_0: {\rm C}\uell^0(4,1) \to \quaternions(2)$ 
of real algebras. 
As ${\rm Spin}^+(4,1)$ is a subgroup of ${\rm C\uell}^0(4,1)$, 
the homomorphism $\phi: {\rm C}\uell(4,1) \to \quaternions(2)$ maps ${\rm Spin}^+(4,1)$ isomorphically onto a subgroup of $\quaternions(2)$.  
As $\overline\Delta$ is the restriction of $\phi$, 
It remains only to show that $\phi({\rm Spin}^+(4,1)) = G$. 

By Lemma \ref{L:spin234}, we have that
$${\rm Spin}^+(4,1) = \{x \in {\rm C\uell}^0(4,1): x^t x =  1\}.$$
The map $\sigma: \quaternions(2) \to \quaternions(2)$ defined by $\sigma(A) = JA^\ast J$ 
is an antiautomorphism of the real algebra $\quaternions(2)$.  
If $A$ has row vectors $(a,b)$ and $(c,d)$, then 
$$\sigma(A) = \left(\begin{array}{rr} \overline{a} & -\overline{c} \\ -\overline{b} & \overline{d} \end{array}\right).$$
The maps $\phi(\ )^t, \sigma\phi: {\rm C\uell}(4,1) \to \quaternions(2)$ are antihomomorphisms of real algebras.
For each $i = 1,\ldots, 5$, we have that
$$\phi(e_i^t) = \phi(e_i) = E_i = \sigma(E_i) = \sigma\phi(e_i).$$
Therefore $\phi(\  )^t = \sigma\phi$. 
As $\phi$ restricts to an isomorphism $\phi_0: {\rm C\uell}^0(4,1) \to \quaternions(2)$ of real algebras, we have that 
$$\phi({\rm Spin}^+(4,1)) = \{A \in \quaternions(2): \sigma(A)A= I\} = G.$$

\vspace{-.2in}
\end{proof}

Consider the homomorphism $\phi: {\rm C}\uell(4,1) \to \complexnos(2)$ of real algebras defined in the proof of Theorem \ref{T:Ghat}. 
Then $\phi(e_i) = E_i$ for $i = 1,\ldots, 5$. 
The matrices $E_1,\ldots, E_5$ span a 5-dimensional real vector subspace $W$ of $\quaternions(2)$ 
and so $\phi: {\rm C}\uell(4,1) \to \quaternions(2)$ restricts to a vector space isomorphism $\delta: \realnos^5 \to W$. 

\begin{lemma} \label{L:conjugate} 
If $A$ is in $G$ and $B$ is in $W$, then $ABA^{-1}$ is in $W$. 
\end{lemma}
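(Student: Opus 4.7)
The strategy is to transfer the conjugation statement from $\mathbb{H}(2)$ back to $\mathrm{C}\uell(4,1)$, where the invariance of $\mathbb{R}^{4,1}$ under the adjoint action of $\mathrm{Spin}^+(4,1)$ is part of the very construction of the spin double cover.

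First, recall from the proof of Theorem \ref{T:Ghat} that $\phi = \overline\psi \overline\rho^+ : \mathrm{C}\uell(4,1) \to \mathbb{H}(2)$ is a homomorphism of real algebras whose restriction $\phi_0 : \mathrm{C}\uell^0(4,1) \to \mathbb{H}(2)$ is an isomorphism, and which restricts further to an isomorphism of groups $\phi|_{\mathrm{Spin}^+(4,1)} : \mathrm{Spin}^+(4,1) \to G$. Also by construction, the restriction $\delta : \mathbb{R}^{4,1} \to W$ of $\phi$ to the degree-one part $\mathbb{R}^{4,1} \subset \mathrm{C}\uell(4,1)$ is a vector space isomorphism onto $W$.

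Given $A \in G$ and $B \in W$, I would proceed as follows. By Step~1, choose the unique $x \in \mathrm{Spin}^+(4,1)$ with $\phi(x) = A$; since $\mathrm{Spin}^+(4,1) \subset \mathrm{P}(4,1)$, the element $x$ is invertible in $\mathrm{C}\uell(4,1)$, and applying $\phi$ to $x x^{-1} = 1$ gives $\phi(x^{-1}) = A^{-1}$. By Step~2, choose the unique $v \in \mathbb{R}^{4,1}$ with $\phi(v) = \delta(v) = B$. Because $\phi$ is an algebra homomorphism,
$$ABA^{-1} \;=\; \phi(x)\,\phi(v)\,\phi(x^{-1}) \;=\; \phi(x v x^{-1}).$$

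The key fact now comes from the theory recalled in Section~3: the adjoint map $\mathrm{Ad}_x(w) = x w x^{-1}$ sends $\mathbb{R}^{4,1}$ into itself, and is precisely the construction used to define the double covering $\mathrm{Ad} : \mathrm{Spin}^+(4,1) \to \mathrm{SO}^+(4,1)$ via Theorem I.2.10 of \cite{L-M}. Consequently $x v x^{-1} \in \mathbb{R}^{4,1}$, and therefore
$$ABA^{-1} \;=\; \phi(x v x^{-1}) \;\in\; \phi(\mathbb{R}^{4,1}) \;=\; \delta(\mathbb{R}^{4,1}) \;=\; W,$$
which is the claim.

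There is no real obstacle here: every ingredient has already been set up. The only point requiring a moment of care is keeping track of which algebra each object lives in, so that the inverse of $A$ in $\mathbb{H}(2)$ is correctly identified with $\phi(x^{-1})$ where $x^{-1}$ is taken in $\mathrm{C}\uell(4,1)$; this is automatic because $\phi$ is a unital algebra homomorphism and $x$ is already invertible in the ambient Clifford algebra.
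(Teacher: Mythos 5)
Your proof is correct and follows essentially the same route as the paper: both pull $A$ and $B$ back through $\phi$ to $x\in{\rm Spin}^+(4,1)$ and $v\in\realnos^{4,1}$, invoke the standard fact that conjugation by a spin group element preserves the vector space (the paper cites Proposition I.2.2 of \cite{L-M}, you cite the adjacent Theorem I.2.10, which rests on the same point), and push forward to conclude $ABA^{-1}\in W$. Your extra care about where inverses are taken is fine but, as you note, automatic.
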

\begin{proof} 
There is an $x$ in ${\rm Spin}^+(4,1)$ such that $\phi(x) = A$ by Theorem \ref{T:Ghat}.  Moreover there is a $v$ in $\realnos^5$ such that $\phi(v) = B$. 
Now $xvx^{-1}$ is in $\realnos^5$ by Proposition I.2.2 of \cite{L-M}.   
Hence $\delta(xvx^{-1}) = \phi(xvx^{-1}) = ABA^{-1}$ is in $W$. 
\end{proof}

We have that 
$$W = \left\{\left(\begin{array}{rr} r & q \\ -\overline{q} & -r \end{array}\right): r \in \realnos,\ q \in \quaternions\right\}.$$
Let $B$ be in $W$ with first row vector $(r,q)$. Then $B^2 = (r^2-|q|^2)I$, 
and so $B^2$ is a scalar matrix. 
Define a quadratic form $f$ on $W$ by
$$f(B) = -\textstyle{\frac{1}{2}}{\rm tr}(B^2) = |q|^2 - r^2.$$
Let $f_4$ be the Lorentzian quadratic form on $\realnos^{4,1}$. 
Observe that $\delta: \realnos^5 \to W$ satisfies $f(\delta(v)) = f_4(v)$ for all $v$ in $\realnos^5$. 
Hence $\delta$ is an isometry from $(\realnos^5, f_4)$ to $(W,f)$. 

Let $A$ be in $G$ and let $B$ be in $W$.  Then 
$$f(ABA^{-1}) =  -\textstyle{\frac{1}{2}}{\rm tr}((ABA^{-1})^2) =  -\textstyle{\frac{1}{2}}{\rm tr}(AB^2A^{-1}) =  -\textstyle{\frac{1}{2}}{\rm tr}(B^2) = f(B).$$
Hence the map ${\rm Ad}_A: W \to W$, defined by ${\rm Ad}_A(B) = ABA^{-1}$, is in ${\rm O}(W,f)$. 
Define a homomorphism ${\rm Ad}: G \to {\rm O}(W, f)$ by ${\rm Ad}(A) = {\rm Ad}_A$. 

Define an isomorphism $\delta_\ast: {\rm O}(4,1) \to  {\rm O}(W,f)$ by $\delta_\ast(T) = \delta T \delta^{-1}$. 
Then $\delta_\ast({\rm SO}(4,1)) = {\rm SO}(W,f)$. 
Define ${\rm SO}^+(W,f)$ to be the connected component of ${\rm SO}(W,f)$ containing the identity. 
Then $\delta_\ast({\rm SO}^+(4,1)) = {\rm SO}^+(W,f)$, since $\delta_\ast$ is a homeomorphism. 

\begin{theorem}\label{T:AdG} 
We have ${\rm Ad}(G) = {\rm SO}^+(W,f)$ and 
the following diagram commutes
$$\begin{array}{ccc} 
{\rm Spin}^+(4,1) & {\buildrel \overline\Delta \over \longrightarrow} & G \\
{\rm Ad} \downarrow  &  & \phantom{{\rm Ad}}\downarrow {\rm Ad}  \\
{\rm SO}^+(4,1) & {\buildrel \delta_\ast \over \longrightarrow} & {\rm SO}^+(W,f)
\end{array}$$
with horizontal maps isomorphisms. 
\end{theorem}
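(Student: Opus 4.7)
The plan is to observe that this theorem is a formal consequence of what has already been established, with the only real content being the commutativity of the square. The two horizontal maps are isomorphisms for reasons already in place: $\overline\Delta$ is an isomorphism of ${\rm Spin}^+(4,1)$ with $G$ by Theorem~\ref{T:Ghat}, and $\delta_\ast$ restricts to an isomorphism of ${\rm SO}^+(4,1)$ with ${\rm SO}^+(W,f)$ as noted in the paragraph preceding the theorem statement.

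For commutativity, the key observation is that $\phi = \overline\psi\,\overline\rho^+ : {\rm C\uell}(4,1) \to \quaternions(2)$ is a homomorphism of real algebras whose restriction to $\realnos^5$ is the isometry $\delta$ and whose restriction to ${\rm Spin}^+(4,1)$ is $\overline\Delta$. For $x \in {\rm Spin}^+(4,1)$ and $v \in \realnos^5$, I would compute
$$\overline\Delta(x)\,\delta(v)\,\overline\Delta(x)^{-1} \;=\; \phi(x)\phi(v)\phi(x)^{-1} \;=\; \phi(xvx^{-1}) \;=\; \delta(xvx^{-1}) \;=\; \delta({\rm Ad}(x)(v)),$$
where $xvx^{-1}\in\realnos^5$ by Proposition I.2.2 of \cite{L-M} (the same fact invoked in Lemma~\ref{L:conjugate}). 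The left-hand side is the value of ${\rm Ad}(\overline\Delta(x))$ on $\delta(v)\in W$, while the right-hand side is the value of $\delta_\ast({\rm Ad}(x))$ on $\delta(v)$. As $v$ ranges over $\realnos^5$, $\delta(v)$ ranges over all of $W$, so this yields ${\rm Ad}\circ\overline\Delta = \delta_\ast\circ{\rm Ad}$ as maps ${\rm Spin}^+(4,1)\to{\rm O}(W,f)$.

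Given commutativity, the equality ${\rm Ad}(G) = {\rm SO}^+(W,f)$ is immediate. The left-hand ${\rm Ad}: {\rm Spin}^+(4,1) \to {\rm SO}^+(4,1)$ is the double covering epimorphism of \S3 and is in particular surjective, while $\overline\Delta$ maps onto $G$ and $\delta_\ast$ maps ${\rm SO}^+(4,1)$ onto ${\rm SO}^+(W,f)$, so
$${\rm Ad}(G) \;=\; {\rm Ad}(\overline\Delta({\rm Spin}^+(4,1))) \;=\; \delta_\ast({\rm Ad}({\rm Spin}^+(4,1))) \;=\; \delta_\ast({\rm SO}^+(4,1)) \;=\; {\rm SO}^+(W,f).$$
There is no real obstacle here beyond keeping the bookkeeping straight; the entire content is packaged into the single fact that $\phi$ is a real-algebra homomorphism which simultaneously restricts to $\delta$ on the vector part and to $\overline\Delta$ on the spin group, so conjugation in $\quaternions(2)$ intertwines the two ${\rm Ad}$ actions on the nose.
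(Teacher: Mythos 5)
Your proposal is correct and follows essentially the same route as the paper: the paper likewise reduces everything to commutativity of the square with target ${\rm O}(W,f)$ and verifies it by the identical computation, using that $\phi$ is a real-algebra homomorphism restricting to $\delta$ on $\realnos^5$ and to $\overline\Delta$ on ${\rm Spin}^+(4,1)$, with $xvx^{-1}\in\realnos^5$ by Proposition I.2.2 of \cite{L-M}. Your explicit chain ${\rm Ad}(G)=\delta_\ast({\rm Ad}({\rm Spin}^+(4,1)))=\delta_\ast({\rm SO}^+(4,1))={\rm SO}^+(W,f)$ just spells out the deduction the paper leaves implicit in its opening ``it suffices'' remark.
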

\begin{proof}
It suffices to prove that the following diagram commutes
$$\begin{array}{ccc} 
{\rm Spin}^+(4,1) & {\buildrel \overline\Delta \over \longrightarrow} & G \\
{\rm Ad} \downarrow  &  & \phantom{{\rm Ad}}\downarrow {\rm Ad}  \\
{\rm SO}^+(4,1) & {\buildrel \delta_\ast \over \longrightarrow} & {\rm O}(W,f).
\end{array}$$
Let $x$ be in ${\rm Spin}^+(4,1)$.  
Then we have that 
$${\rm Ad}(\overline\Delta(x)) =  {\rm Ad}(\phi(x))={\rm Ad}_{\phi(x)}.$$
Whereas 
$$\delta_\ast({\rm Ad}(x)) = \delta_\ast({\rm Ad}_x) = \delta{\rm Ad}_x \delta^{-1}.$$
Let $B$ be in $W$.  Then ${\rm Ad}_{\phi(x)}(B) = \phi(x)B \phi(x)^{-1}$. 
Whereas  
$$\delta {\rm Ad}_x \delta^{-1}(B) = \delta{\rm Ad}_x(\delta^{-1}(B)) = \delta(x\delta^{-1}(B)x^{-1}) = \delta(x) B\delta(x^{-1}) = \phi(x) B \phi(x)^{-1}.$$
Therefore ${\rm Ad}_{\phi(x)} = \delta {\rm Ad}_x \delta^{-1}$, and so ${\rm Ad}\overline\Delta = \delta_\ast{\rm Ad}$. 
\end{proof}

Define $\eta: G \to {\rm SO}^+(4,1)$ by $\eta = \delta_\ast^{-1}{\rm Ad}$. 
Then $\eta$ is a double covering epimorphism by Theorem \ref{T:AdG}. 
We regard ${\rm SO}^+(4,1)$ to be a matrix group. 
If $A$ is in $G$, then $\eta(A)$ is the matrix of the isometry ${\rm Ad}(A)$ of $(W,f)$ with respect to the 
basis $E_1,\ldots, E_5$. 

The proof of the next lemma is the same as for Lemma \ref{L:preserveJ}. 

\begin{lemma} \label{L:QpreserveJ} 
Let $A$ be in $\quaternions(2)$ with row vectors $(a, b)$ and $(c, d)$.  Then $A^\ast J A = J$ 
if and only if  $|a|^2-|c|^2 = 1, |d|^2-|b|^2 = 1$, and $\overline{b}a = \overline{d}c$. 
\end{lemma}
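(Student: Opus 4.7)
The plan is to imitate the proof of Lemma \ref{L:preserveJ} verbatim, just replacing complex entries by quaternionic ones and being mindful of non-commutativity. Since $J = E_5 = \left(\begin{smallmatrix} 1 & 0 \\ 0 & -1 \end{smallmatrix}\right)$ has real (hence central) entries, left multiplication by $J$ simply negates the bottom row and right multiplication by $J$ negates the right column, so the computation of $A^\ast J A$ goes through in $\quaternions(2)$ exactly as in $\complexnos(2)$.

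Concretely, I would write $A^\ast = \left(\begin{smallmatrix} \overline{a} & \overline{c} \\ \overline{b} & \overline{d} \end{smallmatrix}\right)$ and compute
\[
A^\ast J A \;=\; \left(\begin{array}{cc} \overline{a} & \overline{c} \\ \overline{b} & \overline{d} \end{array}\right)\left(\begin{array}{rr} a & b \\ -c & -d \end{array}\right) \;=\; \left(\begin{array}{cc} |a|^2-|c|^2 & \overline{a}b-\overline{c}d \\ \overline{b}a-\overline{d}c & |b|^2-|d|^2 \end{array}\right),
\]
using the identity $\overline{q}q=|q|^2$ for $q\in\quaternions$ on the diagonal. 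Equating entrywise with $J$ yields the three listed conditions together with $\overline{a}b=\overline{c}d$, which is the only part that needs a word of explanation.

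The one small subtlety is that $\quaternions$ is non-commutative, so I have to check that the two off-diagonal equations are equivalent and not independent constraints. This follows from the quaternionic identity $\overline{pq}=\overline{q}\,\overline{p}$: taking conjugates of $\overline{b}a=\overline{d}c$ gives $\overline{a}b=\overline{c}d$, so the $(1,2)$ entry vanishes iff the $(2,1)$ entry vanishes. Once this is observed, the equivalence in the lemma is immediate.

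No step here is genuinely difficult, which is why the author writes ``the proof of the next lemma is the same as for Lemma \ref{L:preserveJ}''; the only obstacle worth flagging is being careful about the order of quaternionic multiplication and the use of $\overline{pq}=\overline{q}\,\overline{p}$ rather than the commutative rule that one could use tacitly in the $\complexnos$ case.
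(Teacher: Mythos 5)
Your proposal is correct and follows exactly the route the paper intends: the paper simply declares the proof ``the same as for Lemma \ref{L:preserveJ}'', i.e.\ the entrywise computation of $A^\ast J A$ that you reproduce. Your extra remark that the two off-diagonal conditions are interchanged by quaternionic conjugation via $\overline{pq}=\overline{q}\,\overline{p}$ is a correct and welcome justification of the one point the paper leaves tacit.
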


Define a group $H$ by 
$$H = \left\{\left(\begin{array}{cc} a & 0 \\ 0 & d\end{array}\right): a, d \in \quaternions\ \ \hbox{and}\ \ |a| = |d| = 1\right\}.$$
Let $\hat H = \Psi_2(H)$.  Note that $\Psi_2: \quaternions(2) \to \complexnos(4)$ maps $H$ isomorphically onto $\hat H$. 

\begin{theorem}\label{thm:6.6} 
The complex spin representation $\Delta:{\rm Spin}^+(4,1) \to \complexnos(4)$ maps ${\rm Spin}(4)$ isomorphically onto the group $\hat H$. 
\end{theorem}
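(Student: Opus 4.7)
The plan is to mirror the argument of Theorem~\ref{thm:5.4}, exploiting the identification of $\mathrm{Spin}(4)$ with the stabilizer of $e_5$ inside $\mathrm{Spin}^+(4,1)$. Since $\Delta=\Psi_2\overline\Delta$ and $\Psi_2:\mathbb{H}(2)\to\mathbb{C}(4)$ is an injective homomorphism of real algebras that maps $H$ isomorphically onto $\hat H$, it suffices to show that $\overline\Delta$ maps $\mathrm{Spin}(4)$ isomorphically onto $H$. Theorem~\ref{T:Ghat} already gives that $\overline\Delta:\mathrm{Spin}^+(4,1)\to G$ is an isomorphism, so the real content is to identify $\overline\Delta(\mathrm{Spin}(4))$ with $H$.

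For this identification, recall from Section~3 that $\mathrm{Spin}(4)\subset\mathrm{Spin}^+(4,1)$ is exactly the subgroup stabilizing $e_5\in\mathbb{R}^5$ under the conjugation action of $\mathrm{Spin}^+(4,1)$ on $\mathbb{R}^5\subset\mathrm{C\uell}(4,1)$. Because the isometry $\delta:\mathbb{R}^5\to W$ from Section~6 satisfies $\delta(e_5)=E_5=J$ and intertwines the conjugation action of $\mathrm{Spin}^+(4,1)$ with that of $G$ (this is the content of the commuting diagram in Theorem~\ref{T:AdG}, applied element-wise), the image $\overline\Delta(\mathrm{Spin}(4))$ is precisely the subgroup of $G$ consisting of matrices $A$ with $AJA^{-1}=J$.

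It remains to solve $AJA^{-1}=J$ inside $G$, i.e., the commutation equation $AJ=JA$. Writing $A=\left(\begin{smallmatrix}a & b\\ c & d\end{smallmatrix}\right)$ with $a,b,c,d\in\mathbb{H}$, the equation
$$\left(\begin{array}{cc} a & -b \\ c & -d \end{array}\right)=AJ=JA=\left(\begin{array}{cc} a & b \\ -c & -d \end{array}\right)$$
forces $b=c=0$, so $A=\mathrm{diag}(a,d)$. Combining with the conditions of Lemma~\ref{L:QpreserveJ} for membership in $G$ (which with $b=c=0$ collapse to $|a|^2=1$ and $|d|^2=1$), we obtain $A\in H$. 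The converse, that every $A\in H$ lies in $G$ and commutes with $J$, is immediate from Lemma~\ref{L:QpreserveJ} and the diagonal form of $A$.

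There is no serious obstacle here, as the argument is a direct analogue of the proof of Theorem~\ref{thm:5.4}; the only minor point requiring care is confirming that the stabilizer of $e_5$ under conjugation in $\mathrm{Spin}^+(4,1)$ is transported by $\overline\Delta$ to the stabilizer of $J$ in $G$, which is exactly what the commuting diagram in Theorem~\ref{T:AdG} ensures (together with $\delta(e_5)=E_5=J$). The injectivity of $\overline\Delta|_{\mathrm{Spin}(4)}$ is inherited from the injectivity of $\overline\Delta$ given by Theorem~\ref{T:Ghat}.
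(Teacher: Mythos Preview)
Your proof is correct and follows essentially the same approach as the paper's: both reduce to showing $\overline\Delta(\mathrm{Spin}(4))=H$ by identifying $\mathrm{Spin}(4)$ as the stabilizer of $e_5$, transporting this via $\delta$ to the stabilizer of $E_5=J$ in $G$, and then solving that stabilizer condition. The only cosmetic difference is that you solve $AJ=JA$ directly to force $b=c=0$, whereas the paper first rewrites the stabilizer condition as $A^{-1}=A^\ast$ (using $A^\ast JA=J$) and then compares $A^\ast$ with $A^{-1}=\sigma(A)$; both computations are equally short and yield the same conclusion.
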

\begin{proof}
As $\Delta = \overline\Delta\Psi_2$, it suffices to prove that $\overline\Delta({{\rm Spin}}(4)) = H$. 
The group ${{\rm Spin}}(4)$ is the stabilizer of $e_5$ under the action of ${\rm Spin}^+(4,1)$ on $\realnos^5$ by conjugation. 
Therefore $\overline\Delta({{\rm Spin}}(4)) = \phi({{\rm Spin}}(4))$ is the stabilizer of $E_5$ under the action of $G$ on $W$ by conjugation. 

Suppose $A$ is in $G$.  As $E_5 = J$, we have 
that $AE_5A^{-1} = E_5$ if and only if $A^{-1}J A = J = A^\ast JA$, 
and so $AE_5A^{-1} = E_5$ if and only if $A^{-1} = A^\ast$. 

Now suppose that $A^{-1} = A^\ast$.  Let $A$ have row vectors $(a,b)$ and $(c,d)$. 
As $A^{-1} = \sigma(A)$, we have 
$$\left(\begin{array}{rr} \overline{a} & -\overline{c} \\ -\overline{b} & \overline{d} \end{array}\right) = 
\left(\begin{array}{rr} \overline{a} & \overline{c} \\ \overline{b} & \overline{d} \end{array}\right).$$
Hence $\overline{b} = 0 = \overline{c}$, and so $b = 0 = c$. 
We have that
$$\left(\begin{array}{rr} a & 0 \\ 0 & d \end{array}\right)\left(\begin{array}{rr} \overline{a} & 0\\ 0 & \overline{d} \end{array}\right)
= \left(\begin{array}{rr} 1 & 0 \\ 0 &1 \end{array}\right).$$
Hence $|a|^2 = 1= |d|^2$, and so $|a| = 1 = |d|$. 
Therefore $A$ is in $H$. 

Conversely, suppose $A$ is in $H$.  
Then $A$ is in $G$ by Lemma \ref{L:QpreserveJ}. 
Moreover $A^{-1} = A^\ast$, and so $A^{-1}JA = J$.  Therefore $A$ stabilizes $J = E_5$. 
Thus $H$ is the stabilizer of $E_5$ under the action of $G$ on $W$ by conjugation. 
Hence $\overline\Delta({{\rm Spin}}(4)) = H$. 
\end{proof}


\section{The complex spinor bundle of a hyperbolic spin manifold}

Let $\Gamma$ be a torsion-free discrete subgroup of ${\rm SO}^+(n,1)$. 
Then $\Gamma\backslash{\rm SO}^+(n,1)$ is a principal ${\rm SO}(n)$-bundle over 
the hyperbolic space-form $\Gamma\backslash H^n$ with bundle projection 
$\varepsilon: \Gamma\backslash{\rm SO}^+(n,1) \to \Gamma\backslash H^n$ 
defined by $\varepsilon(\Gamma A) = \Gamma Ae_{n+1}$,  
and right action of ${\rm SO}(n)$ on $\Gamma\backslash{\rm SO}^+(n,1)$ defined by 
$(\Gamma A)B = \Gamma (AB)$. 

Suppose we have a spin structure on the hyperbolic space-form $\Gamma\backslash H^n$.  
Let $\hat\Gamma$ be the corresponding subgroup of ${\rm Spin}^+(n,1)$ as in Theorem~\ref{T:spin-hyperbolic}. 
Then $\hat\Gamma\backslash{\rm Spin}^+(n,1)$ is a principal ${\rm Spin}(n)$-bundle over 
the space-form $\Gamma\backslash H^n$ with bundle projection 
$\zeta: \hat\Gamma\backslash{\rm Spin}^+(n,1) \to \Gamma\backslash H^n$ 
defined by $\zeta(\hat\Gamma g) = \Gamma ({\rm Ad}(g)e_{n+1})$,  
and right action of ${\rm Spin}(n)$ on $\hat\Gamma\backslash{\rm Spin}^+(n,1)$ defined by 
$(\hat \Gamma g)s = \hat\Gamma (gs)$.

The double covering epimorphism ${\rm Ad}: {\rm Spin}^+(n,1) \to {\rm SO}^+(n,1)$ 
induces a spin structure ${\rm \rho}: \hat\Gamma \backslash {\rm Spin}^+(n,1) \to \Gamma\backslash {\rm SO}^+(n,1)$ 
on $\Gamma\backslash H^n$ defined by ${\rm \rho}(\hat\Gamma g) = \Gamma ({\rm Ad} (g))$ by Theorem~\ref{T:spin-hyperbolic}.

Assume that $n$ is even. Let $n = 2m$ and $k = 2^m$. 
Let $\Delta_n: {\rm Spin}(n) \to \complexnos(k)$ be the complex spin representation.  
We let ${\rm Spin}(n)$ act on the left of $\complexnos^k$ by $s v = \Delta_n(s) v$. 
Then ${\rm Spin}(n)$ acts freely on the right of $(\hat\Gamma\backslash {\rm Spin}^+(n,1)) \times \complexnos^k$ 
by 
$$(\hat\Gamma g, v) s =  (\hat\Gamma gs, s^{-1} v).$$
Let $S = {\rm Spin}(n)$. 
The {\it complex spinor bundle} $\SS$ of $\Gamma\backslash H^n$ with respect to the spin structure on $\Gamma\backslash H^n$ 
corresponding to the lift $\hat \Gamma$ of $\Gamma$ 
is the complex vector bundle

$$\hat\Gamma\backslash {\rm Spin}^+(n,1) \times_{\Delta_n} \complexnos^k= \big((\hat\Gamma\backslash {\rm Spin}^+(n,1))\times \complexnos^k\big)/S$$
over $\Gamma\backslash H^n$ with bundle projection $\mu$ defined by 
$$\mu((\hat\Gamma g,v)S)  = \Gamma ({\rm Ad}(g)e_{n+1}).$$
Note that the complex spinor bundle $\SS$ is the bundle associated to the principal ${\rm Spin}(n)$-bundle $\hat\Gamma\backslash {\rm Spin}^+(n,1)$ 
via the representation $\Delta_n: {\rm Spin}(n) \to \complexnos(k)$. 

Likewise the {\it positive} and {\it negative complex spinor bundles} $\SS^\pm$ of $\Gamma\backslash H^n$ with respect to the spin structure on $\Gamma\backslash H^n$ 
corresponding to $\hat \Gamma$ are the complex vector bundles 

$$\hat\Gamma\backslash {\rm Spin}^+(n,1) \times_{\Delta_n^\pm} W^\pm = \big((\hat\Gamma\backslash {\rm Spin}^+(n,1))\times W^\pm \big)/S$$
over $\Gamma\backslash H^n$ with bundle projection $\mu^\pm$ defined by 
$$\mu^\pm((\hat\Gamma g,v)S)  = \Gamma ({\rm Ad}(g)e_{n+1}).$$
The positive and negative complex spinor bundles $\SS^\pm$ are the bundles associated to the principal ${\rm Spin}(n)$-bundle $\hat\Gamma\backslash {\rm Spin}^+(n,1)$ 
via the representations 
$$\Delta_n^\pm : {\rm Spin}(n) \to {\rm GL}(W^\pm).$$
As $\complexnos^k = W^+\oplus W^-$, we have the direct sum decomposition $\SS = \SS^+\oplus\SS^-$ of complex vector bundles.

Let $\Delta_{n,1}: {\rm Spin}^+(n,1) \to \complexnos(k)$ be the complex spin representation of ${\rm Spin}^+(n,1)$ that extends $\Delta_n$. 
Define a left action of ${\rm Spin}^+(n,1)$ on $\complexnos^k$ by $g v = \Delta_{n,1}(g) v$. 
Then ${\rm Spin}^+(n,1)$ acts diagonally on the left of $H^n \times \complexnos^k$ by 
$$g(x,v) = ({\rm Ad}(g)x, gv).$$ 
The group $\hat\Gamma$ acts freely and discontinuously on $H^n \times \complexnos^k$. 
Moreover, the orbit space 
$\hat\Gamma \backslash (H^n\times \complexnos^k)$
is a complex vector bundle over $\Gamma\backslash H^n$ 
with bundle projection $\nu$ defined by $\nu(\hat\Gamma(x,v)) = \Gamma x$.  

Define a map
$$\xi: \hat\Gamma\backslash {\rm Spin}^+(n,1)\times_{\Delta_n} \complexnos^k \to \hat\Gamma \backslash (H^n\times \complexnos^k)$$
by the formula
$$\xi((\hat\Gamma g,v)S) = \hat\Gamma({\rm Ad}(g)e_{n+1}, gv).$$
\begin{theorem}\label{thm:7.1} 
For each positive even integer $n$, 
the map 
$$\xi: \hat\Gamma\backslash {\rm Spin}^+(n,1)\times_{\Delta_n} \complexnos^k \to \hat\Gamma \backslash (H^n\times \complexnos^k)$$
is a complex vector bundle equivalence from the complex spinor bundle $\SS$ of $\Gamma\backslash H^n$, 
$$\mu: \hat\Gamma\backslash {\rm Spin}^+(n,1)\times_{\Delta_n} \complexnos^k \to \Gamma\backslash H^n,$$
to $\nu: \hat\Gamma\backslash(H^n \times \complexnos^k) \to \Gamma\backslash H^n$. 
\end{theorem}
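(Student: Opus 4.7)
The plan is to verify that $\xi$ is well-defined on the indicated double quotient, is a fiberwise $\complexnos$-linear bijection, and is smooth; standard theory then promotes such a map to a complex vector bundle equivalence. The geometric idea is that the target realizes $\hat\Gamma\backslash(H^n\times\complexnos^k)$ via the diagonal action of $\hat\Gamma$ through $\Delta_{n,1}$, while the source is the bundle associated to the principal ${\rm Spin}(n)$-bundle $\hat\Gamma\backslash{\rm Spin}^+(n,1)$ via $\Delta_n$; the equivalence reflects exactly that $\Delta_{n,1}$ extends $\Delta_n$ and that $\widehat{\rm Spin}(n)$ stabilizes $e_{n+1}$ under ${\rm Ad}$.

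First I would check well-definedness. Replacing $\hat\Gamma g$ by $\hat\Gamma\hat\gamma g$ with $\hat\gamma\in\hat\Gamma$ gives $\hat\Gamma({\rm Ad}(\hat\gamma g)e_{n+1},\hat\gamma g v)=\hat\Gamma\hat\gamma({\rm Ad}(g)e_{n+1},gv)$, representing the same $\hat\Gamma$-orbit. Replacing $(\hat\Gamma g,v)$ by $(\hat\Gamma gs,s^{-1}v)$ with $s\in S:={\rm Spin}(n)$ yields $\hat\Gamma({\rm Ad}(gs)e_{n+1},\, gs\cdot s^{-1}v)$; since $\widehat{\rm Spin}(n)$ fixes $e_{n+1}$ under ${\rm Ad}$ and $\Delta_{n,1}$ restricts to $\Delta_n$ on $\widehat{\rm Spin}(n)$, this equals $\hat\Gamma({\rm Ad}(g)e_{n+1},gv)$. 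Compatibility with the projections $\mu$ and $\nu$ is immediate from the formulas.

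Next I would establish bijectivity. For surjectivity, given $\hat\Gamma(x,v)$ with $x\in H^n$, the transitivity of the ${\rm Ad}$-action of ${\rm Spin}^+(n,1)$ on $H^n$ produces $g$ with ${\rm Ad}(g)e_{n+1}=x$, whereupon $\xi\bigl((\hat\Gamma g,\Delta_{n,1}(g)^{-1}v)S\bigr)=\hat\Gamma(x,v)$. For injectivity, suppose $\xi((\hat\Gamma g_1,v_1)S)=\xi((\hat\Gamma g_2,v_2)S)$. Then some $\hat\gamma\in\hat\Gamma$ satisfies ${\rm Ad}(\hat\gamma g_2)e_{n+1}={\rm Ad}(g_1)e_{n+1}$ and $\hat\gamma g_2 v_2=g_1 v_1$. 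The first equation forces $s:=(\hat\gamma g_2)^{-1}g_1$ to stabilize $e_{n+1}$ under ${\rm Ad}$, hence to lie in $\widehat{\rm Spin}(n)=S$, and the second then yields $v_1=s^{-1}v_2$. Therefore $(\hat\Gamma g_1,v_1)=(\hat\Gamma g_2 s,s^{-1}v_2)$ lies in the same $S$-orbit as $(\hat\Gamma g_2,v_2)$, so the two classes coincide.

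For the smooth and linear structure, $\xi$ descends from the manifestly smooth map $(\hat\Gamma g,v)\mapsto\hat\Gamma({\rm Ad}(g)e_{n+1},\Delta_{n,1}(g)v)$, and both source and target are quotients by free, proper, smooth actions, so $\xi$ is smooth with smooth inverse. On the fiber of $\mu$ above $\Gamma x$, any $g$ with ${\rm Ad}(g)e_{n+1}=x$ identifies the two fibers with $\complexnos^k$, and $\xi$ restricts to the action of $\Delta_{n,1}(g)\in{\rm GL}(k,\complexnos)$, a $\complexnos$-linear isomorphism. The main subtlety I expect is precisely the $S$-invariance step: well-definedness genuinely relies on the extension property $\Delta_{n,1}|_{\widehat{\rm Spin}(n)}=\Delta_n$, without which the target's twisting by $\Delta_{n,1}(g)$ would not match the source's associated-bundle twisting by $\Delta_n$, and $\xi$ would fail to descend to the quotient.
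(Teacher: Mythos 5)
Your proposal is correct and follows essentially the same route as the paper: both rest on the facts that $\widehat{\rm Spin}(n)$ is the stabilizer of $e_{n+1}$ under ${\rm Ad}$, that $\Delta_{n,1}$ restricts to $\Delta_n$ on $\widehat{\rm Spin}(n)$, and that the fiber maps are given by $\Delta_{n,1}(g)$, exactly as in the paper's computation $\xi_x i_g = j_x g_\ast$. The only difference is organizational: you verify well-definedness, bijectivity, and smoothness directly on the double quotient, whereas the paper first constructs the diffeomorphism $\overline\upsilon$ of the $S$-quotient with $H^n\times\complexnos^k$ and then passes to the $\hat\Gamma$-quotient via the covering projections ($q\overline\upsilon=\xi p$), which amounts to the same argument.
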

\begin{proof}
Define a map
$$\upsilon: {\rm Spin}^+(n,1)\times \complexnos^k \to H^n\times \complexnos^k$$
by $\upsilon(g,v) = ({\rm Ad}(g)e_{n+1},gv)$. 
Then $\upsilon$ is a smooth surjection. 
The compact Lie group $S = {\rm Spin}(n)$ acts freely on the right of ${\rm Spin}^+(n,1)\times \complexnos^k$ 
by $(g,v)s = (gs,s^{-1}v)$. 
The $S$-orbits are the fibers of $\upsilon$, and so $\upsilon$ induces a diffeomorphism
$$\overline\upsilon: ({\rm Spin}^+(n,1)\times \complexnos^k)/S \to H^n\times \complexnos^k$$
defined by 
$$\overline\upsilon((g,v)S) = ({\rm Ad}(g)e_{n+1},gv).$$ 

The group $\hat\Gamma$ acts on the left of $({\rm Spin}^+(n,1)\times \complexnos^k)/S$ by $\hat\gamma((g,v)S) = (\hat\gamma,v)S$.
The map $\overline\upsilon$ is $\hat\Gamma$-equivariant, and so $\hat\Gamma$ acts freely and discontinuously on $({\rm Spin}^+(n,1)\times \complexnos^k)/S$. 
The manifold $\hat\Gamma\backslash(({\rm Spin}^+(n,1)\times \complexnos^k)/S)$ is canonically diffeomorphic 
to $((\hat\Gamma\backslash {\rm Spin}^+(n,1))\times \complexnos^k)/S$, and so we have a smooth covering projection 
$$p: ({\rm Spin}^+(n,1)\times \complexnos^k)/S \to  \hat\Gamma\backslash {\rm Spin}^+(n,1)\times_{\Delta_n} \complexnos^k$$
defined by $p((g,v)S) = (\hat\Gamma g, v)S$. 

Let $q: H^n\times \complexnos^k \to \hat\Gamma\backslash (H^n \times \complexnos^k)$ be the quotient map. 
Then $q$ is a smooth covering projection. 
We have that $q \overline\upsilon = \xi p$. 
Therefore $\xi$ is a diffeomorphism.  

Observe that $\nu\xi = \mu$, since
\begin{eqnarray*}
\nu\xi((\hat\Gamma g,v)S) & = & \nu(\hat\Gamma({\rm Ad}(g)e_{n+1}, gv)) \\ 
& = & \Gamma ({\rm Ad}(g)e_{n+1}) \\ 
& = &  \zeta(\hat\Gamma g) \ \ = \ \ \mu((\hat\Gamma g,v)S)).
\end{eqnarray*}
Therefore $\xi$ is an equivalence of fiber bundles. 

Let $x$ be a point of $H^n$, and let $g$ in ${\rm Spin}^+(n,1)$ be such that ${\rm Ad}(g)e_{n+1} = x$. 
The fiber of $\mu: \hat\Gamma\backslash {\rm Spin}^+(n,1))\times_{\Delta_n} \complexnos^k \to \Gamma\backslash H^n$ 
over $\Gamma x$ is 
$$F_x(\mu) = \{(\hat\Gamma g, v)S: v\in \complexnos^k\}.$$
The mapping $i_g: \complexnos^k \to F_x(\mu)$ defined by $i_g(v) = (\hat\Gamma g,v)S$ is a linear isomorphism. 
The fiber of $\nu: \hat\Gamma\backslash(H^n \times \complexnos^k) \to \Gamma\backslash H^n$ over $\Gamma x$ is 
$$F_x(\nu) = \{\hat\Gamma(x,v): v\in \complexnos^k\}.$$
The mapping $j_x: \complexnos^k \to F_x(\nu)$ defined by $j_x(v)= \hat\Gamma(x,v)$ is a linear isomorphism. 
Let $\xi_x: F_x(\mu) \to F_x(\nu)$ be the restriction of $\xi$. 
Let $g_\ast: \complexnos^k \to \complexnos^k$ be defined by $g_\ast(v) =gv$. 
Then $g_\ast$ is a linear automorphism. 
We have that $\xi_x i_g = j_x g_\ast$, and so $\xi_x$ is a linear isomorphism. 
Hence $\xi$ is an equivalence of complex vector bundles. 
\end{proof}

Let $\phi$ be an orientation preserving isometry of $\Gamma\backslash H^n$, 
and let $f$ be an element of ${\rm SO}^+(n,1)$ such that $f\Gamma f^{-1} =\Gamma$ and $\phi = \overline f$. 
Suppose that $\phi$ lifts to the spin structure 
$\rho:  \hat\Gamma\backslash {\rm Spin}^+(n,1) \to \Gamma\backslash {\rm SO}^+(n,1)$ on $\Gamma\backslash H^n$. 
Let $\hat f$ be an element of ${\rm Spin}^+(n,1)$ such that ${\rm Ad}(\hat f) = f$. 
Then $\hat f\hat\Gamma \hat f^{-1} = \hat\Gamma$ by Theorem~\ref{thm:2.2}, and the self-diffeomorphism $\phi_\star$ of $\Gamma\backslash {\rm SO}^+(n,1)$ 
induced by $\phi$ lifts to a self-diffeomorphism $\hat\phi_\star$ of $\hat\Gamma\backslash{\rm Spin}^+(n,1)$ induced by $\hat f$. 

The diffeomorphism $\hat\phi_\star$ induces a self-diffeomorphism $\hat\phi$ of the vector bundle 
$\hat\Gamma\backslash {\rm Spin}^+(n,1)\times_{\Delta_n} \complexnos^k$ 
defined by $\hat\phi((\hat \Gamma g,v)S) = (\hat \Gamma \hat f g, v)S$. 
We have that $\mu\hat\phi = \phi \mu$, and so $\hat\phi$ maps fibers of $\mu$ to fibers of $\mu$.  
Let $x$ be a point of $H^n$, 
and let $\hat\phi_x: F_x(\mu) \to F_{fx}(\mu)$ be the restriction of $\hat\phi$. 
Then $\hat\phi_x i_g = i_{\hat f g}$, and so  $\hat\phi_x$ is a linear isomorphism. 
Hence $\hat\phi$ is a vector bundle automorphism of $\mu$. 

Likewise $\hat\phi_\star$ induces a self-diffeomorphism $\hat\phi'$ of the vector bundle $\hat\Gamma\backslash (H^n \times \complexnos^k)$ 
defined by $\hat\phi'(\hat\Gamma(x,v)) = \hat\Gamma(fx, \hat f v)$. 
We have that $\nu\hat\phi' = \phi\nu$, and so $\hat \phi'$ maps fibers of $\nu$ to fibers of $\nu$.  
Let $x$ be a point of $H^n$, and let $\hat \phi'_x: F_x(\nu) \to F_{fx}(\nu)$ be the restriction of $\hat \phi'$. 
Then $\hat \phi'_x j_x = j_{f x}\hat f_\ast$, and so  $\hat \phi'_x$ is a linear isomorphism. 
Hence $\hat \phi'$ is a vector bundle automorphism of $\nu$.

We have that $\xi\hat \phi= \hat \phi'\xi$, since
\begin{eqnarray*}
\xi\hat \phi((\hat \Gamma g,v)S) & = & \xi((\hat \Gamma \hat f g, v)S) \\ 
& = & \hat\Gamma({\rm Ad}(\hat fg)e_{n+1},\hat fgv) \\ 
& = & \hat \phi'(\hat\Gamma({\rm Ad}(g)e_{n+1}, gv)) \ \ = \ \ \hat \phi'\xi((\hat \Gamma g,v)S).
\end{eqnarray*}

\begin{theorem}\label{thm:7.2}  
Let $\Gamma$ be a torsion-free discrete subgroup of ${\rm SO}^+(n,1)$ with $n$ even, and 
let $\phi$ be an orientation preserving isometry of the hyperbolic space form $\Gamma\backslash H^n$. 
Let $f$ be an element of ${\rm SO}^+(n,1)$ such that $f\Gamma f^{-1} = \Gamma$ and $\phi = \overline f$.    
Let $\hat\Gamma$ be a subgroup of ${\rm Spin}^+(n,1)$ such that the double covering ${\rm Ad}: {\rm Spin}^+(n,1) \to {\rm SO}^+(n,1)$ 
maps $\hat\Gamma$ isomorphically onto $\Gamma$, and let $\hat f$ be an element of ${\rm Spin}^+(n,1)$ such that ${\rm Ad}(\hat f) = f$. 
Suppose that $\hat f \hat \Gamma \hat f^{-1} = \hat\Gamma$ and $\phi$ fixes the point $\Gamma x$ of $\Gamma\backslash H^n$. 
Let $\gamma$ be the element of $\Gamma$ such that $\gamma f x = x$, 
and let $\hat\gamma$ be the element of $\hat\Gamma$ such that ${\rm Ad}(\hat\gamma) = \gamma$. 
Let $\hat\phi$ be the vector bundle automorphism of the complex spinor bundle 
$\mu: \hat\Gamma\backslash {\rm Spin}^+(n,1)\times_{\Delta_n} \complexnos^k \to \Gamma\backslash H^n$ induced by $\hat f$, 
and let $\hat \phi_x$ be the restriction of $\hat\phi$ to the fiber of $\mu$ over the point $\Gamma x$. 
Then we have that
$${\rm tr}(\hat \phi_x)  = {\rm tr}(\Delta_{n,1}(\hat\gamma\hat f)).$$
\end{theorem}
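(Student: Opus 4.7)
\emph{Strategy.} The plan is to choose a convenient lift $g \in {\rm Spin}^+(n,1)$ of the point $x$, use the explicit linear isomorphism $i_g : \complexnos^k \to F_x(\mu)$ given by $v \mapsto (\hat\Gamma g, v)S$ from the proof of Theorem~\ref{thm:7.1}, and identify the matrix of $\hat\phi_x$ in this frame. First fix any $g \in {\rm Spin}^+(n,1)$ with ${\rm Ad}(g)e_{n+1} = x$. The defining formula $\hat\phi((\hat\Gamma g, v)S) = (\hat\Gamma \hat f g, v)S$ then gives $\hat\phi_x \circ i_g$ explicitly, and the whole argument reduces to rewriting the pair $(\hat\Gamma \hat f g, v)$ so that its first coordinate becomes $\hat\Gamma g$, making the induced action on the second coordinate readable.

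\emph{Key calculation.} Because $\hat\gamma \in \hat\Gamma$, we have $\hat\Gamma \hat f g = \hat\Gamma \hat\gamma \hat f g$. Set $s_0 = g^{-1}\hat\gamma \hat f g$. The hypothesis $\gamma f x = x$ together with ${\rm Ad}(g)e_{n+1} = x$ yields
$${\rm Ad}(s_0)e_{n+1} = {\rm Ad}(g)^{-1}(\gamma f)(x) = {\rm Ad}(g)^{-1}(x) = e_{n+1},$$
so $s_0 \in \widehat{\rm Spin}(n)$. Using the right $S$-action on pairs, which gives $(\hat\Gamma g s_0, v)S = (\hat\Gamma g, s_0 v)S = (\hat\Gamma g, \Delta_n(s_0)v)S$, I then compute
$$\hat\phi_x(i_g(v)) = (\hat\Gamma \hat\gamma \hat f g, v)S = (\hat\Gamma g s_0, v)S = i_g\bigl(\Delta_n(s_0)v\bigr).$$
Consequently $\hat\phi_x$ is represented by the matrix $\Delta_n(s_0)$ under the frame $i_g$, and therefore ${\rm tr}(\hat\phi_x) = {\rm tr}(\Delta_n(s_0))$.

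\emph{Finish and main obstacle.} The complex spin representation $\Delta_{n,1}$ restricts to $\Delta_n$ on $\widehat{\rm Spin}(n)$, as noted at the end of Section~4, so $\Delta_n(s_0) = \Delta_{n,1}(s_0)$. Applying $\Delta_{n,1}$ to the identity $s_0 = g^{-1}(\hat\gamma \hat f)g$ and using conjugation-invariance of the trace gives ${\rm tr}(\hat\phi_x) = {\rm tr}(\Delta_{n,1}(\hat\gamma \hat f))$, as required. I do not anticipate any substantive obstacle here; the argument is essentially careful bookkeeping of the left and right $S$-actions on pairs $(\hat\Gamma g, v)$. The only point needing attention is that the absorbed factor $\hat\gamma$ is chosen precisely so that $s_0$ lands in $\widehat{\rm Spin}(n)$, which is exactly the subgroup on which $\Delta_n$ and $\Delta_{n,1}$ coincide, legitimating the final passage between the two representations.
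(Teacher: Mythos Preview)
Your proof is correct. The paper's argument instead passes through the bundle equivalence $\xi$ of Theorem~\ref{thm:7.1} to the model $\hat\Gamma\backslash(H^n\times\complexnos^k)$ and computes $\hat\phi'_x$ in the frame $j_x$, where the action is visibly $(\hat\gamma\hat f)_\ast$; your direct computation in the frame $i_g$ is the same calculation pulled back along $\xi_x i_g = j_x g_\ast$, yielding the conjugate matrix $\Delta_{n,1}(g^{-1}\hat\gamma\hat f g)$ and hence the same trace.
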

\begin{proof}
We have that 
$$\hat\phi'(\hat\Gamma(x,v)) = \hat\Gamma(fx,\hat f v) = \hat\Gamma\hat\gamma(fx,\hat f v) = \hat\Gamma(\gamma fx, \hat\gamma \hat f v).$$
and so 
$$\hat\phi'_x j_x = j_{\gamma f x}(\hat\gamma \hat f)_\ast = j_x(\hat\gamma \hat f)_\ast.$$
Hence we have that 
$${\rm tr}(\hat \phi_x) = {\rm tr}(\xi_x^{-1}\hat \phi'_x\xi_x) =  
{\rm tr}(\hat \phi'_x)={\rm tr}(j_x(\hat\gamma\hat f)_\ast j_x^{-1}) = {\rm tr}((\hat\gamma\hat f)_\ast)  = {\rm tr}(\Delta_{n,1}(\hat\gamma\hat f)).$$

\vspace{-.23in}
\end{proof}

\section{The equivariant index for an isometry with only isolated fixed points}

Let $\Gamma$ be a torsion-free discrete subgroup of ${\rm SO}^+(n,1)$ with $n$ even and $n = 2m$. 
Let $\phi$ be an orientation preserving isometry of the hyperbolic space form $\Gamma\backslash H^n$. 
Suppose that $\phi$ fixes the point $P = \Gamma x$ of $\Gamma\backslash H^n$ and $P$ is an isolated fixed point. 
Then the differential $d\phi_P:{\rm T}_P(\Gamma\backslash H^n) \to {\rm T}_P(\Gamma\backslash H^n)$ is an orientation preserving isometry 
that fixes no nonzero tangent vector by the discussion on p.\,472 of \cite{A-B}. 

Let $f$ be an element of ${\rm SO}^+(n,1)$ such that $f\Gamma f^{-1} = \Gamma$ and $\phi = \overline f$.    
Let $\gamma$ be the element of $\Gamma$ such that $\gamma f x = x$. 
Then  $d\phi_P$ isometrically lifts to an orientation preserving isometry ${\rm T}_x(\gamma f): {\rm T}_x(H^n) \to {\rm T}_x(H^n)$,  
defined by ${\rm T}_x(\gamma f) y = \gamma f y$, which fixes no nonzero vector of ${\rm T}_x(H^n)$.    
Hence we may decompose ${\rm T}_x(H^n)$ into a direct sum of Lorentz orthogonal 2-planes 
$${\rm T}_x(H^n) = E_1\oplus E_2\oplus \cdots \oplus E_m$$
which are invariant under ${\rm T}_x(\gamma f)$. 
Let $\{e_k, e_k'\}$ be a Lorentz orthonormal basis of $E_k$ chosen so that the matrix $A$ in ${\rm O}^+(n,1)$, with column vectors $e_1, e_1', \ldots, e_m, e_m', x$, 
has determinant 1. 
Relative to such a basis, ${\rm T}_x(\gamma f)$ acts as a rotation by an angle $\theta_k$ in the 2-plane $E_k$ for each $k$. 
We call the resulting set of angles $\{\theta_k\}$ a {\it coherent system of angles} for $d\phi_P$. 

By Theorem 8.35 of \cite{A-B} and the discussions on p.\,20 of  \cite{A-H} and p.\,175 of \cite{S}, we have the following theorem.  

\begin{theorem}\label{T:spin number} 
Let $\Gamma$ be a torsion-free discrete subgroup of ${\rm SO}^+(n,1)$ with $n$ even and $n = 2m$. 
Let $\phi$ be an orientation preserving isometry of the hyperbolic space form $M = \Gamma\backslash H^n$ 
with only isolated fixed points $\{P\}$. 
Suppose that $M$ admits a spin structure 
and $\phi$ lifts to an automorphism $\hat \phi$, of the same order,  
of the corresponding spinor bundle $\SS$ of $M$. 
The equivariant index ${\rm Spin}(\hat \phi, M)$ is given by 
$${\rm Spin}(\hat\phi, M) = \sum \nu(P)$$
where $P$ ranges over the fixed points of $\phi$ and 
$$\nu(P) = \varepsilon(P,\hat\phi){\bf i}^m2^{-m}\prod_{k = 1}^m\csc(\theta_k/2)$$
where $\theta_1, \ldots, \theta_m$ is a coherent system of angles for $d\phi_P$ and $\varepsilon(P,\hat\phi) = \pm 1$. 

Moreover if $0 < |\theta_k| < \pi$ for each $k$, then
$${\rm tr}(\hat \phi_P) = \varepsilon(P,\hat\phi)\prod_{k = 1}^m 2\cos(\theta_k/2),$$
and so we have that 
$$\nu(P) = {\bf i}^m 2^{-m}{\rm tr}(\hat \phi_P)\prod_{k = 1}^m \csc(\theta_k).$$
\end{theorem}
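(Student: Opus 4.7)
The plan is to deduce the theorem by specializing the general $G$-spin theorem of Atiyah and Singer (Theorem 8.35 of \cite{A-B}, with the explicit local formulas drawn out in \cite{A-H} and \cite{S}) to the hyperbolic setting. The hypotheses of Theorem 8.35---closed oriented Riemannian spin manifold, isometry with only isolated fixed points, lift of the same order to the spin bundle---are precisely the ones given, so one immediately obtains
\[
{\rm Spin}(\hat\phi,M) = \sum_P \nu(P),
\]
and the task reduces to putting $\nu(P)$ into the stated closed form.

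At each fixed point $P$ choose a coherent basis, so that $d\phi_P$ acts as the block-diagonal rotation with angles $\theta_1,\ldots,\theta_m$. The local Atiyah-Bott formula expresses $\nu(P)$ as the chiral character difference ${\rm tr}(\hat\phi_P|_{\SS^+}) - {\rm tr}(\hat\phi_P|_{\SS^-})$ divided by the real determinant $\det(I - d\phi_P)$. By Theorem~\ref{thm:7.2}, the fiber action at $P$ is represented by $\Delta_n(\hat\gamma\hat f)$ for the specific element $\hat\gamma\hat f \in \widehat{\rm Spin}(n)$ that projects to the rotation $\gamma f$. The standard weight decomposition of the half-spin representations of ${\rm Spin}(2m)$ then yields the pair of character identities
\[
{\rm tr}\bigl(\Delta_n^+(\hat\gamma\hat f)\bigr) \mp {\rm tr}\bigl(\Delta_n^-(\hat\gamma\hat f)\bigr) \;=\; \varepsilon(P,\hat\phi) \prod_{k=1}^m \bigl(e^{{\bf i}\theta_k/2} \pm e^{-{\bf i}\theta_k/2}\bigr),
\]
where the \emph{same} sign $\varepsilon(P,\hat\phi) \in \{\pm 1\}$ appears in both versions, because the two possible lifts of the rotation in ${\rm Spin}(2m)$ differ by $-1$ and both characters flip together under that replacement. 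Using the ``$-$'' identity $(2{\bf i})^m\prod\sin(\theta_k/2)$ in the numerator together with $\prod 4\sin^2(\theta_k/2) = \det(I-d\phi_P)$ in the denominator produces the first formula
\[
\nu(P) = \varepsilon(P,\hat\phi)\,{\bf i}^m\,2^{-m}\,\prod_{k=1}^m \csc(\theta_k/2).
\]

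The supplementary identity under $0 < |\theta_k| < \pi$ follows in parallel. The ``$+$'' character identity gives ${\rm tr}(\hat\phi_P) = \varepsilon(P,\hat\phi)\prod 2\cos(\theta_k/2)$, which is nonzero in this range since each cosine is nonzero, and substituting into the first formula via the double-angle identity $\sin(\theta_k) = 2\sin(\theta_k/2)\cos(\theta_k/2)$ gives the second formula $\nu(P) = {\bf i}^m 2^{-m}{\rm tr}(\hat\phi_P)\prod \csc(\theta_k)$. The main subtlety---and the point justifying the explicit representation-theoretic work of \S7---is that the sign $\varepsilon(P,\hat\phi)$ is not visible from character values alone; it is determined only once one identifies the fiber action at $P$ with the specific element $\Delta_n(\hat\gamma\hat f)$, since passing to $-\hat\gamma\hat f$ would flip both chirality characters simultaneously. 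With that concrete identification in hand, the two expressions for $\nu(P)$ are genuinely equivalent under the stated hypothesis.
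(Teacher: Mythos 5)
Your route is the same one the paper takes: the paper offers no independent proof of this theorem, obtaining it by citing Theorem 8.35 of \cite{A-B} together with the discussions in \cite{A-H} and \cite{S}, and your proposal reconstructs exactly that derivation (Atiyah--Bott local formula for the spin complex, plus the weight computation of the half-spin characters, with the fiber action identified through Theorem~\ref{thm:7.2}). Two small imprecisions are harmless: the displayed identity pairs $\mp$ with $\pm$ the wrong way around as literally written (your later use makes the intended pairing clear), and $\hat\gamma\hat f$ lies in ${\rm Spin}^+(n,1)$ rather than in $\widehat{\rm Spin}(n)$ --- it is only conjugate into $\widehat{\rm Spin}(n)$, which suffices because traces are conjugation invariant.

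The genuine gap is in the sign claim, which is precisely the content of the theorem beyond the bare citation. Your argument that ``the two possible lifts differ by $-1$ and both characters flip together'' shows only that the sign in $\chi^{+}-\chi^{-}=\pm(2{\bf i})^m\prod_k\sin(\theta_k/2)$ and the sign in $\chi^{+}+\chi^{-}=\pm\prod_k 2\cos(\theta_k/2)$ differ by a universal constant independent of the choice of lift; it does not show that this constant equals $+1$ once the paper's conventions are fixed (the grading by $C={\bf i}^m\Delta_n(\omega)$, the orientation built into a coherent system of angles, and the way the fiber maps of $\hat\phi$ and the map $\phi^{-1}$ on sections enter the Lefschetz numerator and denominator). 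A check for $m=1$ in the concrete model of \S 5 shows the danger: for the lift $\tilde A=\cos(\theta/2)+\sin(\theta/2)e_1e_2$, which covers rotation by $+\theta$ in the oriented plane $(e_1,e_2)$, one finds $\Delta_2(\tilde A)={\rm diag}(e^{{\bf i}\theta/2},e^{-{\bf i}\theta/2})$ and $C={\rm diag}(-1,1)$, so $\chi^{+}+\chi^{-}=2\cos(\theta/2)$ while $\chi^{+}-\chi^{-}=-2{\bf i}\sin(\theta/2)$; the naive pairing therefore produces an extra universal $-1$ unless it is cancelled by an equally convention-dependent sign elsewhere in the fixed-point formula. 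Settling which way this comes out is exactly the ``subtle sign'' bookkeeping that the paper defers to p.~20 of \cite{A-H} and p.~175 of \cite{S}; your proof needs either that citation or a convention-consistent computation of the local term with the paper's definitions of $\Sp$, $\Sm$ and of coherent angles, not merely the observation that the two chirality characters flip together under $\tilde A\mapsto-\tilde A$.
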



\section{Some hyperbolic 2-manifolds that admit harmonic spinors} \label{S:surface} 

As explained in the introduction, our goal is to show how the equivariant index theorem can be used to show the existence of harmonic spinors. 
In this section, we use this method to prove the existence of non-zero harmonic spinors on two examples of hyperbolic surface; one is hyperelliptic and the other is not. 

\subsection{Hyperelliptic example}

Consider a regular hyperbolic decagon $P$ centered at the center $C = e_3$ of $H^2$ as in Figure \ref{F:decagon}. 
\begin{figure}[b]
{\includegraphics[width=2.4in]{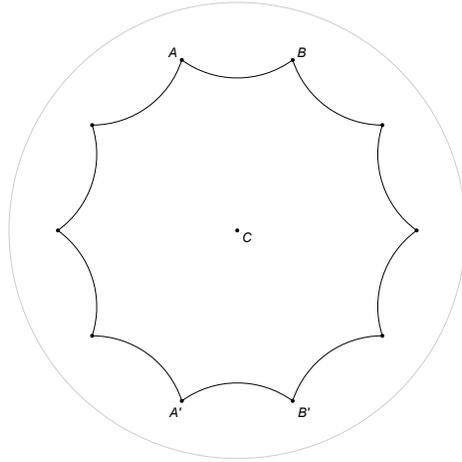}}
\caption{The regular hyperbolic decagon $P$ in $H^2$ viewed from $-e_3$.} \label{F:decagon}
\end{figure}
Let $\Gamma$ be the group generated by the 10 hyperbolic translations of $H^2$ that translate a side of $P$ to its opposite side 
and translates $P$ to an adjacent decagon. 
Then $\Gamma$ is a torsion-free discrete group of isometries of $H^2$ with fundamental polygon $P$ by Poincar\'e's theorem. 
The orbit space $M = \Gamma\backslash H^2$ is a closed orientable hyperbolic surface of genus 2. 

The side-pairing of $P$ by the generators of $\Gamma$ determines a cell decomposition of $M$ with two 0-cells, five 1-cells, and one 2-cell. 
A 0-cell corresponds to a cycle of 5 alternate vertices of $P$.  A 1-cell corresponds to a pair of opposite sides of $P$, and the 2-cell corresponds to $P$. 

Let $f$ be the rotation of $H^2$ of $2\pi/5$ about the center $C$.  
Then $f$ conjugates each generator of $\Gamma$ to another generator of $\Gamma$, and so $f\Gamma f^{-1} = \Gamma$. 
Hence $f$ induces an orientation preserving isometry $\phi$ of $M$ so that $\phi = \overline f$. 
The isometry $\phi$ has order 5 and fixes just 3 points of $M$ corresponding to $C$, 
and the two cycles of vertices of $P$ represented by the points $A$ and $B$; 
moreover, all three fixed points of $\phi$ are isolated. 

The rightmost vertex of $P$ is $(2\sqrt{2+\sqrt{5}},0,2+\sqrt{5})$, 
and the other vertices are found by rotations of multiples of $\pi/5$ about the center $C$. 
The vertices $A, B, A', B'$ are 
$$\left(\epsilon\sqrt{\textstyle{\frac{1}{2}}\big(1+\sqrt{5}\big)},\delta\sqrt{\textstyle{\frac{1}{2}}\big(15+7\sqrt{5}\big)},2+\sqrt{5}\right)$$
with $(\epsilon,\delta) = (-1,1),(1,1), (-1,-1),(1,-1)$, respectively. 
The hyperbolic translation $g_1$ that maps $A$ to $A'$ and $B$ and $B'$ is represented in ${\rm SO}^+(2,1)$ by the matrix
$$\left(\begin{array}{ccc} 1 & 0 & 0 \\ 0 & 6+3\sqrt{5} & -2\sqrt{20+9\sqrt{5}} \\ 0 & -2\sqrt{20+9\sqrt{5}} & 6 + 3\sqrt{5} \end{array}\right).$$
Let $\rho$ be the rotation of $P$ about $C$ by $\pi/5$.  
The other side-pairing maps of $P$ are then obtained by conjugating $g_1$ by multiples of $\rho$. 
These side-pairing maps generate a torsion-free discrete subgroup $\Gamma$ of ${\rm SO}^+(2,1)$ with generators $g_k = \rho^{k-1}g_1\rho^{1-k}$ for $k = 1,\ldots, 10$,  
and defining relators $g_{k+5}g_k$, for $i = 1,\ldots, 5$ and $g_7g_3g_9g_5g_1$ (corresponding to the $A$ cycle) and $g_5g_9g_3g_7g_1$ 
(corresponding to the $B$ cycle).

We lift the rotation $\rho$, 
with respect to the double covering epimorphism 
$$\eta: {\rm SU}(1,1;\complexnos) \to {\rm SO}^+(2,1)$$
defined in \S 5, to 
$$\hat\rho = \left(\begin{array}{cc} e^{{\bf i}\pi/10} & 0 \\ 0 & e^{-{\bf i}\pi/10} \end{array}\right),$$
and we lift the hyperbolic translation $g_1$ to  
$$\hat g_1 = \left(\begin{array}{cc} \textstyle{\frac{1}{2}}\big(-3-\sqrt{5}\big) & -{\bf i}\sqrt{\textstyle{\frac{1}{2}}\big(5+3\sqrt{5}\big)} \\ 
{\bf i}\sqrt{\textstyle{\frac{1}{2}}\big(5+3\sqrt{5}\big)} & \textstyle{\frac{1}{2}}\big(-3-\sqrt{5}\big) \end{array}\right).$$
We then define the lifts of the remaining side-pairing maps by conjugating $\hat g_1$ by powers of $\hat\rho$. 
These 10 elements generated a subgroup $\hat\Gamma$ of ${\rm SU}(1,1;\complexnos)$ isomorphic to $\Gamma$, since 
the defining relations of $\Gamma$ also hold in ${\rm SU}(1,1;\complexnos)$, 
and so we have a spin structure on $M$ by Theorems~\ref{T:spin-hyperbolic} and \ref{thm:5.3}. 

We have that $f = \rho^2$, and so we lift $f$ to $\hat f = -(\hat\rho)^2$. 
As $\hat f \hat \Gamma \hat f^{-1} = \hat \Gamma$, the map $\phi$ 
lifts to the spin structure defined on $M$ by Theorem \ref{thm:2.2}. 
We have that 
$$\hat f = \left(\begin{array}{cc} -e^{{\bf i}\pi/5} & 0 \\ 0 & -e^{-{\bf i}\pi/5} \end{array}\right).$$
The matrix $\hat f$ has order 5 and induces a self-diffeomorphism $\hat\phi_\star$ 
of $\hat\Gamma\backslash{\rm SU}(1,1;\complexnos)$ of order 5.  
Moreover $\hat f$ induces an automorphism $\hat\phi$ of order 5 of the corresponding spinor bundle of $M$. 
By Theorems \ref{thm:5.3} and \ref{thm:7.2}, we have that 
$${\rm tr}(\hat\phi_C) = {\rm tr}(\hat f) = -\textstyle{\frac{1}{2}}\big(1+\sqrt{5}\big).$$
Hence we have that 
$$\nu(C) = {\frac{{\bf i}}{2}}{\rm tr}(\hat\phi_C)\csc(2\pi/5) = -{\bf i}\sqrt{\textstyle{\frac{1}{10}}\big(5+\sqrt{5}\big)}.$$

Let $\gamma_A = g_7g_3$.  
Then $\gamma_A f A = A$, and $\gamma_A f$ is a rotation of $-4\pi/5$ about $A$. 
Let $ \hat\gamma_A = \hat g_7\hat g_3$. 
By Theorems  \ref{thm:5.3} and \ref{thm:7.2}, we have that
$${\rm tr}(\hat\phi_A) = {\rm tr}(\hat\gamma_A\hat f) = -\textstyle{\frac{1}{2}}\big(1-\sqrt{5}\big).$$
Hence we have that 
$$\nu(A) = {\frac{{\bf i}}{2}}{\rm tr}(\hat\phi_A)\csc(-4\pi/5) = -{\bf i}\sqrt{\textstyle{\frac{1}{10}}\big(5-\sqrt{5}\big)}.$$

Let $\gamma_B= g_6g_2$.  
Then $\gamma_B f B = B$, and $\gamma_B f$ is a rotation of $-4\pi/5$ about $B$. 
Let $ \hat\gamma_B = \hat g_6\hat g_2$. 
By Theorems  \ref{thm:5.3} and \ref{thm:7.2}, we have that
$${\rm tr}(\hat\phi_B) = {\rm tr}(\hat\gamma_B\hat f) = -\textstyle{\frac{1}{2}}\big(1-\sqrt{5}\big).$$
Hence $\nu(B) = \nu(A)$. 
By Theorem~\ref{T:spin number}, we have that 
$${\rm Spin}(\hat\phi, M)  =  \nu(A) +\nu(B) + \nu(C)  =  -{\bf i}\sqrt{\textstyle{\frac{1}{2}}\big(5+\sqrt{5}\big)}.$$
Therefore $M$ admits non-zero harmonic spinors by Proposition \ref{P:g-spin}. 
Recall that 
$${\rm Spin}(\hat\phi, M)  = {\rm tr}(\hat\phi, \mathcal{H}^+) - {\rm tr}(\hat\phi, \mathcal{H}^-).$$
Moreover  $\dim \mathcal{H}^\pm = 1$, since 
$$\dim \mathcal{H}^\pm \leq \left\lfloor (2+1)/2\right\rfloor =1.$$
Hence ${\rm Spin}(\hat\phi, M)$ is the difference of two 5th roots of unity.  
In fact, we have 
$$-{\bf i}\sqrt{\textstyle{\frac{1}{2}}\big(5+\sqrt{5}\big)} = e^{-2\pi {\bf i}/5}- e^{2\pi {\bf i}/5}.$$

The fact that $\dim \mathcal{H}^+$ is odd has a topological interpretation.  By~\cite{A}, it is equivalent to saying that $M$, together with the spin structure described above, is non-trivial in the $2$-dimensional ${\rm Spin}$ cobordism group, or in other words that its Arf invariant is non-trivial. It would be interesting to give a purely topological argument for this fact, perhaps based on the $5$-fold symmetry of the surface. 

Note that the surface $M$ has genus 2, and so $M$ is hyperelliptic. 
The theory of harmonic spinors on hyperelliptic Riemann surfaces is well understood \cite{B-S}. 
We next consider an example of a non-hyperelliptic hyperbolic surface that admits non-zero harmonic spinors 
by the combination of Proposition~\ref{P:g-spin} and Theorem~\ref{T:spin number}. 

\subsection{Non-hyperelliptic example}

Consider the Coxeter $(2,5,6)$-triangle $\Delta$ in $H^2$ shown in Figure \ref{F:nonhyperelliptic} with one vertex at the center $e_3$. 
The reflections in the sides of $\Delta$ are represented in ${\rm SO}^+(2,1)$ by the matrices
$$\left(\begin{array}{ccc} 1 & 0 & 0 \\ 0 & -1 & 0 \\ 0 & 0 & 1\end{array}\right), 
\left(\begin{array}{ccc} \frac{1}{2} & \frac{\sqrt{3}}{2} & 0 \\ \frac{\sqrt{3}}{2} & -\frac{1}{2}  & 0 \\ 0 & 0 & 1\end{array}\right),
\left(\begin{array}{ccc} -2 - \sqrt{5}& 0 & 2 \sqrt{2 + \sqrt{5}}\\  0 &  1&  0 \\ -2 \sqrt{2 + \sqrt{5}} & 0 & 2 + \sqrt{5} \end{array}\right).$$
These three reflections generate a discrete subgroup $\Gamma_0$ of ${\rm SO}^+(2,1)$. 
Consider the subgroup $\Gamma$ of $\Gamma_0$ of index 60 whose fundamental domain is the 18-sided hyperbolic polygon $P$ shown in Figure \ref{F:nonhyperelliptic}.  
The polygon $P$ is subdivided into 60 copies of $\Delta$ and is invariant under the rotation $\rho$ of $2\pi/3$ about the center $e_3$ of $P$. 
A set of generators for $\Gamma$ is a set of side-pairing transformations for $P$ where sides $A, B, C$ are paired to sides $A', B', C'$, respectively, by hyperbolic translations 
and the remaining side-pair transformations are such that the side-pairing is invariant under the rotation $\rho$. 
The orbit space $M=\Gamma\backslash H^2$ is an orientable hyperbolic surface of genus 3 by Poincar\'e's theorem. 

The rotation $\rho$ induces an orientation preserving isometry $\phi = \overline{\rho}$ of $M$ of order 3 
with exactly 5 fixed points all of which are isolated. 
The fixed points are represented by the center of $P$ and the 4 cycles of vertices of $P$. 
The quotient of  $M$ under the action of $\phi$ is an orientable hyperbolic 2-orbifold $\mathcal{O}$ of genus 0 with 5 cone points of order 3
corresponding to the fixed points of $\phi$.  
A fundamental polygon for the 2-orbifold $\mathcal{O}$ is the hyperbolic octagon $Q$ that is the third of $P$ between the two radii drawn in Figure \ref{F:nonhyperelliptic}. 
The orbifold $\mathcal{O}$ is constructed by folding $Q$ along the midline from the center and then gluing together the sides. 
Hence $M$ is trigonal, and therefore $M$ is non-hyperelliptic \cite[p.\,124]{shokurov:curves}.

The isometry $\phi$ acts as a rotation by an angle of $2\pi/3$ about the fixed point corresponding to the center of $P$, and $\phi$ acts as a rotation by an angle of $-2\pi/3$ 
about each of the fixed points corresponding to the 4 cycles of vertices of $P$. 
The isometry $\phi$ fixes a spin structure of $M$ by Proposition 5.2 of \cite{A}.  
In fact $\phi$ fixes a unique spin structure of $M$. 
We lift $\phi$ to an automorphism $\hat\phi$ of the corresponding spinor bundle of $M$ of order 3,  
and by Theorem~\ref{T:spin number}, we compute that 
$${\rm Spin}(\hat\phi, M) = -{\bf i}\sqrt{3}.$$
Therefore $M$ admits non-zero harmonic spinors by Proposition \ref{P:g-spin}.  
As described in~\cite[\S 2.2]{hitchin:spinors}, the space of harmonic spinors $\mathcal{H}^+$ on a genus $3$ surface has dimension $0$, $1$, or $2$. 
The latter case occurs only for hyperelliptic surfaces, and so ${\rm dim} \mathcal{H}^\pm = 1$ for this example. 
Therefore ${\rm Spin}(\hat\phi, M)$ is the difference of two 3rd roots of unity.  
In fact, we have
$$-{\bf i}\sqrt{3} = e^{-2\pi {\bf i}/3}- e^{2\pi {\bf i}/3}.$$
\begin{figure}[t] 
{\includegraphics[width=2.4in]{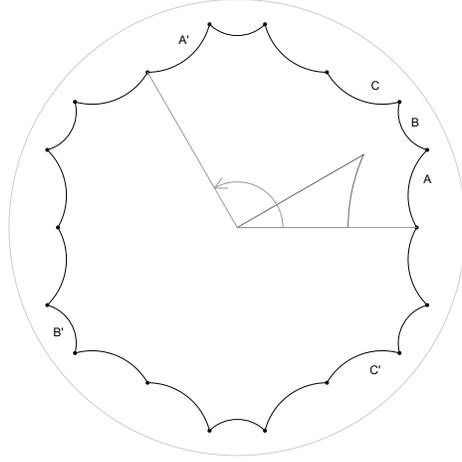}}
\caption{An 18-sided hyperbolic polygon $P$ in $H^2$ viewed from $-e_3$.}\label{F:nonhyperelliptic}
\end{figure}
\section{Harmonic spinors on the Davis hyperbolic 4-manifold} 

In this section, we combine Proposition~\ref{P:g-spin} and Theorem~\ref{T:spin number} to show that the Davis hyperbolic 4-manifold \cite{davis:hyperbolic} 
admits non-zero harmonic spinors. 
The Davis manifold $M$ is constructed geometrically by gluing together the opposite sides of a regular 120-cell $P$ in $H^4$ 
by hyperbolic translations whose axis passes through the centers of the opposite sides. 

\subsection{The 120-cell}

The 120-cell $P$ has 600 vertices, 1200 edges, 720 ridges (regular pentagons), and 120 sides (regular dodecahedra). 
The side-pairing defining the Davis manifold $M$ identifies all 600 vertices to one vertex cycle, identifies 20 edges within each cycle of edges, 
identifies 5 ridges within each cycle of ridges, and identifies 2 sides with each cycle of sides.  
Therefore, the cell structure of $P$ projects to a cell structure of $M$ consisting of one $0$-cell, 60 1-cells, 144 2-cells, 60 3-cells and one 4-cell. 

Define
$$\tau = (1+\sqrt{5})/2 \quad \hbox{and} \quad \kappa = \sqrt{1+3\tau}.$$
We will work with the regular 120-cell $P$ centered at the center $e_5$ of $H^4$ which is barycentrically subdivided by a $(5,3,3,5)$ Coxeter 4-simplex $\Delta$ in $H^4$ 
whose vertices are given by the equations
\begin{eqnarray*}
v_1 & = & \big((2+3\tau)\kappa, (1+\tau)\kappa,0,\kappa, 5+8\tau\big), \\
v_2 & = & \big((1+\tau)\kappa, 0,0,2+3\tau\big), \\
v_3 & = & \big(\tau\kappa,(-1+2\tau)\kappa/5,(3-\tau)\kappa/5,0,1+2\tau\big), \\
v_4 & = & (\kappa, 0, 0, 0, 1+\tau), \\
v_5 & = & (0, 0, 0, 0, 1).
\end{eqnarray*}
The vertices $v_1,\ldots, v_5$ are, respectively, a vertex, center of an edge, center of a ridge, center of a side, and the center of the 120-cell $P$. 
The Lorentz normal vector $s_i$ of the side of $\Delta$ opposite the vertex $v_i$ is given by the equations
\begin{eqnarray*}
s_1 & = & (0,0,0,-1,0), \\
s_2 & = & \big(0,(1-\tau)/2,1/2,\tau/2,0), \\
s_3 & = & (0,0,-1,0,0), \\
s_4 & = & \big((1-\tau)/2,\tau/2,1/2,0,0\big), \\
s_5 & = & (1+\tau,0 ,0 ,0, \kappa).
\end{eqnarray*}

\subsection{Davis manifold construction}
The 120 side-pairing maps for the 120-cell $P$ defining the Davis manifold $M$ are represented by symmetric Lorentzian $5\times 5$ matrices of the form 

$$\left(\begin{array}{ccccc} 
1+\frac{a_1^2}{1+a_5} & \frac{a_1a_2}{1+a_5} & \frac{a_1a_3}{1+a_5} & \frac{a_1a_4}{1+a_5} & a_1 \\
 \frac{a_1a_2}{1+a_5} &1+\frac{a_2^2}{1+a_5} & \frac{a_2a_3}{1+a_5} & \frac{a_2a_4}{1+a_5} & a_2 \\
 \frac{a_1a_3}{1+a_5} & \frac{a_2a_3}{1+a_5} &1+\frac{a_3^2}{1+a_5} & \frac{a_3a_4}{1+a_5} & a_3 \\
 \frac{a_1a_4}{1+a_5} & \frac{a_2a_4}{1+a_5} & \frac{a_3a_4}{1+a_5} &1+\frac{a_4^2}{1+a_5} & a_4 \\
 a_1 & a_2 & a_3 & a_4 & a_5
\end{array}\right).$$
The vector $(a_1, a_2, a_3, a_4, a_5)$ is the center of the 120-cell adjacent to $P$ that is the translated image of $P$ by 
the corresponding side-pairing map of $P$. 
All the 120 matrices representing the side-pairing maps have $a_5 = 3+6\tau$. 

We call $(a_1,a_2,a_3,a_4)$ the {\it direction vector} for the corresponding side-pairing map of $P$. 
The direction vectors for the 120 side-pairing maps are the 60 vectors listed in Table 1 of \cite {ratcliffe-tschantz:davis} together with their negatives. 
The 120 side-pairing maps $g_1, \ldots, g_{120}$ of $P$ are ordered so that $g_1,\ldots, g_{60}$ have the same order as their direction vectors in Table 1 
and $g_{61}, \ldots, g_{120}$ are ordered so that $g_{121-i}$ has direction vector equal to the negative of the $i$th vector in Table 1.

By Poincar\'e's fundamental polyhedron theorem, the 120 matrices representing the side-pairing maps $g_1, \ldots, g_{120}$ generate 
a discrete subgroup $\Gamma$ of ${\rm SO}^+(4,1)$ such that $M = \Gamma\backslash H^4$. 
Moreover $\Gamma$ has a presentation with 120 generators $x_1, \ldots, x_{120}$, 
corresponding to the 120 side-pairing maps of $P$ in the same order above, 60 side-pairing relations $x_ix_{121-i} = 1$, with $i = 1,\ldots, 60$, 
and 144 ridge cycle relations $x_ix_jx_kx_\ell x_m = 1$ where $(i,j,k,\ell, m)$ is one of the 5-tuples listed in Table 2 of \cite {ratcliffe-tschantz:davis}.

\subsection{Symmetric spin structure}

We next describe a symmetric spin structure on the Davis manifold $M$ with which we will work. 
The direction vector of the first side-pairing map $g_1$ is 
$$((2+2\tau)\kappa, 0,0,0).$$
The matrix in ${\rm SO}^+(4,1)$ representing $g_1$ lifts, with respect to the double covering 
$$\eta: {\rm SU}(1,1; \mathbb{H}) \to {\rm SO}^+(4,1),$$ 
to the real $2\times 2$ matrix 
$$\hat g_1 = \left(\begin{array}{cc}
-1-\tau & \kappa \\
\kappa & -1-\tau
\end{array}\right).$$

The four reflections $\rho_1, \ldots, \rho_4$ in the sides of $\Delta$ that contain the vertex $v_5 = e_5$ 
generated the group ${\rm Sym}(P)$ of symmetries of $P$, which is a group of order 14,400. 
The group ${\rm SU}(1,1; \mathbb{H})$ is a subgroup of index 2 in the group 
$${\rm U}(1,1; \mathbb{H}) = \{A \in \mathbb{H}(2): A^\ast J A = \pm J\}$$
and $\eta: {\rm SU}(1,1; \mathbb{H}) \to {\rm SO}^+(4,1)$ extends to a double covering epimorphism
$$\tilde\eta :{\rm U}(1,1; \mathbb{H}) \to {\rm O}^+(4,1)$$
such that $\rho_1, \ldots, \rho_4$ lift, respectively, to $2\times 2$ matrices $R_1, \ldots, R_4$ of the form 
$$\left(\begin{array}{rr}
r & q \\
-\overline{q} & -r
\end{array}\right)$$
with $r \in \realnos, q \in \mathbb{H}$, and $|q|^2-r^2 = 1$, and if $q = a+ b{\bf i} + c{\bf j} + d{\bf k}$, 
then $(a,b,c,d,r)$ is the corresponding normal vector $s_i$ given above. 

We will not describe the extension $\tilde\eta$, since it involves ${\rm Pin}$ groups which we have avoided discussing. 
It suffices to say that the matrices $R_1, \ldots, R_4$ generate a group $\widetilde{\rm Sym}(P)$ of order 28,800 
and the subgroup $\widetilde{\rm Sym}(P)_0$ consisting of even products of $R_1, \ldots, R_4$ has order 14,400,  
and $\eta: {\rm SU}(1,1; \mathbb{H}) \to {\rm SO}^+(4,1)$ maps  $\widetilde{\rm Sym}(P)_0$ onto 
the subgroup ${\rm Sym}(P)_0$ of ${\rm Sym}(P)$ of index 2 consisting of the orientation preserving symmetries of $P$. 

The orbit of $\hat g_1$ under the action of $\widetilde{\rm Sym}(P)_0$ by conjugation consists of 120 matrices $\hat g_1, \ldots, \hat g_{120}$
such that $\eta(\hat g_i) = g_i$ for each $i = 1,\ldots, 120$. 
The matrices $\hat g_1, \ldots, \hat g_{120}$ satisfy the same relations as $g_1,\ldots, g_{120}$. 
Hence $\eta: {\rm SU}(1,1; \mathbb{H}) \to {\rm SO}^+(4,1)$ maps the group $\hat\Gamma$ generated by $\hat g_1, \ldots, \hat g_{120}$ 
isomorphically onto $\Gamma$. 
Therefore $M$ has a spin structure that is invariant under ${\rm Sym}(P)_0$ by Theorems \ref{T:spin-hyperbolic}, \ref{thm:2.2}, and \ref{T:AdG}. 
This spin structure is unique, since $-\hat g_1, \ldots, -\hat g_{120}$ do not satisfy the same relations as $g_1,\ldots, g_{120}$ 
since the ridge cycle relations are of odd length. 

\subsection{Analysis of isometries of $M$ of odd order}
Let ${\rm Isom}_0(M)$ be the group of orientation preserving isometries of the Davis manifold $M$ (see \cite{ratcliffe-tschantz:davis}), 
and let $\phi$ be an element of ${\rm Isom}_0(M)$ of odd order with only isolated fixed points. 
The possible orders of $\phi$ are 3, 5, and 15. 
If $\phi$ has order 3 or 15, then $\phi$ has only 2 fixed points, namely, 
the points represented by the center of $P$ and the cycle of vertices of $P$. 

The isometry $\phi$ lifts to an automorphism $\hat\phi$, of the same order, of the spinor bundle of $M$,  
with respect to the symmetric spin structure on $M$ described above.  
If the order of $\phi$ is 3, then $\phi$ is unique up to conjugation in ${\rm Isom}_0(M)$, 
and we compute that ${\rm Spin}(\hat\phi,M) = 0$. 
If the order of $\phi$ is 5, then $\phi$ lies in one of 3 possible conjugacy classes of ${\rm Isom}_0(M)$. 
The constant value of ${\rm Spin}(\hat\phi,M)$ on these conjugacy classes is $0, -5\sqrt{5}$, and $5\sqrt{5}$. 
The latter two conjugacy classes determine a single conjugacy class of subgroups of ${\rm Isom}_0(M)$ of order 5. 
If the order of $\phi$ is 15, then $\phi$ lies in one of 2 possible conjugacy classes of ${\rm Isom}_0(M)$. 
The constant value of ${\rm Spin}(\hat\phi,M)$ on these conjugacy classes is $-\sqrt{5}$ and $\sqrt{5}$. 
These two conjugacy classes determine a single conjugacy class of subgroups of ${\rm Isom}_0(M)$ of order 15. 
If the order of $\phi$ is 15, then each power $\phi^k$ for $k =1,\ldots, 14$ has isolated fixed points, 
and ${\rm Spin}(\hat\phi^k, M) = \pm 5 \sqrt{5}$ for $k =3,6, 9, 12$.  


\subsection{Order 15 example}\label{S:10.5}
Let $f$ be the symmetry of $P$ of order 15 represented in ${\rm SO}^+(4,1)$ by the matrix
$$\left(\begin{array}{ccccc}
0 & -1 & 0 & 0 & 0 \\
\frac{\tau}{2} & 0 & -\frac{1}{2} & -\frac{1}{2}+\frac{\tau}{2} & 0 \\
-\frac{1}{2}+\frac{\tau}{2} & 0 & \frac{\tau}{2} & \frac{1}{2} & 0 \\
-\frac{1}{2} & 0 & \frac{1}{2}-\frac{\tau}{2} & \frac{\tau}{2} & 0 \\
0 & 0 & 0 & 0 & 1
\end{array}\right).$$
Then $f$ conjugates each generator of $\Gamma$ to another generator of $\Gamma$, and so $f\Gamma f^{-1} =\Gamma$. 
Hence $f$ induces an orientation preserving isometry $\phi$ of $M$ so that $\phi = \overline f$. 
The isometry $\phi$ has order 15 and fixes just 2 points $C$ and $A$ of $M$, with $C$ represented by the center $e_5$ of $P$, and 
$A$ represented by the cycle of vertices of $P$; 
moreover the two fixed points of $\phi$ are isolated. 
The angles of rotation of $\phi$ are $-8\pi/15$ and $2\pi/15$ about $C$, and $-14\pi/15$ and $4\pi/15$ about $A$. 

The symmetry $f$ lifts, with respect to $\eta: {\rm SU}(1,1; \mathbb{H}) \to {\rm SO}^+(4,1)$, to the matrix
$$\hat f = \left(\begin{array}{cc}
-\frac{\tau}{2}-\frac{1}{2}{\bf i}+\big(\frac{1}{2}-\frac{\tau}{2}\big){\bf j} & 0 \\
0 & -\frac{1}{2}+\frac{\tau}{2}{\bf i} +\big(\frac{1}{2}-\frac{\tau}{2}\big){\bf k}
\end{array}\right).$$
The matrix $\hat f$ has order 15 and induces a self-diffeomorphism $\hat \phi_\star$ of $\hat\Gamma\backslash {\rm SU}(1,1;\mathbb{H})$. 
Moreover $\hat f$ induces an automorphism $\hat\phi$ of order 15 of the corresponding spinor bundle of $M$. 
By Theorems \ref{T:Ghat} and \ref{thm:7.2}, we have that 
$${\rm tr}(\hat\phi_C) = {\rm tr}(\Psi_2(\hat f)) = -1 -\tau.$$
Hence we have that 
$$\nu(C) = {\bf i}^22^{-2}{\rm tr}(\hat\phi_C)\csc(-8\pi/15)\csc(2\pi/15) = -\tau.$$
Let $\gamma_A = g_2 g_{87} g_{46} g_{71} g_{79} g_{10} g_{115} g_{16} g_{107} g_{15} g_{73}$. 
Then $\gamma_Af v_1 = v_1$.  Let $\hat\gamma_A$ be the product of the corresponding lifts $\hat g_i$. 
By Theorems \ref{T:Ghat} and \ref{thm:7.2}, we have that 
$${\rm tr}(\hat\phi_A) = {\rm tr}(\Psi_2(\hat\gamma_A\hat f)) = -2 +\tau.$$
Hence we have that 
$$\nu(A) = {\bf i}^22^{-2}{\rm tr}(\hat\phi_A)\csc(-14\pi/15)\csc(4\pi/15) = 1-\tau.$$
By Theorem \ref{T:spin number}, we have that
$${\rm Spin}(\hat\phi, M) = \nu(A) + \nu(C) = 1-2\tau = -\sqrt{5}.$$
Thus the Davis manifold $M$ admits non-zero harmonic spinors by Proposition \ref{P:g-spin}, 
and so $\dim(\mathcal{H}^+) \geq 1$.  
We will obtain a better lower for the dimension of the space $\mathcal{H}^+$ 
of positive harmonic spinors on $M$ from our next example. 


\subsection{Order 5 example}
We maintain the notation from the previous example. 
Then $f^3$ has order $5$ and is represented in ${\rm SO}^+(4,1)$ by the matrix
$$\left(\begin{array}{ccccc}
-\frac{1}{2}+\frac{\tau}{2} & \frac{\tau}{2} & \frac{1}{2} & 0 & 0 \\
-\frac{\tau}{2} & -\frac{1}{2}+\frac{\tau}{2}  & 0 & -\frac{1}{2}& 0 \\
-\frac{1}{2} & 0 & -\frac{1}{2}+\frac{\tau}{2} & \frac{\tau}{2} & 0 \\
0 & \frac{1}{2}  & -\frac{\tau}{2} & -\frac{1}{2}+\frac{\tau}{2} & 0 \\
0 & 0 & 0 & 0 & 1
\end{array}\right).$$
The corresponding isometry $\phi^3$ of $M$ has order 5 
and fixes exactly 26 points $C, A$ and $B_1,\ldots, B_{24}$ of $M$, with $C$ represented by the center $e_5$ of $P$, and 
$A$ represented by the cycle of vertices of $P$, and $B_1, \ldots, B_{24}$ represented by 24 ridge center cycles 
one of which is the cycle of $v_3$;   
moreover all the fixed points of $\phi^3$ are isolated. 
The angles of rotation of $\phi^3$ are $2\pi/5$ and $2\pi/5$ about $C$, and $-4\pi/5$ and $4\pi/5$ about $A$ 
and $-2\pi/5$ and $4\pi/5$ about $B_i$ for each $i$. 

For each side $S$ of $P$ there are two opposite sides of $S$ whose centers $\alpha$ and $\beta$ 
represent two of the fixed points of $\phi^3$. 
In fact, the 2-dimensional cross-section of $P$ passing through $\alpha$ and $\beta$ and the center $C$ of $P$ 
is the decagon in Figure \ref{F:decagon} with the same identification pattern and the same order 5 symmetry and 
whose cycles of vertices are the ridge center cycles of $\alpha$ and $\beta$ (labeled $A$ and $B$ in Figure \ref{F:decagon}). 

The symmetry $f^3$ lifts, with respect to $\eta: {\rm SU}(1,1; \mathbb{H}) \to {\rm SO}^+(4,1)$, to the matrix
$$\hat f^3 = \left(\begin{array}{cc}
-\frac{1}{2}+\frac{\tau}{2}-\frac{\tau}{2}{\bf i} -\frac{1}{2}{\bf j} & 0 \\
0 & 1\end{array}\right).$$
The matrix $\hat f^3$ has order 5 and induces the self-diffeomorphism $\hat \phi_\star^3$ of $\hat\Gamma\backslash {\rm SU}(1,1;\mathbb{H})$. 
Moreover $\hat f^3$ induces the automorphism $\hat\phi^3$ of order 5 of the corresponding spinor bundle of $M$. 
By Theorems \ref{T:Ghat} and \ref{thm:7.2}, we have that 
$${\rm tr}((\hat\phi^3)_C) = {\rm tr}(\Psi_2(\hat f^3)) = 1 +\tau.$$
Hence we have that 
$$\nu(C) = {\bf i}^22^{-2}{\rm tr}((\hat\phi^3)_C)\csc(2\pi/5)\csc(2\pi/5) = -\frac{2}{5}-\frac{\tau}{5}.$$
Let $\gamma_A = g_3 g_{86} g_{46} g_{73} g_{79} g_{10} g_{113} g_{18} g_{107} g_{15} g_{71}g_{89}$. 
Then $\gamma_Af ^3v_1 = v_1$.  Let $\hat\gamma_A$ be the product of the corresponding lifts $\hat g_i$. 
By Theorems \ref{T:Ghat} and \ref{thm:7.2}, we have that 
$${\rm tr}((\hat\phi^3)_A) = {\rm tr}(\Psi_2(\hat\gamma_A\hat f^3)) = 2 -\tau.$$
Hence we have that 
$$\nu(A) = {\bf i}^22^{-2}{\rm tr}((\hat\phi^3)_A)\csc(-4\pi/5)\csc(4\pi/5) = \frac{3}{5}-\frac{\tau}{5}.$$
Let $B$ be one of the points $B_1, \ldots, B_{24}$.  
We found that ${\rm tr}((\hat\phi^3)_B) = -1$ and 
$$\nu(B) = {\bf i}^22^{-2}{\rm tr}((\hat\phi^3)_B)\csc(-2\pi/5)\csc(4\pi/5) = \frac{1}{5}-\frac{2\tau}{5}.$$
By Theorem \ref{T:spin number}, we have that
$${\rm Spin}(\hat\phi^3, M) = \nu(A) + 24\hspace{.01in}\nu(B)+ \nu(C) =  5 - 10\tau = -5\sqrt{5}.$$
Recall that 
$${\rm Spin}(\hat\phi^3, M) = {\tr}(\hat\phi^3,\mathcal{H}^+) -  {\tr}(\hat\phi^3,\mathcal{H}^-).$$
Let $d = \dim\mathcal{H}^\pm$.  Then ${\rm tr}(\hat\phi^3,\mathcal{H}^\pm)$ is a sum of $d$ 5th roots of unity. 
The number $-5\sqrt{5}$ cannot be written as the sum of nine 5th roots of unity minus the sum 
of nine 5th roots of unity, and so $d \geq 10$.  We will obtain the lower bound $d \geq 10$ in a more elegant manner 
in the next section (see Corollary \ref{C:10.2}). 

\subsection{Spinor-index}
Let $G$ be the cyclic group generated by the order 15 automorphism $\hat\phi$ in \S \ref{S:10.5}. 
Then $G$ acts on ${\mathcal H}^+$ and ${\mathcal H}^-$. 
We get two characters of $G$ whose difference in the representation ring $R(G)$ is the spinor-index ${\rm Spin}(G,M)$ of the action. 
The value of ${\rm Spin}(G,M)$ at an element $g$ of $G$ is 
$${\rm Spin}(g,M)= {\rm tr}(g,\mathcal{H}^+) - {\rm tr}(g,\mathcal{H}^-).$$
We have that $R(G) \cong \mathbb{Z}[x]/(x^{15}-1)$. 
The next theorem neatly summarizes our computations concerning the action of $G$ 
on $\mathcal{H} = {\mathcal H}^+\oplus {\mathcal H}^-$.

\begin{theorem}\label{T:10.1} 
The spinor-index ${\rm Spin}(G,M)$ in $R(G)$ corresponds to the coset $[p(x)]$ in $\mathbb{Z}[x]/(x^{15}-1)$ where
$$p(x) = 2x^2+x^3+2x^7+2x^8+x^{12}+2x^{13}-2x-2x^4-x^6-x^9-2x^{11}-2x^{14}$$
and 
$${\rm Spin}(\hat\phi^k,M) = p(e^{2k\pi{\bf i}/15})\ \ \hbox{for}\ \ k = 1, 2, \ldots, 15.$$
\end{theorem}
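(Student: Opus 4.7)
The plan is to compute the equivariant spinor-index $\mathrm{Spin}(\hat\phi^k, M)$ for each $k = 1, 2, \ldots, 15$ and then recover the class $\mathrm{Spin}(G,M)$ in $R(G) \cong \integers[x]/(x^{15}-1)$ by finite Fourier inversion. Writing $\mathrm{Spin}(G,M) = \sum_{j=0}^{14} a_j [\chi_j]$, where $\chi_j$ is the irreducible character sending the generator $\hat\phi$ to $e^{2\pi{\bf i}j/15}$, the integer $a_j$ is precisely the coefficient of $x^j$ in the target polynomial $p(x)$, recovered by
$$a_j = \frac{1}{15}\sum_{k=0}^{14}\mathrm{Spin}(\hat\phi^k, M)\,e^{-2\pi{\bf i}jk/15}.$$
Since $p(x)$ has degree less than $15$, it is determined by its values at the $15$-th roots of unity, so it suffices to check that the fifteen computed character values coincide with $p(e^{2\pi{\bf i}k/15})$ for the displayed polynomial.

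Each character value is computed via Theorem~\ref{T:spin number} as a sum of local contributions $\nu(P)$ over the finite fixed-point set of $\phi^k$. Several cases are already in hand: $\hat\phi^{15} = \mathrm{id}$ yields $\mathrm{Spin}(M) = 0$ by the vanishing of $\Ahat(M)$ for hyperbolic $M$; the order-3 powers $\phi^5,\phi^{10}$ yield $0$ by subsection 10.4; $\phi^3$ yields $-5\sqrt 5$ by subsection 10.6; and $\phi^1$ yields $-\sqrt 5$ by subsection 10.5. Every remaining power is handled by the identical scheme: for each representative fixed point $x \in H^4$ of $\phi^k$ modulo $\Gamma$, exhibit an element $\gamma \in \Gamma$ with $\gamma f^k x = x$ as an explicit word in the generators $g_1,\ldots,g_{120}$, lift to $\hat\gamma$ via the symmetric lifts $\hat g_i$, compute $\mathrm{tr}(\Psi_2(\hat\gamma\hat f^k))$ by Theorems~\ref{T:Ghat} and~\ref{thm:7.2}, and apply
$$\nu(P) = -\tfrac{1}{4}\,\mathrm{tr}(\hat\phi^k_P)\,\csc(\theta_1^{(P,k)})\,\csc(\theta_2^{(P,k)}),$$
where $\{\theta_1^{(P,k)},\theta_2^{(P,k)}\}$ is a coherent system of angles for $d(\phi^k)_P$.

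The main obstacle is the bookkeeping of fixed points for the order-5 powers, where each $\phi^{3\ell}$ ($\ell=1,2,3,4$) has $26$ fixed points, $24$ of which arise from ridge-center cycles of the $120$-cell; these cycles must be enumerated, equipped with coherent angle systems, and each matched with a word in the generators of $\Gamma$ using Tables 1--2 of \cite{ratcliffe-tschantz:davis}. For the eight powers $\phi^k$ with $\gcd(k,15)=1$ the fixed set is only $\{C,A\}$, and the computation is a direct variant of subsection~\ref{S:10.5} with $\hat f^k$ in place of $\hat f$; the main point is to keep track of signs and angles, since the coherent angles at $C$ for $\phi^k$ are $-8k\pi/15$ and $2k\pi/15$, while at $A$ they are $-14k\pi/15$ and $4k\pi/15$, and the returning element $\gamma_A$ must be updated for each $k$. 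Once all fifteen values of $\mathrm{Spin}(\hat\phi^k, M)$ are tabulated, the Fourier inversion is a mechanical arithmetic step that produces the integer coefficients of $p(x)$ displayed in the statement, thereby establishing the identity in $R(G)$ and the promised evaluation formula.
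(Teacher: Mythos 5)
Your proposal is correct and is essentially the paper's argument: the substance in both is the fixed-point computation of all fifteen values ${\rm Spin}(\hat\phi^k,M)$ via Theorem~\ref{T:spin number} with explicit returning elements and their lifts, after which the class in $R(G)$ is pinned down by the fact that an integer polynomial of degree at most $14$ is determined by its values at the $15$th roots of unity. The only cosmetic difference is that you recover the coefficients by finite Fourier inversion, while the paper phrases the same step as uniqueness of the interpolating representative $p(x)$ modulo $x^{15}-1$.
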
 
\begin{proof}
Our computations show that ${\rm Spin}(\hat\phi^k,M) = p(e^{2k\pi{\bf i}/15})$ for $k = 1, 2, \ldots, 15$.
Every coset in $\mathbb{Z}[x]/(x^{15}-1)$ is represented by a unique polynomial in $\mathbb{Z}[x]$ of degree at most 14. 
Suppose $q(x)$ is another polynomial in $\mathbb{Z}[x]$ of degree at most 14 such that 
$${\rm Spin}(\hat\phi^k,M) = q(e^{2k\pi{\bf i}/15})\ \ \hbox{for}\ \ k = 1, 2, \ldots, 15.$$
Then $p(x)-q(x)$ has $e^{2k\pi{\bf i}/15}$ for $k = 1, 2, \ldots, 15$ as roots, and so $p(x) - q(x) = 0$.  
Thus $p(x) = q(x)$, and therefore  ${\rm Spin}(G,M)$ corresponds to the coset $[p(x)]$ in $\mathbb{Z}[x]/(x^{15}-1)$. 
\end{proof}

The next corollary completes the proof of Theorem \ref{T:davis}.
\begin{corollary} \label{C:10.2}%
The (complex) dimension of $\mathcal{H}$ is at least $20$. 
\end{corollary}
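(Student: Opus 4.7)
The plan is to extract a lower bound on $\dim\mathcal{H}$ from the isotypic decomposition of $\mathcal{H}^+\oplus\mathcal{H}^-$ as a $G$-representation, reading the multiplicities off the polynomial $p(x)$ in Theorem~\ref{T:10.1}. Since $G$ is cyclic of order $15$, its irreducible complex characters are $\chi_k\colon \hat\phi\mapsto e^{2k\pi{\bf i}/15}$ for $k=0,1,\ldots,14$, corresponding under the isomorphism $R(G)\cong \mathbb{Z}[x]/(x^{15}-1)$ to the cosets $[x^k]$. I would write
$$[\mathcal{H}^{\pm}] \;=\; \sum_{k=0}^{14} n_k^{\pm}\,\chi_k$$
with $n_k^{\pm}$ non-negative integers, so that $\dim\mathcal{H}=\sum_{k=0}^{14}\bigl(n_k^++n_k^-\bigr)$.

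By Theorem~\ref{T:10.1}, the class $[\mathcal{H}^+]-[\mathcal{H}^-]$ in $R(G)$ corresponds to the coset $[p(x)]$. Since every coset in $\mathbb{Z}[x]/(x^{15}-1)$ has a unique representative of degree at most $14$, and $p(x)$ is already such a representative, equating coefficients gives $n_k^+ - n_k^- = c_k$, where $c_k$ denotes the coefficient of $x^k$ in $p(x)$. Because $n_k^{\pm}\geq 0$, we have the elementary bound
$$n_k^+ + n_k^- \;\geq\; \bigl|n_k^+ - n_k^-\bigr| \;=\; |c_k|,$$
and summing over $k$ yields
$$\dim\mathcal{H} \;\geq\; \sum_{k=0}^{14} |c_k|.$$

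To finish, I would just read the coefficients of $p(x)$ off its explicit expression: $(c_0,c_1,\ldots,c_{14})=(0,-2,2,1,-2,0,-1,2,2,-1,0,-2,1,2,-2)$, whose absolute values sum to $0+2+2+1+2+0+1+2+2+1+0+2+1+2+2 = 20$. Hence $\dim\mathcal{H}\geq 20$, as required. There is no real obstacle: all the substantive work has been done in Theorem~\ref{T:10.1}, and the corollary is just the observation that the $\ell^1$-norm of the coefficient vector of a virtual character is a lower bound on the total dimension of any pair of honest representations whose difference realizes it.
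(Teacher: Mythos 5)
Your proof is correct and follows essentially the same route as the paper: both read the multiplicities of the $G$-isotypic decomposition of $\mathcal{H}^{\pm}$ off the representative polynomial $p(x)$ from Theorem~\ref{T:10.1} and bound dimensions by coefficient sums. The only cosmetic difference is that you bound $\dim\mathcal{H}$ by the full $\ell^1$-norm $\sum_k|c_k|=20$ in one step, whereas the paper bounds $\dim\mathcal{H}^+$ by the sum of the positive coefficients (which is $10$) and then doubles; your coefficient list and the total $20$ check out.
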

\begin{proof} If $\dim {\mathcal H}^+$ were less than $10$, 
then ${\rm Spin}(G, M)$ would be represented by a polynomial in $\mathbb{Z}[x]$ 
of degree at most 14 whose positive coefficients sum to less than 10, which is not the case by Theorem \ref{T:10.1}. 
\end{proof}

Our computations are consistent with $\dim {\mathcal H} = 20$. 
We end the paper with the following intriguing question. 

\subsection*{Question}  What is the dimension of the space ${\mathcal H}$ of harmonic spinors on the Davis hyperbolic 
4-manifold $M$?
\vspace{.15in}


\end{document}